\documentclass[12pt]{amsart}
\usepackage{amsmath}
\usepackage{amssymb}
\usepackage[mathcal]{eucal}
\usepackage[all]{xy}
\usepackage{latexsym}
\usepackage{amstext}
\usepackage{amsfonts}
\usepackage{amsthm}
\usepackage{amsopn}
\usepackage{amsbsy}
\usepackage{layout}
\usepackage{color}
\usepackage{graphicx}
\usepackage{bm}
\usepackage{yfonts}
\usepackage{pifont}
\usepackage[scr]{rsfso}
\usepackage{tikz}
\newtheorem{thm}{Theorem}[section]
\newtheorem{lem}[thm]{Lemma}

\newtheorem{cor}[thm]{Corollary}
\newtheorem{pro}[thm]{Proposition}
\newtheorem{ex}[thm]{Example}
\newtheorem{que}[thm]{Question}
\newtheorem{prob}[thm]{Problem}

\newtheorem{df}[thm]{Definition}
\newtheorem{rem}[thm]{Remark}
\newtheorem{cl}[thm]{Claim}

\newtheorem{claim}{Claim}

\newcommand{\Int}{\mbox{{\rm Int}}}

\newcommand*\circled[1]{\tikz[baseline=(char.base)]{
            \node[shape=circle,draw,inner sep=1pt] (char) {#1};}}

\def\J{\mathbb J}

\def\N{\mathbb N}
\def\R{\mathbb R}

\def\supp{{\mathrm {supp}}\,}
\def\cl{\mathop{\it cl}}
\makeatletter
\def\Int{\mathop{\operator@font Int}\nolimits}
\makeatother

\begin{document}

\title[On densely defined linear continuous operators between function spaces]
{On densely defined linear continuous operators between function spaces}

\author{A. Leiderman and V. Valov}
\address{Department of Mathematics, Ben-Gurion University of the Negev,
Beer-Sheva, Israel}
\email{arkady@math.bgu.ac.il}

\address{Department of Computer Science and Mathematics, Nipissing University,
100 College Drive, P.O. Box 5002, North Bay, ON, P1B 8L7, Canada}
\email{veskov@nipissingu.ca}
\thanks{The second author was partially supported by NSERC Grant RGPIN-2025-07173.}

\keywords{$C_p(X)$-space, linear continuous operator, zero-dimensional space, strongly countable-dimensional space, scattered space}
\subjclass[2010]{Primary 54C35; Secondary 54F45}


\begin{abstract}
For any Tychonoff space $X$, let $D(X)$ denote either the space $C(X)$ of all continuous real-valued functions on $X$ or the space $C^*(X)$ of all bounded continuous real-valued functions on $X$. 
We write $D_p(X)$ when $D(X)$ is endowed with the topology of pointwise convergence.

In our recently published paper \cite[Theorem 1.6]{elv}, we obtained the following result: if $T: D_{p}(X) \to D_{p}(Y)$ is a linear continuous surjection, where $X$ is a metrizable space and $Y$ is a perfectly normal space, then $Y$ inherits a given topological property $\mathcal{P}$ from $X$. 

A linear continuous surjection $T: E_{p}(X)\to E_{p}(Y)$ is said to be densely defined if $E(X)$ and $E(Y)$ are dense linear subspaces of $D_{p}(X)$ and $D_{p}(Y)$, respectively, and $E(X)$ is correct
 (see Definition \ref{df-correct}(a)).
 In the present paper, we establish sufficient conditions under which the above statement remains valid for a densely defined linear continuous surjection $T: E_{p}(X) \to E_{p}(Y)$.
In particular, $\mathcal{P}$ can be zero-dimensionality, $\sigma$-compactness or strong countable-dimensionality. 

Additionally, for arbitrary Tychonoff spaces $X$ and $Y$, assuming that $T: E_p(X)\to E_p(Y)$ is a densely defined linear continuous operator,
 we show that $X\in\mathcal P$ implies $Y\in\mathcal P$, where  $\mathcal P$ is the property $(\kappa)$, the strong $\sigma$-scatteredness,
or the property of being a $\Delta_1$-space.
\end{abstract}

\maketitle\markboth{A. Leiderman, V. Valov}{Densely defined linear continuous operators}




\section{Introduction and Main Results}\label{intro}
For a Tychonoff space $X$,
 we denote by $C(X)$ the linear space of all continuous real-valued functions on $X$.
$C^*(X)$ is the subspace of $C(X)$ consisting of the bounded functions.
We write $C_p(X)$ (resp., $C_p^*(X)$) if $C(X)$ (resp., $C^*(X)$) is endowed with the pointwise convergence topology.
A fundamental problem that has been intensively studied by many authors is the following:

\begin{prob} \label{pr0}
Which topological properties of the space $X$ are preserved by linear continuous (uniformly continuous) mappings / linear homeomorphisms (uniform homeomorphisms)
of spaces $C_p(X)$ or $C_p^*(X)$?
\end{prob}

Let us briefly mention a few crucial results.
\begin{itemize}
\item Let $T: C_p(X) \to C_p(Y)$ be a linear (even uniform, see \cite{gu}) homeomorphism. Then $\dim(X) = \dim(Y)$ (see \cite{p}).
\item For every finite-dimensional metrizable compact space $X$ there exists a linear continuous surjection $T: C_p([0,1]) \to C_p(X)$ (see \cite{lmp}).
\item Let $T: C_p(X) \to C_p(Y)$ (or $T: C_p^*(X) \to C_p^*(Y)$) be a linear continuous surjection. If $\dim(X)=0$ then $\dim(Y)=0$ (see \cite{ev}, and \cite{KawL} for compact $X$, $Y$).
\item Let $T: C_p(X) \to C_p(Y)$ be a linear continuous (even uniformly continuous) surjection. If $X$ is a metrizable compact space, then so is $Y$ (implicitly obtained in \cite{Usp}).
\item Metrizability is not preserved by linear homeomorphisms of both spaces $C_p(X)$ and $C_p^*(X)$ (apparently folklore, see \cite[Problem 263]{tk2}).   
\end{itemize}

More information can be found in 
\cite{ar1}, \cite{mar}, \cite{vanMill}, \cite{tk}, \cite{tk2}.

Everywhere below, we denote by $D(X)$ either $C^*(X)$ or $C(X)$, and $D_p(X)$ stands for $D(X)$ endowed with the pointwise convergence topology.
Let $E_p(X)$ and $E_p(Y)$ be dense linear subspaces of $D_p(X)$ and $D_p(Y)$, respectively. In this paper, our main interest is the following question.

\begin{que} \label{question}
Let $T: E_{p}(X)\to E_{p}(Y)$ be a linear continuous surjection. Under what conditions does $Y$ inherit a given topological property 
$\mathcal{P}$ from $X$?
\end{que}

A particular known positive result is the following.

\begin{pro} \label{prop_0} \cite[Proposition 2.1]{KawL} Let $X$ and $Y$ be compact metrizable spaces and
let $T: E_{p}(X)\to E_{p}(Y)$ be a linear continuous surjection, where $E_p(X)$ and $E_p(Y)$ are dense linear subspaces of $D_p(X)$ and $D_p(Y)$, respectively. 
If $\dim(X)=0$, then we have $\dim(Y)=0$.
\end{pro}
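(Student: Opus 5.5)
Since $T$ is linear and continuous, for each $y\in Y$ the functional $f\mapsto T(f)(y)$ on $E_p(X)$ is continuous in the pointwise topology. A continuous linear functional on a dense subspace of $D_p(X)\subset\mathbb R^X$ depends on finitely many coordinates. So there is a finite set $\supp(y)\subset X$ and reals $\lambda_x(y)\neq 0$, $x\in\supp(y)$, with
$$T(f)(y)=\sum_{x\in \supp(y)}\lambda_x(y)f(x)\qquad (f\in E(X)).$$
The plan is to use this support map $y\mapsto\supp(y)$, together with zero-dimensionality of the compact metrizable $X$, to separate points of $Y$ from closed sets by clopen sets. For compact metrizable $Y$ it is enough to show that $Y$ has a countable base of clopen sets, or equivalently that every point has arbitrarily small clopen neighbourhoods.

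First, surjectivity forces $\supp(y)\neq\emptyset$ for every $y$: otherwise $T(f)(y)=0$ for all $f$, yet $E(Y)$ is dense in $D_p(Y)$ and so contains functions nonzero at $y$. Next, fix a finite clopen partition $\mathcal U=\{U_1,\dots,U_k\}$ of $X$ and consider the characteristic functions $\chi_{U_i}\in C^*(X)$. These need not lie in $E(X)$, and this is the main obstacle. My first approach is to show that the functional $\phi_y$ extends uniquely to the closure in the finite-coordinate sense, namely as the evaluation formula above. That formula makes sense on all of $\mathbb R^X$, so I can define $g_i(y)=\sum_{x\in\supp(y)\cap U_i}\lambda_x(y)$. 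I then have to prove that $g_i$ is continuous on $Y$, or at least that the sets where it takes particular values are clopen. To do this I approximate $\chi_{U_i}$ pointwise on finite sets by elements of $E(X)$, using density, and then use the continuity of $T$ together with the continuity of each $T(f)$ on $Y$. Metrizability and compactness of $X$ should give lower semicontinuity of $y\mapsto\supp(y)$, in the sense that $\{y:\supp(y)\cap U\neq\emptyset\}$ is open for open $U$. I expect this to follow because surjectivity lets me pick $f$ close to $0$ outside $U$ and large at a point of $U$.

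With lower semicontinuity in hand, I would argue as in the classical proof for $C_p$. Write $Y=\bigcup_n Y_n$, where $Y_n=\{y: |\supp(y)|\le n,\ \sum|\lambda_x(y)|\le n\}$. The sets $Y_n$ are closed because the functionals on them are uniformly bounded and finite-supported, so pointwise limits stay in the same class. Then $Y_n\setminus Y_{n-1}$ carries a continuous finite-set-valued map into $[X]^{n}$, which is zero-dimensional since $X$ is. By the countable sum theorem for compact metrizable spaces, it suffices to show that each difference $Y_n\setminus Y_{n-1}$ is zero-dimensional. For that, I use the fact that on $Y_n\setminus Y_{n-1}$ the support map is continuous and, I claim, finite-to-one-to-fibers-injective locally. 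More precisely, if two nearby points $y,y'$ had the same support and coefficients, then $T(f)(y)=T(f)(y')$ for all $f$. That contradicts density of $E(Y)=T(E(X))$ in $D_p(Y)$, which separates points. Hence $y\mapsto(\supp(y),\lambda(y))$ is a continuous injection of a $\sigma$-compact piece into $[X]^{\le n}\times\mathbb R^n$, and I reduce the problem to the dimension of the coefficient part.

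The hardest step is controlling the coefficients $\lambda(y)$, which range over $\mathbb R^n$, a space that is not zero-dimensional. Following \cite{KawL}, I would try to show that on each set where the support is locally constant, the coefficient vector is locally constant too. The idea is to use functions in $E(X)$ that separate the finitely many support points, which density provides, and to compare against the continuity of $T(f)$. If instead the coefficients varied continuously, $T(E(X))$ would lie in a finite-dimensional family on that piece, and that would contradict the density of $E(Y)$ in $D_p(Y)$ on a nondegenerate connected set. So a nontrivial connected subset of $Y$ would give a contradiction with density, which proves $\dim Y=0$.
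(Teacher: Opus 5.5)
Your overall architecture is the right one: finite supports, pieces with bounded support size and bounded coefficient norm, and a finite-to-one support map into $[X]^n$. It parallels Section~\ref{section3}; the paper itself only cites \cite{KawL} for this statement. However, one step you rely on is false, and the decisive step is missing.

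\textbf{The false step.} Global lower semicontinuity of $y\mapsto\supp(y)$ fails when $E(X)$ is merely dense. This is precisely what correctness buys in Proposition~\ref{pro22}(iii). Your justification cannot work, because smallness of $f$ off $U$ gives no control of $\sum_{x\in\supp(z)}\lambda_x(z)f(x)$ when the $\lambda_x(z)$ are unbounded. For a counterexample, take:
\begin{itemize}
\item[] $X\subset[0,1]$ the Cantor set, $a\neq b$ in $X$, and $a_k\to a$ in $X\setminus\{a,b\}$;
\item[] $E(X)=\{f|_X: f\in C^1[0,1],\ f'(a)=f(b)\}$;
\item[] $Y=\{y_k\}_k\cup\{y_\infty\}$ a convergent sequence, with $T(f)(y_k)=(f(a_k)-f(a))/(a_k-a)$ and $T(f)(y_\infty)=f(b)$.
\end{itemize}
Hermite interpolation shows that $E(X)$ and $T(E(X))$ are dense. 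Each $T(f)$ is continuous on $Y$ because $f'(a)=f(b)$. Yet $\supp(y_\infty)=\{b\}$, while $\supp(y_k)=\{a,a_k\}$ misses a clopen neighbourhood $U$ of $b$ for large $k$. The analogue of your $g_i$ for $U$ is also discontinuous at $y_\infty$.

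What survives is local. Use two indices and set $Y_{n,m}=\{y:|\supp(y)|\le n,\ \sum_x|\lambda_x(y)|\le m\}$. This set is closed by your limit-measure argument, which uses compactness of $X$ and the uniqueness, by density, of the continuous extension of $f\mapsto T(f)(y)$ to $C_p(X)$. The same argument shows that $S(y)=\supp(y)$ is continuous from $A_{n,m}=Y_{n,m}\setminus Y_{n-1,m}$ into $[X]^n$. Your single-index $Y_n\setminus Y_{n-1}$ does not map into $[X]^n$, since it contains points with fewer than $n$ support points.

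\textbf{The missing step.} The reduction to $[X]^{\le n}\times\mathbb R^n$, via injectivity of $y\mapsto(\supp(y),\lambda(y))$, is a dead end: the target is not zero-dimensional. Controlling the coefficients is not the issue. What you need is that $S$ alone is finite-to-one. If $n+1$ distinct points $y_i$ have the same support $K$, then $f\mapsto(T(f)(y_i))_{i}$ factors through $f|_K\in\mathbb R^K$. Its image in $\mathbb R^{n+1}$ then has dimension $\le n$, contradicting density of $E(Y)$. This is Lemma~\ref{lem23}, whose proof needs only density once supports are taken from the extensions to $C_p(X)$.

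With this in hand the argument closes as follows.
\begin{itemize}
\item[] Since $A_{n,m}$ is open in the compactum $Y_{n,m}$, it is a countable union of compacta $Z$.
\item[] On each $Z$ the map $S$ is closed, with fibers of size $\le n$, into the zero-dimensional space $[X]^n$. Hence $\dim Z=0$ by the dimension-lowering theorem \cite[Theorem 3.3.10]{en}. Equivalently, every connected subset of $Z$ maps to a point, hence is finite, so $Z$ is a totally disconnected compactum.
\item[] The countable sum theorem then gives $\dim Y=0$.
\end{itemize}
Compactness of $Y$ is what makes $S|_Z$ closed. Compactness of $X$ is what makes $S$ continuous on $A_{n,m}$, and that is exactly what breaks in Example~\ref{ex-0}.

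Your closing sentence about a nontrivial connected subset of $Y$ skips this localization. A connected subset of $Y$ need not lie in a single $A_{n,m}$, and $S$ is continuous only on the individual pieces.
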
 

The compactness assumption cannot be dropped in the above statement. The following example is a refinement of \cite[Example 2.2]{KawL}.

\begin{ex} \label{ex-0}
{\em Denote by $\J$ the zero-dimensional space of irrationals. Let $Y$ be any nonempty Polish space.
There exist a closed subset $X\subset \J$ and a continuous bijective mapping $\varphi$ from $X$ onto $Y$.
Consider the mapping $h$, dual to $\varphi$. We have that $h: C_p(Y) \to C_p(X)$ is a homeomorphic embedding of $C_p(Y)$ onto a dense linear subspace $E(X)$ of $C_p(X)$.
One can take a compact segment $Y= [0, 1]$, but $X$ is not compact.
Similarly, the linear continuous operator $T = h^{-1}: E(X) \to C_p(Y)$ naturally maps $E(X)$ onto $C_p(Y)$ but it does not preserve the property $\dim(X) = 0$.}
\end{ex}

Thus, there is no hope that, without extra assumptions, one can get a meaningful positive answer to Question \ref{question}.\\
\\
\begin{df}\label{df-correct}Let $E(X)$ be a linear subspace of $D(X)$.\\
\begin{enumerate}
\item[{\rm (a)}] We say that $E(X)$ is {\em correct} if for every $x\in X$ and its neighborhood $U$ there is $f\in E(X)$ with $f(x)\neq 0$ and $f(X\backslash U)=0$. 
\item[{\rm (b)}] A subset $E(X)$ is said to be {\em $\omega$-dense} (resp., {\em $\omega^*$-dense}) in $D(X)$ if for every countable set $K\subset X$, $f\in C(X)$ (resp., $f\in C^*(X)$) 
and $\varepsilon>0$ the set 
$U(f,K,\varepsilon)=\{g\in D(X): |f(x)-g(x)|<\varepsilon{~}\mbox{for all}{~}x\in K\}$\\
 meets $E(X)$. 
\end{enumerate}
\end{df}

\begin{ex}\label{ex-1}
{\em
\begin{enumerate}
\item[{\rm (a)}] Let $X$ be any space with a countable base $\{U_n\}$. For each pair $U_n, U_m$ such that $\cl(U_m) \subset U_n$ fix a continuous function $f_{n,m}: X \to [0,1]$ such that $f_{n,m}(X\backslash U_n)=0$ and $f_{n,m}(\cl(U_m))=1$. The linear span of all functions $f_{n,m}$ is a correct countable-dimensional linear subspace of $C_p^*(X)$. 
\item[{\rm (b)}] Obviously, every $\omega$-dense subset of $D(X)$ is dense in $C_p(X)$ and every $\omega^*$-dense subset of $C^*(X)$ is dense in $C_p^*(X)$. Every uniformly dense subspace of $C_p(X)$ ($C_p^*(X)$) is an $\omega$-dense subset of $C_p(X)$ (resp. $C_p^*(X)$). For instance, the linear subspace of piecewise linear functions in $C[a,b]$ is $\omega$-dense.
\end{enumerate}
}
\end{ex}
Everywhere below we say that a linear continuous surjection $T:E_p(X)\to E_p(Y)$ is {\em densely defined}
 if the domain $E(X)$ of $T$ is a correct linear subspace of $D(X)$ and the range $E(Y)$ is dense in $D_p(Y)$.

In the present paper we mainly focus on densely defined linear continuous operators $T: D_p(X)\to D_p(Y)$, where $X$ is a metrizable space and $Y$ is a perfectly normal space. 
Moreover, we consider all four possible cases when $D(X)$ is either $C(X)$ or $C^*(X)$ and $D(Y)$ is either $C(Y)$ or $C^*(Y)$.

Recall that a normal topological space $X$ is called {\em countable-dimensional} ({\em strongly countable-dimensional})
if $X$ can be represented as a countable union of normal finite-dimensional subspaces (resp., closed finite-dimensional subspaces).

A normal space is called a $C$-space if for every sequence $\{\mathcal G_i\}$ of open covers of $X$ there is a sequence $\{\mathcal H_i\}$ of families of pairwise disjoint open subsets of $X$ such that each member of $\mathcal H_i$ is contained in a member of $\mathcal G_i$ and $\bigcup_{i=1}^\infty\mathcal H_i$ is a cover of $X$. 
 
A normal space $X$ is called {\em weakly infinite-dimensional} if for every sequence $\{(A_n,B_n)\}_{n\geq 1}$ of pairs of disjoint closed subsets of $X$ there exist closed sets $L_n$, $n\geq 1$, such that $L_n$ is a partition between $A_n$ and $B_n$ and $\bigcap_{n=1}^\infty L_n=\varnothing$. 

It is well known that, in the class of metrizable spaces, every strongly countable-dimensional space is 
countable-dimensional, countable-dimensional spaces have property $C$ and $C$-spaces are weakly infinite-dimensional, but the reverse inclusions are not true, see \cite{en}. 

We consider properties $\mathcal P$ of metrizable spaces such that:
\begin{itemize}
\item [(a)] If $X\in\mathcal P$ and $F\subset X$ is closed, then $F\in\mathcal P$;
\item [(b)] If $X$ is a union of a $\sigma$-locally finite family $\mathcal A$ of closed sets in $X$ such that $A\in\mathcal P$ for all $A\in\mathcal A$, then $X\in\mathcal P$;
\item [(c)] If $f:X\to Y$ is a closed map and there is number $k$ such that $|f^{-1}(y)|\leq k$ for all $y\in Y$, then  $Y\in\mathcal P$ implies $X\in\mathcal P$.
\end{itemize}
Observe that condition $(b)$ is equivalent to the following two properties: $(b1)$ (The Countable Sum Theorem) If $X$ is a countable union of closed subsets each having the property $\mathcal P$,
 then $X\in\mathcal P$; $(b2)$ (The Locally Finite Sum Theorem) If $X$ is a union of a locally finite family of closed sets each having the property $\mathcal P$, then $X\in\mathcal P$.

Throughout the paper by dimension we mean the {\em covering dimension $\dim$}.
Classical results of dimension theory imply that {\em zero-dimensionality}, {\em countable-dimensionality},  {\em strong countable-dimensionality} and {\em weak infinite-dimensionality} satisfy conditions
 $(a) - (c)$ above, see \cite{en}. Moreover, {\em $\sigma$-compactness} and $C$-space property also satisfy these conditions. This is obvious for sigma-compactness. For property $C$, condition $(a)$ is also obvious, condition $(b1)$ was established in \cite{gv},  
condition $(b2)$ was established in \cite{ag} and condition $(c)$ follows from \cite{hy}.

We now formulate one of our main results.
Note that the particular case of Theorem \ref{theorem-main}, when $T:C_p(X)\to C_p(Y)$ is a linear continuous surjection, and both spaces $X$ and $Y$ are metrizable, 
was established in \cite[Theorem 1.6]{elv}, see Appendix.

\begin{thm}\label{theorem-main} Let $X$ and $Y$ be metrizable spaces and $\mathcal P$ be a topological property satisfying conditions 
$(a) - (c)$. Suppose that
$T:E_p(X)\to E_p(Y)$ is a densely defined linear continuous operator such that $E(Y)$ is $\omega^*$-dense in $D(Y)$.
 If all finite powers $X^k$, $k\in\mathbb N$, have the property $\mathcal P$ then all finite powers of $Y$ also have the property $\mathcal P$.
 \end{thm}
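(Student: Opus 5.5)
We will follow the support-map strategy of \cite[Theorem 1.6]{elv}. For $y\in Y$, the functional $f\mapsto T(f)(y)$ is linear and continuous on $E_p(X)$. A continuous linear functional on a linear subspace of $\mathbb R^X$ with the pointwise topology depends only on finitely many coordinates, so there is a finite set $\supp(y)\subset X$ and reals $\lambda_x(y)$ with $T(f)(y)=\sum_{x\in\supp(y)}\lambda_x(y)f(x)$. Because $E(X)$ is correct, this set and the coefficients can be chosen canonically and minimally. Concretely, $x\in\supp(y)$ iff for every neighborhood $U$ of $x$ there is $f\in E(X)$ with $f(X\setminus U)=0$ and $T(f)(y)\neq 0$. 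Correctness is exactly what lets us separate points of $X$ by elements of $E(X)$ vanishing off small neighborhoods, as in the case $E(X)=D(X)$. We then set
\[
Y_{n,k}=\{y\in Y:\ |\supp(y)|\le k,\ \textstyle\sum_{x\in\supp(y)}|\lambda_x(y)|\le n\}.
\]
Continuity of $y\mapsto T(f)(y)$ gives the standard facts: $y\mapsto\supp(y)$ is lower semicontinuous, and each $Y_{n,k}$ is closed in $Y$. The only requirement is that there are enough $f\in E(X)$, which correctness again provides.

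Next we need $Y=\bigcup_{n,k}Y_{n,k}$, i.e. $\supp(y)\neq\varnothing$ for every $y$. Here $\omega^*$-density of $E(Y)$ enters. If $\supp(y)=\varnothing$, then $T(f)(y)=0$ for all $f$, so $E(Y)\subset\{g:g(y)=0\}$. This contradicts density, since $E(Y)$ must approximate the constant function $1$ at the countable set $\{y\}$.

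The main obstacle is using $\omega^*$-density, rather than surjectivity of $T$ onto all of $D_p(Y)$, to get the finite-to-one, closed-map structure on strata. In \cite{elv} surjectivity onto $C_p(Y)$ gives the following: for disjoint closed $A,B\subset Y$ with $y\in A$, some $g$ separates them, and then a preimage $f$ forces the supports to vary "continuously". We would imitate this with countable data, because $Y$ is metrizable and all the relevant sets are separable along sequences. Suppose a sequence $y_m\to y$ in $Y_{n,k}$ has $\supp(y_m)$ escaping every neighborhood of $\supp(y)$. Choose $g\in C^*(Y)$ that is $0$ at $y$ and $1$ at the $y_m$. By $\omega^*$-density pick $h=T(f)\in E(Y)$ approximating $g$ on the countable set $\{y\}\cup\{y_m\}$. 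Then modify $f$ by correctness so that it is supported near $\supp(y)$, keeping the coefficients bounded by $n$. This should give a contradiction with continuity of $T(f)$ at $y$. From this we expect the following. For each $k$, the set $Z_k=\{y\in Y_{n,k}:|\supp(y)|=k\}$ can be written, via a $\sigma$-locally finite closed cover of $Y_{n,k}\setminus Y_{n,k-1}$ (metrizability, Stone's theorem), as a union of pieces $F$. On each piece the map $y\mapsto\supp(y)$ into $[X]^k$ is a closed embedding-like map, or at least a map $F\to X^k$ whose image is closed and on which the associated map back has fibers of size at most $k!$.

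Finally we apply the axioms. The image of $F$ sits as a closed subset of $X^k$ (after ordering points locally, which the $\sigma$-locally finite refinement permits), so it has $\mathcal P$ by (a). Then $F$ has $\mathcal P$ by (c), using the closed map with fibers bounded by $k!$. Each $Y_{n,k}$ then has $\mathcal P$ by (b2)/(b1), and $Y$ has $\mathcal P$ by (b1). For the powers $Y^m$, we run the same argument with the operator $T^{(m)}$ induced on $E(X\oplus\cdots\oplus X)$, or directly with supports $\supp(y_1)\cup\dots\cup\supp(y_m)\subset X$, giving pieces of $Y^m$ mapped into $X^{km}$. Since all finite powers of $X$ have $\mathcal P$, every finite power of $Y$ does.
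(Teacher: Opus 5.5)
Your outline (canonical finite supports, stratifying $Y$ by support size and coefficient bound, support maps into $[X]^k$, then (a)--(c) and the countable sum theorem) matches the paper's. However, condition (c) needs two properties of the support map $S\colon y\mapsto\supp(y)$, and you establish neither.

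\textbf{Closedness of $S$.} The decisive property is that $S$ is \emph{closed} on each stratum. This is the only place where $\omega^*$-density is used; mere density is enough for $\supp(y)\neq\varnothing$. Your paragraph on this targets the wrong property and cannot work as written. If $y_m\to y$, there is no continuous $g$ with $g(y)=0$ and $g(y_m)=1$ for all $m$. Moreover, convergence of supports along convergent sequences is continuity of $S$, not closedness. On the strata where $|\supp(y)|=k$ exactly, it already follows from lower semicontinuity: each of $k$ disjoint neighbourhoods of the points of $\supp(y)$ must meet the $k$-point set $\supp(z)$ for $z$ near $y$.

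What (c) needs is the converse: if $S(y_m)\to P$ in $[X]^k$, then $\{y_m\}$ has an accumulation point in the stratum. The paper proves this by contradiction.
\begin{enumerate}
\item Suppose $A=\{y_m\}$ is closed and discrete, and take an uncountable almost disjoint family $\mathcal B$ of infinite subsets of $\mathbb N$.
\item For $B\in\mathcal B$, the function equal to $n+1$ on $\{y_m:m\in B\}$ and $0$ on the rest of $A$ extends to some $g'_B\in C^*(Y)$. By $\omega^*$-density there is $g_B=T(f_B)\in E(Y)$ within $1/3$ of $g'_B$ on $A$.
\item Separability of $\mathbb R^k$ yields $B\neq B'$ with $|f_B-f_{B'}|<1$ at the $k$ points of $P$, hence on $S(y_m)$ for $m\ge m_0$.
\item The bound $\sum_x|\lambda_x(y_m)|\le n$ then forces $|g_B(y_m)-g_{B'}(y_m)|\le n$ for $m\ge m_0$. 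This contradicts the values at the infinitely many $m\in B\setminus B'$, where the difference exceeds $n+1/3$.
\end{enumerate}
Nothing in your sketch replaces this argument, and without it (c) cannot be applied.

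\textbf{Finite fibers of $S$.} This is not automatic, and $S$ need not be injective: take two points with $T(f)(y_1)=f(x_1)+f(x_2)$ and $T(f)(y_2)=f(x_1)-f(x_2)$. Your bound $k!$ counts orderings of a $k$-point set, i.e.\ fibers of $X^k\to[X]^k$, not fibers of $S$. The paper uses Lemma~\ref{lem23}: for finite $K\subset X$, at most $|K|$ points $y$ satisfy $\supp(y)\subset K$. Otherwise the at most $|K|$-dimensional space of restrictions of $E(X)$ to $K$ would map linearly onto a dense, hence full, subspace of $\mathbb R^{|K|+1}$.

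\textbf{Two lesser points.}
\begin{enumerate}
\item Closedness of your $Y_{n,k}$ in $Y$ does not follow from continuity of $T(f)$ alone. $E(X)$ is not closed under truncation, so you cannot test with functions bounded by $1$ on all of $X$. The paper only shows that $\{a\le p\}$ is closed in the exact-size stratum $M_k$, where supports lie in small sets on which the test function is bounded by $1$. It then uses perfect normality of $Y$.
\item For $Y^m$, the operator on $E(X\oplus\cdots\oplus X)$ you suggest is not densely defined: the natural candidate has range $\{\sum_i T(f_i)(y_i)\}$, which is not dense in $C_p(Y^m)$. The paper instead uses that a finite product of perfect maps with fibers of size $\le k_i$ is perfect with fibers of size $\le k_1\cdots k_m$. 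It lands in $\prod_i[X]^{k_i}$, which has $\mathcal P$ by Lemma~\ref{lem28}.
\end{enumerate}
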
 

An essential fact for our purposes is the following:  If $\mathcal P$ is either zero-dimensionality, strong countable-dimensionality or local compactness 
and $X$ is a metrizable space with $X\in\mathcal P$, then $X^k\in\mathcal P$ for all $k\in\mathbb N$. 

The particular case of Corollary \ref{cor-main} below, when $T:C_p(X)\to C_p(Y)$ is a linear continuous surjection, was established in \cite[Theorem 5.3]{Pelant}. 

\begin{cor}\label{cor-main}
Let $X$ and $Y$ be metrizable spaces and $T:E_p(X)\to E_p(Y)$ be a densely defined linear continuous operator. 
\begin{itemize}
\item[(i)] If $E(Y)$ is $\omega^*$-dense in $D(Y)$ and $X$ is $\sigma$-compact, then $Y$ is also $\sigma$-compact;
\item[(ii)] If $E(Y)$ is $\omega$-dense in $C(Y)$ and $X$ is compact, then $Y$ is compact.
\end{itemize}
\end{cor}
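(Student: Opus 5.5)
Part (i) will follow directly from Theorem \ref{theorem-main}, and part (ii) needs an extra argument that uses $\omega$-density.

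For (i), take $\mathcal P$ to be $\sigma$-compactness in the class of metrizable spaces. As noted before Theorem \ref{theorem-main}, this property satisfies conditions $(a)$--$(c)$:
\begin{itemize}
\item[$(a)$] closed subsets of $\sigma$-compact spaces are $\sigma$-compact;
\item[$(b1)$] countable unions of $\sigma$-compact sets are $\sigma$-compact;
\item[$(b2)$] a locally finite family of nonempty closed sets in a $\sigma$-compact (hence Lindel\"of) space is countable, so $(b2)$ reduces to $(b1)$;
\item[$(c)$] if $f$ is closed with fibres of size at most $k$, then $f$ is perfect, and preimages of compacta under perfect maps are compact.
\end{itemize}
Finite powers of a $\sigma$-compact space are $\sigma$-compact. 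Hence all $X^k$ have $\mathcal P$, Theorem \ref{theorem-main} gives $Y^k\in\mathcal P$ for all $k$, and in particular $Y$ is $\sigma$-compact.

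For (ii), first note that $\omega$-density in $C(Y)$ implies $\omega^*$-density. Given a bounded $f$, a countable $K$ and $\varepsilon>0$, the set $U(f,K,\varepsilon)$ is the same neighbourhood viewed with $f\in C(Y)$, so it meets $E(Y)$. Also, if $D(Y)=C^*(Y)$ then every function in play is bounded anyway. So (i) applies and $Y$ is $\sigma$-compact.

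To get compactness, I argue by contradiction. Since $Y$ is metrizable, if $Y$ is not compact it contains a closed discrete sequence $\{y_n\}$. Choose pairwise disjoint open neighbourhoods $V_n\ni y_n$ forming a discrete family, and an unbounded $f\in C(Y)$ with $f(y_n)=n\cdot c_n$ for constants $c_n$ to be fixed below. By $\omega$-density with $K=\{y_n\}$ there is $g\in E(Y)$ with $|g(y_n)-f(y_n)|<1$, so $g$ is unbounded on $K$ in a controlled way.

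Now use surjectivity and continuity of $T$ via the standard support construction. For each $y$, the functional $\delta_y\circ T$ is continuous on $E_p(X)$. It therefore has the form $\sum_{x\in \supp(y)}\lambda_x(y)\,h(x)$, where $\supp(y)$ is a finite subset of $X$; this uses correctness of $E(X)$ together with linearity, which gives the finite-support representation. Compactness of $X$ will then give a uniform bound of the following kind. Pick $h_n\in E(X)$ with $T h_n$ large at $y_n$. The supports $\supp(y_n)$ accumulate in the compact space $X$, and continuity of $T$ at an accumulation point forces the values $T h(y_n)$ to be controlled by finitely many coordinates of $h$.

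The \textbf{main obstacle} is extracting a contradiction from this. My plan is to pick $y_n$ and $x_n\in\supp(y_n)$ converging to some $x_0\in X$. I then want to use $\omega$-density to realise, as $g=Th$ with $h\in E(X)$, a function whose values at $y_n$ grow faster than the coefficients allow. Alternatively, I would try the known argument of \cite[Theorem 5.3]{Pelant} (or \cite{elv}) for full $C_p$, and check that it uses only the values of functions on countably many points, where $\omega$-density substitutes for the whole of $C(Y)$.
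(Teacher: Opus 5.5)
\textbf{Part (ii) is unfinished; this is a genuine gap.} You call the contradiction the \emph{main obstacle} and never derive it. Your second plan is the right one: use $\omega$-density to realise values at $y_n$ that outgrow the coefficients. The accumulation point $x_0$ and continuity of $T$ at it play no role, though. Pointwise continuity only says that each single $l_y$ is a finite combination of point evaluations (Proposition \ref{pro22}(ii)). The coefficients $\lambda_i(y_n)$ may blow up with $n$, and nothing about $x_0$ controls them.

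What is missing is a short estimate in which compactness of $X$ enters only through boundedness of one fixed function. This is the paper's Proposition \ref{pro25} with $K=X$, i.e.\ Corollary \ref{cor26}:
\begin{itemize}
\item Write $l_{y_n}(h)=\sum_i\lambda_i(y_n)h(x_i(y_n))$ and put $c_n=\max\{1,\sum_i|\lambda_i(y_n)|\}$.
\item Take $g\in C(Y)$ with $g(y_n)=n c_n$; this is possible because $\{y_n\}$ is closed discrete, hence $C$-embedded.
\item By $\omega$-density and surjectivity of $T$, get $h\in E(X)$ with $|T(h)(y_n)-nc_n|<1$ for all $n$.
\item Since $X$ is compact, $|h|\le c$ on $X$ for some $c$. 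Then $nc_n-1<|T(h)(y_n)|\le c\sum_i|\lambda_i(y_n)|\le c\,c_n$, so $(n-c)c_n<1$.
\item This fails as soon as $n>c+1$, because $c_n\ge 1$.
\end{itemize}
Hence $Y$ has no infinite closed discrete subset, and being metrizable it is compact. Your preliminary appeal to (i) and $\sigma$-compactness is therefore redundant. The paper also takes that detour, but pseudocompactness alone already gives compactness for metrizable $Y$.

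\textbf{Part (i)} follows the paper's route: plug $\sigma$-compactness into Theorem \ref{theorem-main}. Your verification of $(b2)$, however, is circular. It uses that the union is Lindel\"of, which is exactly what has to be shown. In fact $(b2)$ fails for $\sigma$-compactness: an uncountable discrete space is the union of the locally finite family of its singletons. The paper asserts $(a)$--$(c)$ for $\sigma$-compactness as obvious, so it has the same defect.

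The conclusion of (i) still holds. Beyond the countable sum theorem $(b1)$, the proof of Theorem \ref{theorem-main} uses $(b)$ only in Lemma \ref{lem28}. There it is applied to families built from a $\sigma$-locally finite base of $X$. Since $X$ is $\sigma$-compact, hence Lindel\"of, those families are countable, so $(b1)$ suffices.
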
 

In view of Example \ref{ex-0}, Theorem \ref{theorem-main} is no longer true for arbitrary dense linear subspaces $E(X)$ of $C_p(X)$ even if $T:E_p(X)\to C_p(Y)$ is a linear surjection.
An alternative explanation for why the assumption that $E(X)$ is a correct subspace of $D(X)$ is crucial to our work can be found in Remark \ref{r_key}.
Example \ref{ex-0} also shows that in Corollary \ref{cor-main} the assumptions that $E(Y)$ is $\omega^*$-dense or $\omega$-dense in $D(Y)$ and $C(Y)$, respectively, are essential. 

\begin{cor} \label{cor2}
Let $X$ and $Y$ be  metrizable spaces and let $T:E_p(X)\to E_p(Y)$ be a densely defined linear continuous operator such that $E(Y)$ is
 $\omega^*$-dense in $D_p(Y)$. 
If all finite powers of $X$ are weakly infinite-dimensional or have the property $C$, then all finite powers of $Y$ have the same property. 
\end{cor}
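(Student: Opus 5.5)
Corollary \ref{cor2} follows directly from Theorem \ref{theorem-main}. The only work is to check that weak infinite-dimensionality and property $C$ satisfy conditions $(a)$--$(c)$ within the class of metrizable spaces. Once this is done, we apply Theorem \ref{theorem-main} with $\mathcal P$ equal to the relevant property. The hypotheses match exactly: $X$ and $Y$ are metrizable, $T$ is densely defined, $E(Y)$ is $\omega^*$-dense, and every $X^k$ has $\mathcal P$. Hence every $Y^k$ has $\mathcal P$.

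For weak infinite-dimensionality, conditions $(a)$--$(c)$ are the classical facts from \cite{en} already recalled in the introduction. Condition $(a)$ holds because closed subspaces of weakly infinite-dimensional spaces are weakly infinite-dimensional. For $(b)$, the countable sum theorem $(b1)$ holds for closed subsets of normal spaces, and the locally finite sum theorem $(b2)$ holds in metrizable spaces. For $(c)$, a closed map with fibers of cardinality at most $k$ is a perfect finite-to-one map, and such maps preserve weak infinite-dimensionality backwards: if $Y$ is weakly infinite-dimensional, so is $X$.

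For property $C$, the introduction already records the needed facts. Condition $(a)$ is obvious. Condition $(b1)$ is the countable sum theorem from \cite{gv}. Condition $(b2)$ is the locally finite sum theorem from \cite{ag}. Condition $(c)$ follows from \cite{hy}: the preimage of a $C$-space under a closed map with uniformly bounded finite fibers is a $C$-space.

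There is no real obstacle here; the corollary is a restatement of Theorem \ref{theorem-main} for two specific properties. The one point needing care is direction in $(c)$: we must use preservation under preimages of closed boundedly finite-to-one maps, not under images. I would double-check that the cited results in \cite{en} and \cite{hy} are stated in that direction for metrizable spaces.
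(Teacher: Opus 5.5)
Your proposal is correct and matches the paper in substance. Both proofs reduce the corollary to Theorem \ref{theorem-main}, using the fact asserted in the introduction that weak infinite-dimensionality and property $C$ satisfy $(a)$--$(c)$. The paper takes a detour: it applies Theorem \ref{theorem-main} to each closed set $Y_i=\{y\in Y:|\supp(l_y)|\le i\}$ through $\pi_i\circ T$, then recovers $Y^m$ by the countable sum theorem. That step already presupposes the same conditions $(a)$--$(c)$, so your direct application to $Y$ loses nothing.
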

Note also that if $\mathcal P$ is zero-dimensionality or strong countable-dimensionality, then conditions $(c)$ and $(b1)$ are satisfied if $X$ is normal and $Y$ metrizable, 
see \cite[Theorem 3.3.10 and Theorem 5.4.5]{en}. Then we have the following version of Theorem \ref{theorem-main}. 

\begin{thm}\label{th9}
Let $X$ be a metrizable space, let $Y$ be perfectly normal and let $\mathcal P$ be either zero-dimensionality or strong countable-dimensionality.
 If there is a densely defined linear continuous operator 
 $T: E_p(X)\to E_p(Y)$ such that $E(Y)$ is $\omega^*$-dense in $D(Y)$, then $X\in\mathcal P$ implies $Y\in\mathcal P$.
\end{thm}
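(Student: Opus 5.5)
We use the standard support technique for linear continuous maps between function spaces, adapted to correct domains. For each $y\in Y$ the functional $f\mapsto T(f)(y)$ is continuous on $E_p(X)$. It therefore extends to a functional on $\mathrm{span}$ of evaluations, and has a finite support $\supp(y)\subset X$ with $T(f)(y)=\sum_{x\in\supp(y)}\lambda_x(y)f(x)$ for $f\in E(X)$. Correctness of $E(X)$ is what makes the support well defined and uniquely determined. If $x\in\supp(y)$, then for every neighborhood $U$ of $x$ there is $f\in E(X)$ with $f(X\setminus U)=0$ and $T(f)(y)\neq 0$. Without correctness (Example \ref{ex-0}) two points could fail to be separated by $E(X)$.

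Next we stratify $Y$ by the sets $Y_{n,k}=\{y:|\supp(y)|\le k,\ \sum|\lambda_x(y)|\le n\}$. The standard arguments (lower semicontinuity of $y\mapsto\supp(y)$ and continuity of $T(f)$) show that each $Y_{n,k}$ is closed. Density of $E(Y)$ gives $\supp(y)\neq\varnothing$ for all $y$, so $Y=\bigcup_{n,k}Y_{n,k}$. To get $\omega^*$-density to force the supports to vary upper-semicontinuously on suitable pieces, we use the following observation. If $y_m\to y$ and the supports $\supp(y_m)$ escape from neighborhoods of $\supp(y)$, then we pick a countable set $K=\{y,y_m\}$ and a bounded continuous target function on $Y$. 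An element of $E(Y)$ approximating it on $K$, together with continuity of $T$ at $0$, yields a contradiction. This is the step where $\omega^*$-density replaces surjectivity onto all of $D(Y)$.

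On each difference $Z_{n,k}=Y_{n,k}\setminus Y_{n,k-1}$, the map $y\mapsto\supp(y)$ into the $k$-th symmetric power of $X$ is then continuous. Being perfectly normal, $Z_{n,k}$ is $F_\sigma$ in $Y$, hence a countable union of closed sets $F$. On each such $F$ we obtain a map $\phi:F\to [X]^{k}$. Using the coefficients $\lambda_x(y)$ and the boundedness by $n$, we show that the fibres of $\phi$ are finite, bounded in cardinality, and behave so that $\phi$ is closed onto its image. Alternatively, we decompose further into countably many closed pieces on which $\phi$ is injective into a closed subset of $X^k$ (ordering the support via a countable base of $X$). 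Since $X$ is metrizable and $X\in\mathcal P$, we have $X^k\in\mathcal P$ for zero-dimensionality and for strong countable-dimensionality. Hence each piece of $F$ maps with bounded finite fibres into a space in $\mathcal P$.

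We then invoke the facts quoted before the theorem. Condition (c) holds for $X$ normal and $Y$ metrizable, by \cite[Theorem 3.3.10 and Theorem 5.4.5]{en}: dimension does not rise under closed maps with fibres of cardinality at most $k$, that is, $\dim F\le\dim\phi(F)$. This gives each piece $\in\mathcal P$. Finally, the countable sum theorem (b1), valid in normal spaces for closed pieces, yields $Y\in\mathcal P$. The main obstacle is the closedness (or local injectivity) of $\phi$ on the pieces, since $Y$ is only perfectly normal. I would first try to get it from the bound $n$ on coefficients combined with $\omega^*$-density, via a sequential argument. Perfect normality makes $Y$ hereditarily normal, and we would work with countably many closed pieces so that only sequence-type arguments are needed.
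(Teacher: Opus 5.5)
\paragraph{Main gap.} Your outline follows the paper's architecture. You stratify $Y$ by $|\supp(l_y)|$ and by $\sum|\lambda_x(y)|$, use perfect normality to cut the exact-$k$ strata into countably many closed pieces $F$, and send each piece to $[X]^k$ by $\phi(y)=\supp(l_y)$. You then finish with the dimension-lowering theorem for closed finite-to-one maps and the countable sum theorem. However, the step that carries the whole proof is that $\phi:F\to[X]^k$ is a closed map. This is exactly what you leave open (``I would first try to get it\ldots''), and it is the only place where $\omega^*$-density is needed.

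You spend $\omega^*$-density instead on an ``upper semicontinuity'' observation, which is unnecessary and, as sketched, is not an argument. On a piece where every support has exactly $k$ points, continuity of $\phi$ follows directly from Proposition~\ref{pro22}(iii): nearby supports meet each of $k$ disjoint neighbourhoods of the points of $\supp(l_y)$, so they have exactly one point in each.

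Without closedness, the finite-to-one map gives nothing, because continuous injections can lower dimension (Erd\H{o}s space injects continuously into $\mathbb{Q}^{\omega}$). So your alternative of pieces on which $\phi$ is injective does not help either. Also, $X$ need not have a countable base.

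\paragraph{The missing argument.} What is missing is Claim~\ref{claim3} in the proof of Theorem~\ref{theorem-main}. Since $[X]^k$ is metrizable, it suffices to rule out a sequence $y_m\in F$ with no cluster point while $\supp(l_{y_m})\to\{x_1,\ldots,x_k\}$.
\begin{enumerate}
\item Such $A=\{y_m\}$ is closed discrete, hence $C^*$-embedded in the normal space $Y$.
\item Take an uncountable almost disjoint family $\mathcal B$ of infinite subsets of $\mathbb N$. For each $B\in\mathcal B$ take $g'_B\in C^*(Y)$ equal to $n+1$ at $y_m$ for $m\in B$ and $0$ at the other $y_m$.
\item Use $\omega^*$-density to get $g_B\in E(Y)$ within $1/3$ of $g'_B$ on the countable set $A$, and pick $f_B\in E(X)$ with $T(f_B)=g_B$.
\item Separability of $\mathbb R^k$ gives $B_1\neq B_2$ with $|f_{B_1}(x_i)-f_{B_2}(x_i)|<1$ for all $i$. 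By continuity, $|f_{B_1}-f_{B_2}|<1$ on $\supp(l_{y_m})$ for all large $m$.
\item Hence $|g_{B_1}(y_m)-g_{B_2}(y_m)|\le\sum_x|\lambda_x(y_m)|\le n$ for large $m$. This contradicts $|g_{B_1}(y_m)-g_{B_2}(y_m)|>n+1/3$ for the infinitely many $m\in B_1\setminus B_2$.
\end{enumerate}

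\paragraph{Smaller gaps.} Two further steps are unsupported. First, the bound $\le k$ on the fibres of $\phi$ does not come from the coefficient bound. It comes from Lemma~\ref{lem23}: density of $E(Y)$ set against the dimension of $E(X)$ restricted to a finite set. Second, you need $[X]^k\in\mathcal P$, not just $X^k\in\mathcal P$; this is Lemma~\ref{lem28}.
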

 
In the case where the range $E(Y)$ is only dense in $D_p(Y)$ (not necessarily $\omega^*$-dense in $D(Y)$), we have the following versions of Theorems \ref{theorem-main} and \ref{th9}.

\begin{thm} \label{th2} Let $K\subset Y$ be a countably compact closed set.
\begin{enumerate}
\item[{\rm (a)}] Let $X$ and $Y$ be metrizable spaces and $\mathcal P$ be a topological property satisfying conditions 
$(a) - (c)$. Suppose that
$T: E_p(X)\to E_p(Y)$ is a densely defined linear continuous operator. 
 If all finite powers $X^k$, $k\in\mathbb N$, have the property $\mathcal P$ then $K$ has the property $\mathcal P$.
\item[{\rm (b)}] Let $X$ be a metrizable space, let $Y$ be perfectly normal and let $\mathcal P$ be either zero-dimensionality or strong countable-dimensionality.
Suppose that
$T: E_p(X)\to E_p(Y)$ is a densely defined linear continuous operator. 
If all finite powers $X^k$, $k\in\mathbb N$, have the property $\mathcal P$ then $K$ has the property $\mathcal P$.
\end{enumerate}
\end{thm}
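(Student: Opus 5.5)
The plan is to use the standard support machinery for linear continuous operators between $C_p$-spaces and to locate where density of $E(Y)$ in $D_p(Y)$ is enough. For each $y\in Y$, the functional $f\mapsto T(f)(y)$ is linear and continuous on $E_p(X)$. It therefore has a finite support $\supp(y)\subset X$, and there are nonzero reals $\lambda_x(y)$, $x\in\supp(y)$, with $T(f)(y)=\sum_{x\in\supp(y)}\lambda_x(y)f(x)$ for all $f\in E(X)$. This holds because $E(X)$ is dense in $D_p(X)$, so the functional extends to a continuous functional on $C_p(X)$, hence on $\mathbb R^X$, and is a finite combination of evaluations. Correctness of $E(X)$ is exactly what makes $\supp(y)$ and the coefficients uniquely determined: if two points of the representation differed, a function of $E(X)$ vanishing off a small neighborhood of one point would separate them. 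Correctness also gives the usual lower semicontinuity property: if $x\in\supp(y)$ and $U\ni x$ is open, then $\{y':\supp(y')\cap U\neq\varnothing\}$ is a neighborhood of $y$. To see this, take $f\in E(X)$ supported in a small neighborhood of $x$ with $f(x)\neq0$ and use continuity of $T(f)$. Since $T$ is surjective onto $E(Y)$, dense in $D_p(Y)$, $\supp(y)\neq\varnothing$ for every $y$.

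Next I stratify $Y=\bigcup_{n,m}Y_{n,m}$, where $Y_{n,m}=\{y:|\supp(y)|\le n$ and $\sum|\lambda_x(y)|\le m\}$. Each $Y_{n,m}$ is closed by lower semicontinuity of $y\mapsto|\supp(y)|$ and of the coefficient sum, argued as in \cite{elv} with test functions from $E(X)$. On $Y_{n,m}\setminus Y_{n-1,m}$ the map $y\mapsto\supp(y)\in[X]^{n}\cong X^n/S_n$ is continuous, and it is continuous into $X^n$ locally after ordering the points. Now intersect with the countably compact closed set $K$. Here density alone should suffice in place of $\omega^*$-density. In the metrizable and $\omega^*$-dense case one needs the coefficients bounded on pieces, and this is where countable sets $\{y_k\}$ and functions with large values at them enter. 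On $K$, however, every countable set has accumulation points, so an unbounded sequence of coefficients cannot occur. Concretely, if $\sum|\lambda_x(y_k)|\to\infty$ with $y_k\in K$, then by density of $E(Y)$ in $D_p(Y)$ I choose $g=T(f)$ close to prescribed values on a finite set near a cluster point $y$ of $\{y_k\}$. Combined with pointwise continuity of $T$, this yields a contradiction in the style of the Baire-category and boundedness arguments. I expect this boundedness/closedness step on $K$ to be the main obstacle. I would first try to show that $K\cap Y_{n,m}$ is closed and that $K=\bigcup_n (K\cap Y_{n,m_n})$ for suitable $m_n$, using only density and countable compactness.

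Finally, on each piece $K_n=K\cap(Y_{n}\setminus Y_{n-1})$, locally $F_\sigma$ in $K$, the support map gives a map into $[X]^n$. The set $Z_n=\{(y,x)\in K_n\times X^n: x \text{ enumerates }\supp(y)\}$ projects onto $K_n$ with fibers of size at most $n!$, and this projection is closed since $K$ is countably compact and metrizable, hence compact, in (a). Also, $Z_n$ embeds as a closed subset of $X^n$ via the second coordinate, because $y$ is recovered from $\supp(y)$ on the stratum. Hence $Z_n\in\mathcal P$ by (a) and the hypothesis on $X^n$. Applying (c) in the direction that yields $K_n\in\mathcal P$ needs care. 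Since $K$ is compact metrizable, $\supp$ restricted to the stratum is injective up to coefficients. I would instead use that $K_n\to[X]^n$ is a continuous injection on the compact set $\overline{K_n}\cap Y_n$, hence a closed embedding, so $K_n$ is homeomorphic to a closed subset of $[X]^n$. Moreover, $[X]^n$ is an image of $X^n$ under the finite-to-one closed quotient map, so (c) applies. For the finite-to-one map $Z_n\to K_n$, property (c) is used as in \cite{elv}. Then (b1) yields $K\in\mathcal P$. For (b), with $Y$ perfectly normal and $\mathcal P$ either zero-dimensionality or strong countable-dimensionality, I would repeat the argument, replacing (b1) and (c) by the normal-to-metrizable versions \cite[Theorems 3.3.10, 5.4.5]{en}. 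Perfect normality ensures that the open strata are $F_\sigma$.
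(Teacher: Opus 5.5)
Your overall shape matches the paper: stratify by $|\supp(l_y)|$, map the strata to $[X]^n$, then apply (c) and a countable sum theorem. But there are two genuine gaps.

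\textbf{The boundedness step is unnecessary and false.} You call it the main obstacle, but it plays no role. In the proof of Theorem \ref{theorem-main}, the bound $a(y)\le p$ and $\omega^*$-density serve only to make the support map closed. On a closed subset $F$ of the countably compact set $K$, every continuous map into the metrizable space $[X]^n$ is automatically closed, because the image of a closed subset of $F$ is countably compact, hence compact, in a metrizable space.

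Boundedness itself can fail. Let $X=[0,1]$ and let $E(X)$ consist of the piecewise linear functions with finitely many rational breakpoints; this subspace is correct. Put $x_0=1/\sqrt2$, $x_1=0$, $Y=K=\{0\}\cup\{1/j:j\ge 2\}$, and define $T(f)(0)=f(x_1)$ and $T(f)(1/j)=j\bigl(f(x_0+j^{-3})-f(x_0-j^{-3})\bigr)+f(x_1)$. Each $f\in E(X)$ is affine near the irrational point $x_0$, so $T(f)(1/j)\to f(x_1)$ and $T(f)\in C(Y)$. The functionals $l_y$ are linearly independent combinations of evaluations, so the range is dense and $T$ is densely defined. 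Yet the coefficient sums $2j+1$ are unbounded on the compact set $K$, even inside the stratum $|\supp(l_y)|=3$. So the decomposition $K=\bigcup_n(K\cap Y_{n,m_n})$ you hope for does not exist in general.

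\textbf{The final step rests on false claims.}
\begin{enumerate}
\item[(i)] The support map is not injective on a stratum. For example, $T(f)(y)=f(u)+f(v)$ and $T(f)(y')=f(u)-f(v)$ can both occur. Hence $y$ is not recovered from $\supp(l_y)$, $Z_n$ does not embed in $X^n$, and $K_n$ is not homeomorphic to a closed subset of $[X]^n$.
\item[(ii)] $y\mapsto\supp(l_y)$ is not continuous on $\overline{K_n}$ where the support size drops. At $0$ in the example above, $\supp(l_{1/j})\to\{x_0,x_1\}\neq\supp(l_0)$.
\item[(iii)] Condition (c) transfers $\mathcal P$ from the range of a map to its domain. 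It therefore yields neither $[X]^n\in\mathcal P$ from $X^n$, nor $K_n\in\mathcal P$ from $Z_n$.
\end{enumerate}

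\textbf{What is missing.} You need two ingredients.
\begin{enumerate}
\item[(1)] Lemma \ref{lem23}: the fibers of $y\mapsto\supp(l_y)$ on the $n$-th stratum have at most $n$ points, which is exactly the hypothesis of (c).
\item[(2)] Lemma \ref{lem28}: $[X]^n\in\mathcal P$, obtained from (a) and (b) via a $\sigma$-locally finite closed cover of $[X]^n$ by copies of products $\prod_i\cl(U_i)$.
\end{enumerate}

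With these the argument closes as in the paper:
\begin{enumerate}
\item[(1)] Using perfect normality, write $K\cap(Y_n\setminus Y_{n-1})$ as a countable union of closed subsets $K_{nj}$ of $K$.
\item[(2)] On each $K_{nj}$ the support map into $[X]^n$ is continuous, by Proposition \ref{pro22}(iii) and exact cardinality $n$. It is closed by countable compactness, and its fibers have size at most $n$.
\item[(3)] Condition (c), or \cite[Theorems 3.3.10, 5.4.5]{en} in case (b), gives $K_{nj}\in\mathcal P$.
\item[(4)] The countable sum theorem then gives $K\in\mathcal P$.
\end{enumerate}
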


Zakrzewski \cite[Theorem 3.12]{kz} proved the following result: Suppose $X$ and $Y$ are $\sigma$-compact spaces such that $X$ is finite-dimensional.
If there is a linear continuous map $T:C_p(X)\to C_p(Y)$ such that $T(C_p(X))$ is dense in $C_p(Y)$ and $|\supp(l_y)|\leq m$ for every $y\in Y$, then
$\dim Y\leq m\cdot\dim X+m+m!-1$.  In the case when $X$ and $Y$ are metrizable, we can get a better estimate of $\dim Y$.

\begin{thm}\label{th11}
 Let $X$ be a metrizable space and $Y$ be a perfectly normal space. Suppose $T:E_p(X)\to E_p(Y)$ is a densely defined linear continuous operator such that $E(X)$ is $\omega^*$-dense in $D(Y)$.
If $|\supp(l_y)|\leq m$ for every $y\in Y$ then $\dim Y\leq m\cdot\dim X$.
\end{thm}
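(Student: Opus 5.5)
We use the support map $y\mapsto \supp(l_y)$, where $l_y\in E_p(X)'$ is the functional $f\mapsto T(f)(y)$. Since $E(X)$ is correct, $l_y$ is a finite linear combination of point evaluations, and $\supp(l_y)$ is the finite set of points involved.

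First we stratify $Y$. For $k\le m$ let $Y_k=\{y\in Y:|\supp(l_y)|\le k\}$. By lower semicontinuity of $y\mapsto\supp(l_y)$, each $Y_k$ is closed in $Y$. Lower semicontinuity here means that if $x\in\supp(l_y)$ and $U\ni x$ is open, then $\supp(l_{y'})\cap U\ne\varnothing$ for $y'$ near $y$. This follows from correctness: pick $f\in E(X)$ vanishing off $U$ with $T(f)(y)\ne0$, which is possible by isolating $x$ from the other support points. Also $Y_0=\varnothing$. Indeed, if $l_y=0$ then every $g\in E(Y)$ satisfies $g(y)=0$, contradicting the density of $E(Y)$ in $D_p(Y)$. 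Each difference $Y_k\setminus Y_{k-1}$ is an $F_\sigma$ in the perfectly normal space $Y$. By the countable sum theorem for normal spaces \cite[Theorem 3.1.8]{en}, and its version for $F_\sigma$ subsets of perfectly normal spaces, it therefore suffices to show $\dim F\le m\cdot\dim X$ for closed $F\subset Y_k\setminus Y_{k-1}$. In fact we will aim for $\dim F\le k\dim X$.

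Next, on $Z_k=Y_k\setminus Y_{k-1}$ the support has exactly $k$ points, and the map $\Phi:Z_k\to[X]^k$, $y\mapsto\supp(l_y)$, is continuous in the Vietoris sense. Lower semicontinuity gives at least one point of $\supp(l_{y'})$ near each point of $\supp(l_y)$, and the cardinality is constant, so the support points must track. Locally, after fixing disjoint neighborhoods of the $k$ points, $\Phi$ lifts to a continuous map into $X^k$. The space $[X]^k$ is metrizable and is the image of $X^k$ minus the fat diagonal under a finite-to-one open-closed quotient, so $\dim[X]^k\le\dim X^k\le k\dim X$, using $\dim X^k\le k\dim X$ for metrizable $X$.

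The crucial step is showing that $\Phi$ restricted to a closed $F\subset Z_k$ has zero-dimensional fibers, or better is injective up to a small-dimensional defect. Then a Hurewicz-type theorem, $\dim F\le\dim\Phi(F)+\dim\Phi$, yields the bound. Here the $\omega^*$-density enters (the hypothesis presumably means $E(Y)$ is $\omega^*$-dense in $D(Y)$). Suppose a fiber $\Phi^{-1}(A)$ with $A=\{x_1,\dots,x_k\}$ contained a non-zero-dimensional set. Take countably many points $y_j$ in the fiber. Each $l_{y_j}$ lies in the $k$-dimensional span of $\delta_{x_1},\dots,\delta_{x_k}$, so every $g\in E(Y)$ restricted to $\{y_j\}$ is determined linearly by $k$ numbers. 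Hence $E(Y)|_{\{y_j\}}$ lies in a $k$-dimensional space, contradicting $\omega^*$-density as soon as the fiber contains $k+1$ points, by approximating a bounded function on $k+1$ points outside that space. So fibers have at most $k$ points. To keep closedness, I expect the main obstacle to be that $\Phi|_F$ need not be closed. I would handle this by writing $F$ as a countable union of closed pieces on which $\Phi$ is a perfect map, using metrizability of $X$ and a $\sigma$-discrete base to localize. Then the theorem on finite-to-one closed maps \cite[Theorem 3.3.10]{en}, combined with condition (c), gives $\dim F\le\dim\Phi(F)\le k\dim X\le m\dim X$. The countable sum theorem then finishes the proof.
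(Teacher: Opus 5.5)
\textbf{There is a genuine gap: you never show that the support map is closed on the pieces.}

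Most of your outline is sound and matches the paper. The sets $Y_k$ are closed by lower semicontinuity, and $\Phi$ is Vietoris-continuous on $Z_k$. Your covering-map argument for $\dim[X]^k\le k\dim X$ is a valid substitute for Corollary 2.9. Fibres have at most $k$ points. Once $\Phi$ is closed on countably many closed pieces covering $Y$, the finite-fibre Hurewicz theorem and the countable sum theorem finish the proof.

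The missing step is the one you flag and then only gesture at: finding those pieces. This is the heart of the theorem, and localizing with a $\sigma$-discrete base of $X$ does not address the obstruction. $\Phi$ fails to be closed on a closed $F$ exactly when some closed discrete sequence $\{y_j\}\subset F$ has $\Phi(y_j)$ convergent in $[X]^k$. What permits this is that the coefficients $\lambda_i(y_j)$ in $l_{y_j}=\sum_i\lambda_i(y_j)\delta_{x_i(y_j)}$ are unbounded, and that information is not recorded by where the supports sit in $X$.

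For example, take $X=[0,1]$, $Y=(0,1]$ and $T(f)(y)=f(y)/y$. All hypotheses hold with $m=1$; indeed $C^*(Y)\subset T(C(X))$, so the range is $\omega^*$-dense. Yet $\Phi$ is the non-closed inclusion $(0,1]\hookrightarrow[0,1]$. Here $\sum_i|\lambda_i(y)|=1/y$, and the pieces that work are exactly the coefficient-bounded sets $[1/p,1]$.

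The missing idea is the paper's stratification by the size of the functional. Set $a(y)=\sup\{|l_y(f)|: f\in E(X),\ |f|<1 \mbox{ on } \supp(l_y)\}$, which equals $\sum_i|\lambda_i(y)|$.
\begin{itemize}
\item \emph{Pieces are closed (Claim 2).} Using correctness to replace a test function by one with $|g|<1$ on all of $X$, together with lower semicontinuity, one shows that $\{a\le p\}$ is closed in $M_k=Y_k\setminus Y_{k-1}$. Hence the sets $M_{kpn}=\{a\le p\}\cap M_{kn}$ are closed in $Y$ and cover it.
\item \emph{$\Phi$ is closed on each piece (Claim 3).} Suppose $\{y_j\}\subset M_{kpn}$ is closed discrete with $\Phi(y_j)\to\{x_1,\dots,x_k\}$. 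For each $B$ in an uncountable almost disjoint family of subsets of $\N$, $\omega^*$-density gives $f_B\in E(X)$ with $T(f_B)(y_j)$ within $1/3$ of $p+1$ for $j\in B$ and within $1/3$ of $0$ for $j\notin B$.
\item \emph{The contradiction.} Separability of $\R^k$ gives $B_1\neq B_2$ with $|f_{B_1}(x_i)-f_{B_2}(x_i)|<1$ for all $i$. By continuity the same holds on $\supp(l_{y_j})$ for all large $j$. Then $a(y_j)\le p$ forces $|T(f_{B_1}-f_{B_2})(y_j)|\le p$, which fails for the infinitely many $j\in B_1\setminus B_2$.
\end{itemize}

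In your sketch, $\omega^*$-density is used only for the fibre bound, where density of $E(Y)$ in $D_p(Y)$ already suffices (Lemma 2.3). That is a sign the decisive use of the hypothesis is absent. Without the coefficient stratification and this argument, your proof does not go through.
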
 
 
In the last part of this paper, Section \ref{section4}, for arbitrary Tychonoff spaces $X$ and $Y$, we obtain new results assuming only that $T: E_p(X)\to E_p(Y)$ is a densely defined linear continuous operator.
We show that $X\in\mathcal P$ implies $Y\in\mathcal P$, where  $\mathcal P$ is the property $(\kappa)$, the strong $\sigma$-scatteredness,
or the property of being a $\Delta_1$-space. Also, we establish an analogous result for the property $\mathcal P$ of being a $\Delta$-space provided $E(X)$ is a strongly correct subspace of $D(X)$.
All necessary definitions are given in Section \ref{section4}.

These results strengthen several theorems from \cite{KL2}, \cite{KKL} and \cite{kk}. 

\section{Preliminary results}\label{section2}
In this section all topological spaces are assumed to be Tychonoff.

\begin{lem}\label{lem-0} 
Every correct subspace $E(X)\subset D(X)$ is dense in $D_p(X)$.
\end{lem}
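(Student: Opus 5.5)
To prove Lemma \ref{lem-0}, I will show directly that every basic open set of $D_p(X)$ meets $E(X)$. Fix $f\in D(X)$, a finite set $\{x_1,\dots,x_n\}\subset X$ of distinct points, and $\varepsilon>0$. It suffices to find $g\in E(X)$ with $g(x_i)=f(x_i)$ for all $i\le n$; such a $g$ lies in the standard basic neighborhood $\{h\in D(X): |h(x_i)-f(x_i)|<\varepsilon,\ i\le n\}$.

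The construction uses correctness to produce functions that separate the points. Since $X$ is Tychonoff, hence Hausdorff, for each $i$ I choose an open neighborhood $U_i$ of $x_i$ with $x_j\notin U_i$ for $j\neq i$. This is possible because there are finitely many points: take $U_i=X\setminus\{x_j:j\neq i\}$, which is open since points are closed. Definition \ref{df-correct}(a) then gives $f_i\in E(X)$ with $f_i(x_i)\neq 0$ and $f_i(X\setminus U_i)=0$. In particular $f_i(x_j)=0$ for $j\neq i$. Set
$$g=\sum_{i=1}^n \frac{f(x_i)}{f_i(x_i)}\, f_i .$$
Because $E(X)$ is a linear subspace, $g\in E(X)$, and by construction $g(x_j)=f(x_j)$ for every $j$. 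So $g$ lies in the prescribed neighborhood of $f$, which proves density.

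The argument has no real obstacle. The only point needing care is that the neighborhoods $U_i$ must exclude the other chosen points, and this is immediate from the $T_1$ property. Notably, the proof does not need $E(X)\subset D(X)$ to consist of bounded functions, nor any closure properties beyond linearity. It therefore works uniformly whether $D(X)$ is $C(X)$ or $C^*(X)$.
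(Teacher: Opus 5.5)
Your proof is correct and follows essentially the same route as the paper: correctness gives, for each $x_i$, a function in $E(X)$ that is nonzero at $x_i$ and vanishes off a neighborhood missing the other points, and a linear combination of these lands in the basic neighborhood of $f$. The differences are cosmetic. You use the $T_1$ complements $X\setminus\{x_j:j\neq i\}$ and rescale by $f(x_i)/f_i(x_i)$ to get exact interpolation $g(x_j)=f(x_j)$, whereas the paper takes pairwise disjoint neighborhoods and approximates each $f(x_i)$ by a nonzero value $t_i$.
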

\begin{proof}Indeed, let $f\in D(X)$ and
 $$U(f,K,\varepsilon)=\{g\in D(X): |f(x)-g(x)|<\varepsilon{~}\mbox{for all}{~}x\in K\}$$
  be a neighborhood of $f$ in $D_p(X)$, 
where $K=\{x_1,x_2,..,x_k\}\subset X$ is finite. We take disjoint neighborhoods $U_i$ of $x_i$ and real numbers $t_i\neq 0$ with $|f(x_i)-t_i|<\varepsilon$ for all $i$. Since $E(X)$ is a correct linear subspace of $D(X)$, for every $i$ there is $g_i\in E(X)$ with $g_i(x_i)=t_i$ and $g_i(x)=0$ for $x\not\in U_i$.
Then 
$$g=\sum_{i=1}^k g_i\in E(X)\,\, \mbox{and}\,\, g\in U(f,K,\varepsilon).$$
\end{proof}

Suppose $T:E_p(X)\to E_p(Y)$ is a linear continuous operator, where $E(X)$ is a linear subspace of $D(X)$.
Every $y\in Y$ generates a linear continuous functional $l_y: E_p(X)\to\mathbb R$ defined by $l_y(f)=T(f)(y)$. According to the analogue of the Hahn-Banach theorem for locally convex linear topological spaces \cite[Corollary 7.3.3]{nb}, $l_y$ can be continuously extended to a linear functional $\widetilde l_y:C_p(X)\to\mathbb R$. 
We denote by $\supp(l_y)$ the set of all $x\in X$ such that for every neighborhood $U_x$ of $x$ there exists $f\in E(X)$ with $f(X\backslash U_x)=0$ and $l_y(f)\neq 0$. Similarly, $\supp(\widetilde l_y)$ consists of all $x\in X$ such that for every neighborhood $U_x$ of $x$ there exists $f\in C(X)$ with $f(X\backslash U_x)=0$ and $\widetilde l_y(f)\neq 0$. 
It is well known, see for example \cite{ar1} or \cite{vanMill}, that for every $y\in Y$ there exist a finite set $\{x_1(y), x_2(y), \dots, x_k(y)\}$ in $X$ and real numbers 
$\lambda_i(y)\neq 0$, $i = 1, 2, \dots, k$,
such that for all $f\in C(X)$ we have
 $$\widetilde l_y(f)=\sum_{i=1}^k\lambda_i(y)f(x_i(y)).$$ 
 Obviously, $\supp(\widetilde l_y)=\{x_1(y), x_2(y), \dots, x_k(y)\}$. Here we provide some properties of the supports $\supp(l_y)$.

\begin{pro}\label{pro22} 
Let $T:E_p(X)\to E_p(Y)$ be a densely defined linear continuous operator. Then
\begin{itemize}
\item[(i)] $\supp(l_y)\neq\varnothing$ and $\supp(l_y)=\supp(\widetilde l_y)$ for every $y\in Y$;
\item[(ii)] For every $y\in Y$ there are real numbers $\lambda_i(y)$, $i=1,2,..,k$, such that $l_y(f)=\sum_{i=1}^k\lambda_i(y)f(x_i(y))$ for all $f\in E(X)$, where $\supp(l_y)=\{x_1(y),x_2(y),..,x_k(y)\}$;
\item[(iii)] If $\supp(l_y)\cap U\neq\varnothing$ for some open $U\subset X$, 
then there is a neighborhood $V\subset Y$ of $y$ such that $\supp(l_{z})\cap U\neq\varnothing$ for every $z\in V$.
\end{itemize}
\end{pro}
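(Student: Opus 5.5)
The plan is to pass everything through the Hahn--Banach extension $\widetilde l_y(f)=\sum_{i=1}^k\lambda_i(y)f(x_i(y))$, with all $\lambda_i(y)\neq 0$ and the points $x_i(y)$ distinct. Correctness of $E(X)$ will then show that $E(X)$ detects each point of $\supp(\widetilde l_y)$ separately. Recall that a densely defined operator is, by the convention above, a linear continuous \emph{surjection} onto $E(Y)$, and $E(Y)$ is dense in $D_p(Y)$.

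\emph{Step 1: $\supp(l_y)=\supp(\widetilde l_y)$.}
\begin{itemize}
\item[($\supseteq$)] Let $x=x_i(y)$ and let $U_x$ be any neighborhood of $x$. Since $X$ is Tychonoff, we can choose an open $W\subset U_x$ containing $x_i(y)$ and no other $x_j(y)$. By correctness, there is $f\in E(X)$ with $f(x_i(y))\neq0$ and $f(X\setminus W)=0$. Then $f(X\setminus U_x)=0$ and
$$l_y(f)=\widetilde l_y(f)=\lambda_i(y)f(x_i(y))\neq0.$$
Hence $x\in\supp(l_y)$.
\item[($\subseteq$)] Let $x\notin\{x_1(y),\dots,x_k(y)\}$, and take a neighborhood $U_x$ missing all the points $x_j(y)$. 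Any $f\in E(X)$ vanishing off $U_x$ vanishes at every $x_j(y)$, so $l_y(f)=\widetilde l_y(f)=0$. Hence $x\notin\supp(l_y)$.
\end{itemize}

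\emph{Step 2: nonemptiness and (ii).} By density of $E(Y)$ in $D_p(Y)$, there is $g\in E(Y)$ with $g(y)\neq0$. By surjectivity, $g=T(f)$ for some $f\in E(X)$. Then $l_y(f)=g(y)\neq0$, so $\widetilde l_y\neq0$, hence $k\geq1$ and $\supp(l_y)\neq\varnothing$ by Step 1. Statement (ii) follows immediately: $l_y$ is the restriction of $\widetilde l_y$ to $E(X)$, and by Step 1 the points $x_i(y)$ are exactly the elements of $\supp(l_y)$.

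\emph{Step 3: (iii).} Suppose $x_i(y)\in U$. As in Step 1, choose an open $W\subset U$ containing $x_i(y)$ and no other point of $\supp(l_y)$, and choose $f\in E(X)$ with $f(x_i(y))\neq0$ and $f(X\setminus W)=0$. Then $T(f)(y)=l_y(f)=\lambda_i(y)f(x_i(y))\neq0$. Since $T(f)$ is continuous, there is a neighborhood $V$ of $y$ with $T(f)(z)\neq0$ for all $z\in V$. For such $z$, (ii) gives
$$0\neq l_z(f)=\sum_{x\in\supp(l_z)}\lambda(x)f(x),$$
where the coefficients $\lambda(x)$ are those of $l_z$. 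So $f(x)\neq0$ for some $x\in\supp(l_z)$, which forces $x\in W\subset U$.

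The only delicate point is Step 1, where correctness must produce functions in $E(X)$, not merely in $C(X)$, that isolate a single support point. Once that is done, the rest is routine.
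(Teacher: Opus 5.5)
Your proof is correct and follows the paper's approach. The paper shows $\supp(\widetilde l_y)\subset\supp(l_y)$ by contradiction, using correctness exactly as your Step 1 does directly, gets nonemptiness from density plus surjectivity, and proves (iii) via continuity of $T(f)$ at $y$. The only cosmetic difference is in (iii), where the paper takes the witness $f$ straight from the definition of $\supp(l_y)$ (with $U$ as the neighborhood) rather than rebuilding it via correctness.
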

\begin{proof}
(i) Let us show first that $\supp(\widetilde l_y)\neq\varnothing$ for all $y\in Y$. Indeed, since $E(Y)$ is dense in $D_p(Y)$, there exists $g\in E(Y)$ with 
$g(y)\neq 0$. Then $T(f)=g$ for some $f\in E(X)$, so $\widetilde l_y(f)=g(y)$. Hence, $\widetilde l_y$ is a nontrivial linear continuous functional on $C_p(X)$
and $\supp(\widetilde l_y)\neq\varnothing$. 
To show the equality of both supports, let $\supp(\widetilde l_y)=\{x_1(y),x_2(y),..,x_k(y)\}$ and
 $$\widetilde l_y(f)=\sum_{i=1}^k\lambda_i(y)f(x_i(y))$$
 for every $f\in C(X)$, where all $\lambda_i(y)$ are non-zero real numbers. Striving for a contradiction, suppose there is $x_j(y)\in\supp(\widetilde l_y)\backslash\supp(l_y)$. Then, by the definition of $\supp(l_y)$, there exists a neighborhood $U$ of $x_j(y)$ such that
$l_y(f)=0$ for every $f\in E(X)$ with $f(X\backslash U)=0$. We can assume that $x_i(y)\not\in U$ for all $i\neq j$. Now, choose $f\in E(X)$ such that $f(x_j(y))\neq 0$ and $f(X\backslash U)=0$. Therefore, $\widetilde l_y(f)=l_y(f)=0$. On the other hand,
$$\widetilde l_y(f)=\lambda_jf(x_j(y))\neq 0.$$
 Thus, 
$\supp(\widetilde l_y)\subset\supp(l_y)$. The converse inclusion is obvious, so $\supp(\widetilde l_y)=\supp(l_y)$.

(ii) This item was actually proved in (i).
 
(iii) Suppose $x\in\supp(l_y)\cap U$. Then there exists $f\in E(X)$ such that $f(X\backslash U)=0$ and $l_y(f)\neq 0$. Since $l_y(f)=T(f)(y)$ and $T(f)\in C(Y)$, there is a neighborhood $V$ of $y$ with $T(f)(z)\neq 0$ for all $z\in V$. If $\supp(l_{z})\cap U=\varnothing$ for some $z\in V$, then $\supp(\widetilde l_{z})\cap U=\varnothing$. Hence 
$$\widetilde l_z(f)=\sum\lambda_i(z)f(x_i(z))=0.$$
Finally, $f\in E(X)$ implies $\widetilde l_z(f)=l_z(f)=T(f)(z)\neq 0$, a contradiction. 
\end{proof}

\begin{lem}\label{lem23}
Let $T: E_p(X)\to E_p(Y)$ be a densely defined linear continuous operator. If $K\subset X$ is a finite set then the cardinality of the set $P=\{y\in Y:\supp(l_y)\subset K\}$ does not exceed the size of $K$.
\end{lem}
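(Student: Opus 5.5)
Write $K=\{x_1,\dots,x_n\}$. The plan is a linear-algebra argument: attach to each point of $P$ a vector in $\mathbb R^n$, show these vectors are linearly independent, and conclude $|P|\le n$.

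By Proposition \ref{pro22}(ii), for every $y\in P$ there are reals $\lambda_1(y),\dots,\lambda_n(y)$ (some possibly zero) such that
$$T(f)(y)=l_y(f)=\sum_{i=1}^n\lambda_i(y)f(x_i)\quad\mbox{for all } f\in E(X).$$
Set $v(y)=(\lambda_1(y),\dots,\lambda_n(y))\in\mathbb R^n$. For every $f\in E(X)$ the value $T(f)(y)$ is then the inner product of $v(y)$ with the vector $\big(f(x_1),\dots,f(x_n)\big)$. Assume, towards a contradiction, that $P$ contains $n+1$ distinct points $y_1,\dots,y_{n+1}$.

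The vectors $v(y_1),\dots,v(y_{n+1})$ lie in $\mathbb R^n$, so they are linearly dependent. Hence there are reals $\mu_1,\dots,\mu_{n+1}$, not all zero, with $\sum_j\mu_j v(y_j)=0$. Consequently, for every $f\in E(X)$,
$$\sum_{j=1}^{n+1}\mu_j\,T(f)(y_j)=0.$$
Since $T$ maps $E(X)$ onto $E(Y)$, this says that $\sum_j\mu_j g(y_j)=0$ for every $g\in E(Y)$.

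The remaining step, which is the main point, is to contradict this using density of $E(Y)$. Pick $j_0$ with $\mu_{j_0}\neq0$. Because $Y$ is Tychonoff, there is $h\in D(Y)$ with $h(y_{j_0})=1$ and $h(y_j)=0$ for $j\ne j_0$. Since $E(Y)$ is dense in $D_p(Y)$, there is $g\in E(Y)$ that approximates $h$ at the finitely many points $y_1,\dots,y_{n+1}$ within an $\varepsilon$ small relative to $|\mu_{j_0}|$ and $\sum_j|\mu_j|$. For such $g$,
$$\Big|\sum_j\mu_j g(y_j)-\mu_{j_0}\Big|\le\varepsilon\sum_j|\mu_j|<|\mu_{j_0}|,$$
so $\sum_j\mu_j g(y_j)\neq0$. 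This contradicts the identity above, hence $|P|\le n=|K|$.

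There is one degenerate case: if $K=\varnothing$, then $P=\varnothing$ by Proposition \ref{pro22}(i), since every support is nonempty.
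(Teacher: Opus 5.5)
Your proof is correct and is essentially the paper's argument in dual form. The paper factors $f\mapsto (T(f)(y_j))_{j\le n+1}$ through the at most $n$-dimensional space $\pi(E(X))\subset C(K)$ and observes that $C_p(P')\cong\mathbb R^{n+1}$ has no dense subspace of dimension $\le n$; you encode the same factorization through the coefficient vectors $v(y_j)\in\mathbb R^n$ from Proposition \ref{pro22}(ii), and you check the non-density explicitly via the vanishing linear relation. In both arguments, correctness of $E(X)$ enters only through Proposition \ref{pro22}.
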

\begin{proof}
This lemma was established by Krupski--Kucharski \cite[Lemma 1.2]{kk} when $T:C_p(X)\to C_p(Y)$ is a linear continuous surjection. The same proof works in our situation. For the reader's convenience we provide the details. Let $K=\{x_1,x_2,..,x_n\}$ and 
suppose $P$ contains a subset $P'$ consisting of $n+1$ different points $y_1,y_2,...,y_{n+1}$. Then 
consider the linear operator $\varphi:\pi(E(X))\to C_p(P')$ defined by $\varphi(\pi(f))(y_i)=T(f)(y_i)$, where $\pi:E(X)\to C(K)$ is the restriction operator. 
This operator is well-defined because $\supp(l_y)\subset K$ for all $y\in P$. Moreover, since $E(Y)$ is a dense subset of $D_p(Y)$, the image $\varphi(\pi(E(X)))$ is dense in $C_p(P')$. But $\pi(E(X))$ is at most an $n$-dimensional linear space as a subspace of $C(K)$, so the image $\varphi(\pi(E(X)))$ is of dimension $\leq n$.
 This is impossible because $C_p(P')$ is of dimension $n+1$ and does not contain a
 dense linear subspace of dimension $\leq n$.  
\end{proof}

\begin{rem}\label{r_key}
{\em Lemma \ref{lem23} has been used systematically throughout the paper. Let us explain here why the assumption that $E(X)$ is a correct subspace plays a key role. 
 First, correctness implies the density of $E(X)$ in $D_p(X)$ (Lemma \ref{lem-0}). But density itself is not enough to show that $\supp(l_y)=\supp(\widetilde l_y)$ for every $y\in Y$, 
see the proof of Proposition \ref{pro22}(i).

 In Lemma \ref{lem23} we show that the operator $\varphi$ is well-defined. Without the assumption that $E(X)$ is a correct subspace of $D(X)$, this cannot be established. 
}
\end{rem}

\begin{lem}\label{lem24}
Let $T:E_p(X)\to E_p(Y)$ be a densely defined linear continuous operator and $K\subset X$ and $L\subset Y$ 
such that $\supp(l_y)\subset K$ for all $y\in L$. Then there exists a densely embedded linear continuous operator $\varphi:E_p(K)\to E_p(L)$. 
\end{lem}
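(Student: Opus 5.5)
Set $E(K)=\pi_K(E(X))$ and $E(L)=\pi_L(E(Y))$, the images of the restriction maps $\pi_K:E(X)\to C(K)$ and $\pi_L:E(Y)\to C(L)$, both with the pointwise topology. Note that if $E(X)\subset C^*(X)$ then $E(K)\subset C^*(K)$, and similarly for $L$. Define
$$\varphi(\pi_K(f))=\pi_L(T(f)),\qquad f\in E(X).$$
The plan is to show, in order, that $\varphi$ is well defined, linear and continuous, onto $E(L)$, and densely defined, meaning $E(K)$ is correct in $D(K)$ and $E(L)$ is dense in $D_p(L)$.

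\emph{Well-definedness.} This is the key step, and it follows the idea of Lemma \ref{lem23}. Suppose $f\in E(X)$ vanishes on $K$ and $y\in L$. By Proposition \ref{pro22}(ii),
$$T(f)(y)=l_y(f)=\sum_i\lambda_i(y)f(x_i(y)),$$
where every $x_i(y)\in\supp(l_y)\subset K$. Hence $T(f)(y)=0$. So $\pi_K(f)=0$ implies $\pi_L(T(f))=0$, and $\varphi$ is well defined and linear.

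\emph{Continuity.} Fix $y\in L$ and write $\varphi(g)(y)=\sum_i\lambda_i(y)g(x_i(y))$ for $g\in E(K)$. This is a finite linear combination of evaluations at points of $K$, so it is continuous in the pointwise topology of $E(K)$. Since the topology of $E_p(L)$ is generated by evaluations at points $y\in L$, $\varphi$ is continuous.

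\emph{Surjectivity.} $T$ maps $E(X)$ onto $E(Y)$, so $\varphi$ maps $E(K)$ onto $\pi_L(E(Y))=E(L)$.

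\emph{Density of the range.} $E(Y)$ is dense in $D_p(Y)$, and every $h\in D(L)$ is matched on a finite set $F\subset L$ by some $h'\in D(Y)$; take a continuous (bounded) function interpolating finitely many values, which exists by the Tychonoff property. Then $h'$ is approximated on $F$ by elements of $E(Y)$, so $E(L)$ is dense in $D_p(L)$.

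\emph{Correctness of $E(K)$.} This is the step needing the most care, because a general $D(K)$ need not consist of restrictions of functions on $X$. Correctness only asks for local bump functions, though. Take $x\in K$ and an open set $U$ in $K$ containing $x$, and write $U=W\cap K$ with $W$ open in $X$. Correctness of $E(X)$ gives $f\in E(X)$ with $f(x)\neq0$ and $f(X\setminus W)=0$. Then $\pi_K(f)(x)\neq0$ and $\pi_K(f)$ vanishes on $K\setminus U$. By Lemma \ref{lem-0}, correctness of $E(K)$ also gives its density in $D_p(K)$.

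Altogether $\varphi:E_p(K)\to E_p(L)$ is a densely defined linear continuous surjection. I read the lemma's ``densely embedded'' as ``densely defined'' in the paper's sense. The main obstacle is well-definedness, and it is exactly where correctness of $E(X)$ enters, through the equality of supports in Proposition \ref{pro22}.
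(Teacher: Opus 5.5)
Your proposal is correct and follows the paper's proof. You define $\varphi(\pi_K(f))=\pi_L(T(f))$ and obtain well-definedness, linearity and continuity from the representation $l_y(f)=\sum\lambda_i(y)f(x_i(y))$ of Proposition~\ref{pro22}(ii) together with $\supp(l_y)\subset K$. You then verify correctness of $\pi_K(E(X))$ and density of $\pi_L(E(Y))$, which the paper only asserts with ``one can show''. Your reading of ``densely embedded'' as ``densely defined'' matches how the lemma is used in the proof of Theorem~\ref{theorem-main}.
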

\begin{proof}
Let $\pi_K:E(X)\to D(K)$ and $\pi_L:E(Y)\to D(L)$ be the restriction maps 
$\pi_K(f)=f\restriction_{K}$ and $\pi_L(g)=g\restriction_{L}$.  
Since $E(X)$ is correct in $D_p(X)$ and $E(Y)$ is dense in $D_p(Y)$, one can show that
$E(K)=\pi_K(E(X))$ is a correct linear subspace of $D_p(K)$ and $E(Y)=\pi_L(E(Y))$ is a dense subspace of $D_p(L)$. 
Define $\varphi:E_p(K)\to E_p(L)$ by $\varphi(\pi_K(f))=\pi_L(T(f))$. This definition is correct because $\supp(l_y)\subset K$ for all $y\in L$. Moreover,
$\varphi$ is surjective. The continuity of $\varphi$ follows from the representation 
$$l_y(f)=\sum\lambda_i(y)f(x_i(y)), f\in E(X),$$ see Proposition \ref{pro22}(ii).
Indeed, if
$\pi_K(f_\alpha)$ is a net in $E_p(K)$ converging to some $\pi_K(f)$, then $l_y(\pi_K(f_\alpha))=\sum\lambda_i(y)f_\alpha(x_i(y))$. So,
$$\lim_\alpha l_y(\pi_K(f_\alpha))=l_y(f)=\pi_L(T(f))(y).$$
 Therefore, $\varphi$ is continuous. Linearity also follows from the equality
$$l_y(f)=\sum\lambda_i(y)f(x_i(y)), f\in E(X).$$
\end{proof}

Recall that a subset $L\subset Y$ is {\em bounded} if $f(L)$ is a bounded set in $\mathbb R$ for every $f\in C(Y)$. 
The next proposition is an analogue of Lemma 1.4.5 from \cite{bd}, see also \cite[Proposition 6.8.6]{vanMill}.

\begin{pro}\label{pro25}
Let $T:E_p(X)\to E_p(Y)$ be a densely defined linear continuous operator such that $E(Y)$ is $\omega$-dense in $C(Y)$. Then for every closed and bounded subset $K\subset X$, the set $L=\{y\in Y:\supp(l_y)\subset K\}$ is closed and bounded in $Y$.
\end{pro}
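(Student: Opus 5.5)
The plan is to prove closedness and boundedness separately, both by contradiction, using the finite-support representation of Proposition \ref{pro22}(ii). For boundedness, the $\omega$-density of $E(Y)$ (a countable set of points suffices) lets us realize a prescribed divergent sequence of values by some $T(f)$. Continuity of $T$ at $0$ then controls $T(f)$ on $L$ through finitely many points of $X$.

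\emph{Closedness.} Let $y_0\in\cl(L)$ and suppose, towards a contradiction, that $\supp(l_{y_0})\not\subset K$. Pick $x\in\supp(l_{y_0})\setminus K$. Since $K$ is closed, $U=X\setminus K$ is an open neighbourhood of $x$. By Proposition \ref{pro22}(iii) there is a neighbourhood $V$ of $y_0$ such that $\supp(l_z)\cap U\neq\varnothing$ for every $z\in V$. But $V$ meets $L$, and every $z\in L$ has $\supp(l_z)\subset K$, which is disjoint from $U$. This contradiction gives $y_0\in L$, so $L$ is closed.

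\emph{Boundedness.} Suppose $L$ is not bounded. Then there are $h\in C(Y)$ and points $y_n\in L$ with $|h(y_n)|>n$; passing to a subsequence, we may assume $|h(y_{n+1})|>|h(y_n)|+1$.
\begin{enumerate}
\item Using continuity of $T$ at $0$, choose a finite set $F\subset X$ and $\delta>0$ such that $\|f\restriction F\|<\delta$ implies $|T(f)(y)|<1$ for a fixed point $y$. We need this uniformly on $L$, which is not available directly, so the argument is run on the countable set $\{y_n\}$ instead.
\item Apply $\omega$-density to the countable set $\{y_n\}$, the function $n\cdot h$ (or a suitable rescaling), and $\varepsilon=1$. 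This produces $g=T(f)\in E(Y)$ with $|g(y_n)|$ unbounded.
\item Derive the contradiction from boundedness of $K$: every $f\in E(X)\subset C(X)$ is bounded on $K$, and $T(f)(y_n)=\sum_i\lambda_i(y_n)f(x_i(y_n))$ with all $x_i(y_n)\in K$. It therefore suffices to show that the coefficients $\lambda_i(y_n)$ and the number of support points are uniformly bounded along a subsequence.
\end{enumerate}

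The hard part is this uniform control of the coefficients, and I would obtain it by a gliding-hump argument. If the sums $\sum_i|\lambda_i(y_n)|$ were unbounded, I would use correctness of $E(X)$ to build, inductively, functions $f_k\in E(X)$ with pairwise disjoint supports in small neighbourhoods of chosen support points $x_{i_k}(y_{n_k})\in K$, normalized so that $|T(f_k)(y_{n_k})|>k$ while $f_k$ vanishes on the earlier support points. Boundedness of $K$ is used to keep these supports discrete, so that the series $\sum_k f_k$ is locally finite. Its sum is then a continuous function on $X$ that is unbounded on the points $x_{i_k}(y_{n_k})\in K$, contradicting boundedness of $K$.

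The same construction, applied directly to an unbounded $h$ on $L$, shows that $L$ is bounded. Here $\omega$-density in $C(Y)$, rather than in $C^*(Y)$, is needed, because the approximated target $h$ is unbounded.
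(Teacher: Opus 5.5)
\textbf{Closedness.} Your argument is correct and is exactly the paper's: it uses Proposition~\ref{pro22}(iii) with $U=X\setminus K$.

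\textbf{Boundedness: a genuine gap.} Your reduction in step 3 is fine. If $s_n=\sum_i|\lambda_i(y_n)|$ were bounded along a subsequence, then $|T(f)(y_n)|\le c\,s_n$ with $c=\sup_K|f|$ would contradict $|T(f)(y_n)|>|h(y_n)|-1$. The gliding-hump argument meant to bound $s_n$, however, cannot work. It uses only correctness of $E(X)$ and boundedness of $K$, and these do not bound the coefficients. Take $X=\{0\}\cup\{1/n:n\ge 2\}$, $Y=\mathbb N$ discrete, $E(X)=C(X)$, $T(f)(1)=f(0)$ and $T(f)(n)=n\,(f(1/n)-f(0))$ for $n\ge 2$. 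The range contains all eventually-zero sequences, so $T$ is densely defined. Moreover $K=X$ is compact, $L=Y$, and $s_n=2n\to\infty$; only $\omega$-density fails in this example.

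The mechanics of your construction are also reversed. When $|\lambda_{i_k}(y_{n_k})|$ is large, the normalization $|T(f_k)(y_{n_k})|>k$ allows $f_k$ to be \emph{small}, not large, at $x_{i_k}(y_{n_k})$. And boundedness of $K$ does not keep the supports discrete. It says precisely that every locally finite family of open sets each meeting $K$ is finite, so the family you want to build never exists. In the example, the supports accumulate at $0$. Step 1 (continuity of $T$ at $0$) gives nothing uniform either, as you noticed.

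\textbf{The missing idea.} You never need to control the coefficients; let the target values absorb them. Since $|h(y_{n+1})|>|h(y_n)|+1$, the set $A=\{y_n\}$ is closed, discrete and $C$-embedded in $Y$: any values prescribed on $A$ are realized by $\phi\circ h$ for some $\phi\in C(\mathbb R)$. So there is $g\in C(Y)$ with $g(y_n)=t_n:=n\max\{1,s_n\}$. By $\omega$-density there is $f\in E(X)$ with $|T(f)(y_n)-t_n|<1$ for all $n$. Since $\supp(l_{y_n})\subset K$, this gives $t_n-1<T(f)(y_n)\le c\,s_n\le c\max\{1,s_n\}$, that is, $(n-c)\max\{1,s_n\}<1$, which fails once $n\ge c+1$. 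This is the paper's argument. Your phrase ``or a suitable rescaling'' in step 2 points this way, but the rescaling must be pointwise and depend on $s_n$, and it is $C$-embeddedness of $A$ that makes this possible.
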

\begin{proof}
We adopt the proof of \cite[Proposition 6.8.6]{vanMill}. The conclusion of Proposition \ref{pro22}(iii)
is exactly what has been used in [22, Proposition 6.8.6] for the proof of the assertion stating that $L$ is closed in $Y$.

We prove the second assertion. On the contrary, if $L$ is not bounded, then $L$ contains a discrete and $C$-embedded subset $A=\{y_n:n\geq 1\}$ in $Y$.
 As we've already observed, for every $y_n$ there are numbers $\lambda_i(y_n)$ such that 
$l_{y_n}(f)=\sum_{i=1}^{k_n}\lambda_i(y)f(x_i(y_n))$, $f\in E(X)$, where $\supp(l_{y_n})=\{x_1(y_n),x_2(y_n),..,x_{k_n}(y_n)\}$. 
For each $n$ define the number $t_n$ in the following way: 
$$t_n=n\cdot\big(\sum_{i=1}^{k_n}|\lambda_i(y_n)|\big){~}\mbox{if}{~}\sum_{i=1}^{k_n}|\lambda_i(y_n)|>1{~}\mbox{and}{~}t_n=n{~}\mbox{if}{~}\sum_{i=1}^{k_n}|\lambda_i(y_n)|\leq 1.$$
 
Since $E(Y)$ is dense in $C_p(Y)$, for every $y_n$ there is a function $f_n\in E(X)$ such that $l_{y_n}(f_n)\neq 0$. 
Because $A$ is $C$-embedded in $Y$, there is $g\in C(Y)$ with $g(y_n)=t_n$ for each $n$. Next, use the assumption that $E(Y)$ is $\omega$-dense in $C(Y)$ to find $f\in E(X)$ with
$$T(f)(y_n)=\sum_{i=1}^{k_n}\lambda_i(y_n)f(x_i(y_n))\in (t_n-1,t_n+1).$$
Since $f(K)$ is bounded in $\mathbb R$, there is $c>0$ with $f(K)\subset[-c,c]$. Finally, take $n-1>c$. Then
$$t_n-1\leq T(f)(y_n)\leq\sum_{i=1}^{k_n}|\lambda_i(y_n)||f(x_i(y_n))|\leq c\cdot\sum_{i=1}^{k_n}|\lambda_i(y_n)|.$$
Therefore, if
$\sum_{i=1}^{k_n}|\lambda_i(y_n)|\leq 1$, we have 
$$t_n-1\leq c\cdot\sum_{i=1}^{k_n}|\lambda_i(y_n)|\leq c<n-1=t_n-1.{~}\mbox{This is a contradiction}.$$  

If $\sum_{i=1}^{k_n}|\lambda_i(y_n)|> 1$, then 
$$t_n-1\leq c\cdot\sum_{i=1}^{k_n}|\lambda_i(y_n)|\leq c<(n-1)\sum_{i=1}^{k_n}|\lambda_i(y_n)|=t_n-\sum_{i=1}^{k_n}|\lambda_i(y_n)|.$$  So,
$t_n-1<t_n-\sum_{i=1}^{k_n}|\lambda_i(y_n)|$ which implies $\sum_{i=1}^{k_n}|\lambda_i(y_n)|<1$, again a contradiction.
\end{proof}

\begin{cor}\label{cor26}
Let $T:E_p(X)\to E_p(Y)$ be a densely defined linear continuous operator such that $E(Y)$ is $\omega$-dense in $C(Y)$.
If $X$ is a pseudocompact space, then so is $Y$. 
\end{cor}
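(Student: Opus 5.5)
Since $X$ is pseudocompact, $X$ is itself a closed and bounded subset of $X$: every $f\in C(X)$ is bounded on $X$. The plan is to apply Proposition \ref{pro25} with $K=X$. The set it produces is
$$L=\{y\in Y:\supp(l_y)\subset X\}.$$
This condition holds trivially for every $y$, because each support is a (finite) subset of $X$. Hence $L=Y$.

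Proposition \ref{pro25} states that $L$ is closed and bounded in $Y$. So $Y$ is a bounded subset of itself, meaning every $g\in C(Y)$ has bounded image $g(Y)$. This is exactly the definition of pseudocompactness of the Tychonoff space $Y$.

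The hypotheses of Proposition \ref{pro25} are the same as those of the corollary: $T$ is densely defined and $E(Y)$ is $\omega$-dense in $C(Y)$. So the only check needed is that "bounded" for $K=X$ coincides with pseudocompactness of $X$, which is immediate from the definitions. I expect no real obstacle. All the work was done in the proof of Proposition \ref{pro25}, where $\omega$-density is used to build $f\in E(X)$ with $T(f)(y_n)$ growing faster than the weights $\sum_i|\lambda_i(y_n)|$ allow when $f$ is bounded on $X$.
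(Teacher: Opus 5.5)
Your proposal is correct and matches the paper's intended argument: the corollary is stated without a separate proof, as the immediate instance $K=X$ of Proposition \ref{pro25}. In that instance $L=Y$, and $Y$ being bounded in itself is exactly the pseudocompactness of $Y$.
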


We say that a linear subspace $E\subset C^*(X)$ is {\em uniformly closed} if the limit of any uniformly converging sequence $\{f_n\}\subset E$ belongs to $E$.
Proposition \ref{pro27} below shows that a version of Corollary \ref{cor-main} remains true for arbitrary Tychonoff spaces $X$ and $Y$ provided $E(X)$
 is a correct and uniformly closed subspace of $C_p^*(X)$.
Note that, Proposition \ref{pro27} was established in \cite[Lemma 1.4.6]{bd} (see also \cite[Lemma 5.4]{Pelant}) when $X$ and $Y$ are metrizable spaces and $:C_p^*(X)\to C_p^*(Y)$ is a linear continuous surjection.

\begin{pro}\label{pro27}
Let $T:E_p(X)\to E_p(Y)$ be a densely defined linear continuous operator such that $E(X)$ is a correct uniformly closed subspace of $C^*(X)$ and $E(Y)$ is $\omega^*$-dense in $C^*(Y)$.
 Then for every compact metrizable set $K\subset X$, the set $L=\{y\in Y:\supp(l_y)\subset K\}$ is closed and bounded in $Y$.
\end{pro}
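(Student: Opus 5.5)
The proof has two parts: closedness of $L$, and then boundedness by contradiction, using a norm bound and a weak$^*$ compactness argument on $K$.

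\textbf{Closedness.} I would argue exactly as in Proposition \ref{pro25}. If $y\notin L$, then $\supp(l_y)$ meets the open set $U=X\backslash K$. By Proposition \ref{pro22}(iii) there is a neighborhood $V$ of $y$ with $\supp(l_z)\cap U\neq\varnothing$ for all $z\in V$, so $V\cap L=\varnothing$.

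\textbf{Boundedness: setting up.} Suppose $L$ is not bounded. As in the proof of Proposition \ref{pro25}, $L$ then contains a discrete $C$-embedded sequence $A=\{y_n:n\geq1\}$. Since $D(Y)=C^*(Y)$ now, I cannot prescribe unbounded values $t_n$ as before; compactness of $K$ and uniform closedness of $E(X)$ have to replace this. Write $l_{y_n}(f)=\sum_i\lambda_i(y_n)f(x_i(y_n))$ with $x_i(y_n)\in K$ (Proposition \ref{pro22}(ii)). Let $\mu_n$ be the corresponding finitely supported measure on $K$.

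\textbf{Step 1: a uniform bound.} Equip $E(X)$ with the sup norm; uniform closedness makes it a Banach space. Consider
$$S:E(X)\to\ell_\infty,\qquad S(f)=(T(f)(y_n))_n .$$
It is well defined because $T(f)\in C^*(Y)$. Its graph is closed, since sup-norm convergence implies pointwise convergence and $T$ is pointwise continuous. By the closed graph theorem, $|l_{y_n}(f)|\leq M\|f\|$ for all $n$. I then want to transfer this to a bound $\|\mu_n\|=\sum_i|\lambda_i(y_n)|\leq M'$ in $C(K)^*$, at least on the closure $F$ of $\pi_K(E(X))$ in $C(K)$. For this I would use correctness: on small disjoint neighborhoods of the support points, build elements of $E(X)$ of controlled norm taking values of prescribed sign. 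Alternatively, replace $C(K)$ by the Banach quotient $E(X)/\ker\pi_K$. I expect this transfer, controlling sup norms of the functions supplied by correctness, to be the main obstacle. If direct control fails, I would instead run the whole argument in the quotient space and use separability of $C(K)$, which holds because $K$ is compact metrizable, to get a separable norming subspace.

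\textbf{Step 2: weak$^*$ compactness and the contradiction.} The ball of radius $M'$ in $C(K)^*$ is weak$^*$ compact and metrizable, because $C(K)$ is separable. So some subsequence $\mu_{n_k}$ converges weak$^*$ to a measure $\mu$. Since $A$ is $C$-embedded and discrete, there is $g\in C^*(Y)$ with $g(y_{n_k})=(-1)^k$. The set $\{y_{n_k}\}$ is countable, so $\omega^*$-density of $E(Y)$ gives $f\in E(X)$ with
$$|T(f)(y_{n_k})-(-1)^k|<1/3\quad\text{for all }k.$$
However, $T(f)(y_{n_k})=\mu_{n_k}(f\restriction_K)\to\mu(f\restriction_K)$, so this sequence converges. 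That is impossible for a sequence alternating within $1/3$ of $\pm1$. Hence $L$ is bounded.
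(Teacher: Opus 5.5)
Your closedness argument is correct. Step 2 would also work, and is tidier than the paper's almost-disjoint-family argument, \emph{provided} you had a bound $\sum_i|\lambda_i(y_n)|\leq M'$. The gap is exactly the transfer you flag in Step 1, and neither proposed fix closes it.

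The closed graph theorem bounds $l_{y_n}$ only against $\|f\|_\infty$ taken over all of $X$. Correctness gives $f\in E(X)$ with $f(x)\neq 0$ and $f(X\backslash U)=0$, but it gives no control of $\|f\|_\infty$. So you cannot realize the sign pattern of the $\lambda_i(y_n)$ on $\supp(l_{y_n})$ by a function of norm $\leq 1$. The quotient $E(X)/\ker\pi_K$ does carry the bound $M$, but its norm can be far larger than $\|f|_K\|_\infty$. A continuous injection into the separable space $C(K)$ does not make it separable, so its dual ball need not be weak$^*$ sequentially compact. The paper's proof makes the same unjustified move: it picks $f\in E(X)$ with $\|f\|=1$ and $f(x_i(y))=\mathrm{sign}\,\lambda_i(y)$, claiming such $f$ exists ``because $E(X)$ is a correct space''.

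This cannot be patched, because both the bound and the statement itself fail. Let $X\subset\mathbb R^2$ consist of $K=\{(0,0)\}\cup\{(1/k,0):k\geq 2\}$ together with arcs $J_n$, $n\geq 1$. Each $J_n$ is made of the two vertical sides and the top of the rectangle $[\frac{1}{2n+1},\frac{1}{2n}]\times[0,1]$, with the bottom corners $q_n=(\frac{1}{2n+1},0)$ and $p_n=(\frac{1}{2n},0)$ removed. With $\mu_n$ normalized arc length on $J_n$, put
\[
E(X)=\Big\{f\in C^*(X):\int_{J_n}f\,d\mu_n=n\big(f(p_n)-f(q_n)\big)\ \text{for all } n\Big\}.
\]

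\textbf{$E(X)$ is uniformly closed and correct.} The bumps are as follows.
\begin{itemize}
\item Near an inner point of $J_n$: a bump with $\mu_n$-integral $0$.
\item Near $p_n$: a bump equal to $1$ at $p_n$ whose integral $n$ is concentrated low on the right side of $J_n$; near $q_n$ the analogous bump has integral $-n$.
\item Near the origin: the function equal to $1$ at $(0,0)$ and at all $p_n,q_n$ with $n\geq N$, equal to $g(t)$ at height $t$ on both vertical sides of these $J_n$, and $0$ elsewhere. Here $g(0)=1$, $\int_0^1 g=0$, and $g$ vanishes outside a short interval $[0,\delta]$.
\end{itemize}

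\textbf{The operator.} Let $Y=\mathbb N$ and $T(f)(n)=n(f(p_n)-f(q_n))=\int_{J_n}f\,d\mu_n$. Then $|T(f)(n)|\leq\|f\|$. Moreover $T$ maps $E(X)$ onto $\ell_\infty=C^*(\mathbb N)$: for $(a_n)\in\ell_\infty$ take $f(p_n)=a_n/n$ and $f(q_n)=f(0,0)=0$, keep these values low on $J_n$, and place the integral $a_n$ high on $J_n$.

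\textbf{Conclusion.} All hypotheses hold, and $\supp(l_n)=\{p_n,q_n\}\subset K$, so $L=\mathbb N$ is unbounded. Accordingly $\sum_i|\lambda_i(n)|=2n$, and every $f\in E(X)$ with $f(p_n)=1$, $f(q_n)=-1$ has $\|f\|\geq 2n$. Moreover $\pi_K(E(X))$ is dense in $C(K)$, so passing to its closure does not help. The quotient $E(X)/\ker\pi_K$ maps boundedly onto $\ell_\infty$, so it is not separable.

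What is missing is norm control in the bumps. Suppose $E(X)$ is strongly correct in the sense of Section \ref{section4}. Take disjointly supported bumps $f_i\colon X\to[0,1]$ with $f_i(x_i(y_n))=1$, and set $f=\sum_i\mathrm{sign}\,\lambda_i(y_n)\,f_i$. Then $\|f\|\leq 1$, so $\sum_i|\lambda_i(y_n)|\leq M$, and your Step 2 finishes the proof.
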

\begin{proof}
Let $E_u(X)$ be the space $E(X)$ with the topology of the uniform convergence. Since $E(X)$ is uniformly closed, $E_u(X)$ is a Banach subspace of $C_u^*(X)$. 
Consider $T$ as a linear operator between Banach spaces $T_u : E_u(X) \to C^*_u(Y)$.
Then the operator $T_u$ remains continuous according to the Closed Graph Theorem.
Hence  $||T_u||\leq m$ for some integer $m$.
By Proposition \ref{pro22}(ii), for every $y\in Y$ there are real numbers $\lambda_i(y)$ such that
$$T(f)(y)=\sum_{i=1}^{k}\lambda_i(y)f(x_i(y)){~}\mbox{for all}{~}f\in E(X),$$
 where $\supp(l_y)=\{x_1(y),..,x_k(y)\}$. We claim that 
$\sum_{i=1}^k|\lambda_i(y)|\leq m$ for all $y\in Y$. Indeed, take $f\in E(X)$ such that $||f||=1$ and $f(x_i(y))=sign \lambda_i(x_i(y))$ (such $f$ exists because $E(X)$ is a correct space). Then $|T(f)(y)|=T(f)(y)=\sum_{k=1}^k|\lambda_i(y)|\leq m$. Therefore, for all $y\in Y$ we have
$$\overline a(y)=\supp\{|l_y(f)|:f\in E{~}\mbox{and}{~}|f(x)|\leq 1{~}\mbox{for all}{~} x\in\supp(l_y)\}\leq m.$$  

Suppose $L$ is not bounded, so there is a discrete $C$-embedded subset $B=\{y_n\}_{n\geq 1}\subset L$ in $Y$. Let $\mathcal A$ be an uncountable almost disjoint family of infinite subsets of $\mathbb N$, i.e., $A_1\cap A_2$ is finite for all $A_1,A_2\in\mathcal A$. For every $A\in\mathcal A$ choose a function $g_A'\in C_p^*(Y)$ such that $g_A'(y_n)=m+1$ if $n\in A$ and $g_A'(y_n)=0$ if $n\not\in A$. Since $E(Y)$ is $\omega^*$-dense in $C^*(Y)$, for every $A\in\mathcal A$ there is $g_A\in E(Y)$ with $|g_A(x_n)-g_A'(x_n)|<1/2$, $n\in\mathbb N$. Next, choose $f_A\in E(X)$ such that $T(f_A)=g_A$. Since the family $\{f_A:A\in\mathcal A\}$ is uncountable and $C_u(K)$ is a separable Banach space, there are distinct $A_1,A_2\in\mathcal A$ with $||f_{A_1}|\restriction_K-f_{A_2}\restriction_K|| < 1$
 (here $f_{A_i}\restriction_K$ denote the restrictions of $f_{A_i}$ on $K$). Hence $|f_{A_1}(x)-f_{A_2}(x)|<1$ for all $x\in K$. Since $|T(f_{A_1}-f_{A_2})(y_n)|=|g_{A_1}(y_n)-g_{A_2}(y_n)|$ and $\overline a(y_n)\leq m$, $|g_{A_1}(y_n)-g_{A_2}(y_n)|\leq m$ for all $n$. On the other hand, if $n\in A_1\backslash A_2$, then $g_{A_1}(y_n)>m+1/2$ and $g_{A_2}(y_n)<1/2$. Hence, $|g_{A_1}(y_n)-g_{A_2}(y_n)|> m$ for all $n\in A_1\backslash A_2$, a contradiction.
\end{proof}

For any space $X$ and an integer $k$ let $[X]^k$ denote the space of all $k$-points subsets of $\overline X$ endowed with the Vietoris topology. It is well know that if $X$ is metrizable, then the Hausdorff's metric on $[X]^k$ generates the Vietoris topology of $[X]^k$, see for example \cite{mi}.  

\begin{lem}\label{lem28}
Let $X$ be a metrizable space such that all finite powers of $X$ have the property $\mathcal P$ and $k\in\mathbb N$ be a fixed integer, where $\mathcal P$ satisfies conditions $(a) - (c)$. Then all finite powers of $[X]^k$ also have the same property. 
\end{lem}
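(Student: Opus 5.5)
The plan is to realize $[X]^k$ as the image of a closed subset of $X^k$ under a finite-to-one closed map, and to transfer $\mathcal P$ by condition (c) on pieces, gluing with (a) and (b). (I read $[X]^k$ as the space of all $k$-point subsets of $X$.) Let
$$Z_k=\{(x_1,\dots,x_k)\in X^k: x_i\neq x_j \ \mbox{for}\ i\neq j\}$$
and let $q:Z_k\to [X]^k$ be $q(x_1,\dots,x_k)=\{x_1,\dots,x_k\}$. This map is continuous, surjective and exactly $k!$-to-one, and in the Hausdorff metric it is a local isometry up to constants. The set $Z_k$ is open in $X^k$, not closed, so condition (a) cannot be applied to it directly. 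Since $X^k$ is metrizable, $Z_k$ is an $F_\sigma$: for a metric $d$ on $X$ put
$$Z_{k,n}=\{(x_i): d(x_i,x_j)\geq 1/n \ \mbox{for}\ i\neq j\}.$$
Each $Z_{k,n}$ is closed in $X^k$, hence has $\mathcal P$ by (a). Correspondingly, $[X]^k=\bigcup_n F_n$, where $F_n=q(Z_{k,n})$ is the set of $k$-point sets whose points are pairwise at distance $\geq 1/n$. I will check that $F_n$ is closed in $[X]^k$: a Hausdorff limit of such sets, if it still has $k$ points, has mutual distances $\geq 1/n$.

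Next I show that $q\restriction Z_{k,n}:Z_{k,n}\to F_n$ is closed. Suppose $A^{(m)}=\{x^{(m)}_1,\dots,x^{(m)}_k\}\to A=\{a_1,\dots,a_k\}$ in $F_n$. For large $m$, every point of $A^{(m)}$ lies within $1/(3n)$ of $A$, and the balls $B(a_i,1/(3n))$ each contain exactly one point of $A^{(m)}$. So preimages converge after relabeling by permutations, and any sequence in a closed set $C\subset Z_{k,n}$ with images converging to $A$ has a subsequence (fixing one permutation out of the finitely many) converging in $C$. This gives closedness, since everything is metrizable.

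Now I want $\mathcal P$ to pass from $Z_{k,n}$ to $F_n$, but condition (c) goes the wrong direction: it gives $Y\in\mathcal P\Rightarrow X\in\mathcal P$ for a closed finite-to-one map $f:X\to Y$. So instead I would use that $q$ restricted to small pieces is a homeomorphism. Cover $F_n$ by the sets $W_{A}=\{B\in F_n: B\subset \bigcup_i B(a_i,1/(3n))\}$. Over each such set $q$ has a continuous section, namely the ordering inherited from the balls, so $W_A$ is homeomorphic to a subset of $Z_{k,n}$. To get closed pieces, I take a $\sigma$-discrete closed refinement $\mathcal A$ of this open cover of the metrizable space $F_n$ (Stone's theorem). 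Each member of $\mathcal A$ is homeomorphic, via a section $s$, to $s(A)$, which is closed in $Z_{k,n}$ because $q$ is continuous and $s(A)=q^{-1}(A)\cap(\mbox{product of balls})$. Here I must arrange that the product of balls intersected with $Z_{k,n}$ is clopen in $Z_{k,n}$, which holds for radius $1/(3n)$ with the separation $\geq 1/n$. So each member of $\mathcal A$ has $\mathcal P$ by (a), $F_n\in\mathcal P$ by (b2) and (b1), and $[X]^k\in\mathcal P$ by (b1).

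Finally, for the powers $([X]^k)^m$ I repeat the argument with $Z=Z_{k,n}^m\subset X^{km}$, which has $\mathcal P$ by hypothesis and (a), and the product map $q^m$, which is likewise locally a homeomorphism onto $F_n^m$ with closed local sections. The union $\bigcup_n F_n^m$ is $([X]^k)^m$, since the $F_n$ increase. The main obstacle I expect is the bookkeeping showing that the local pieces are closed and locally finite. If that fails, I fall back on (c) applied to the projection $([X]^k)^m\times\ldots$; in any case, the local-section approach avoids needing the reverse direction of (c).
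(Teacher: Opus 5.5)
Your strategy works and, like the paper, never uses condition (c). However, one justification in the key step is false and needs repair. Put $P=\prod_i B(a_i,1/(3n))$. Then $P\cap Z_{k,n}$ is \emph{not} clopen in $Z_{k,n}$. Take $X=\mathbb R$, $k=2$, $n=1$, $A=\{0,1\}$. For $m\ge 2$ the points $(-1/3+1/m,\,2/3+1/m)$ lie in $P\cap Z_{2,1}$, since their coordinates are at distance exactly $1$. They converge to $(-1/3,2/3)\in Z_{2,1}\setminus P$. So the reason you give for $s(C)$ being closed (for a member $C\subset W_A$ of the refinement) fails.

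The conclusion is still true, but the argument must use that $C$ is closed in $F_n$ and lies in $W_A$. Suppose $s(C)\ni z^{(m)}\to z$. Then $z\in Z_{k,n}$, and $q(z)=\lim q(z^{(m)})\in C\subset W_A$. Hence each $z_i$ lies in some open ball $B(a_j,1/(3n))$. Since $d(z_i,a_i)\le 1/(3n)$ and $d(a_i,a_j)\ge 1/n$ for $i\ne j$, necessarily $j=i$. Therefore $z\in P\cap q^{-1}(C)=s(C)$. Equivalently, $s(C)=q^{-1}(C)\cap\prod_i\{x:d(x,a_i)\le 1/(3n)\}$, an intersection of two closed sets. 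The same repair works coordinatewise for the pieces of $F_n^m$. With this fix your proof is complete. The paragraph showing that $q$ is closed on $Z_{k,n}$ and the final fallback to (c) are not needed.

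Compared with the paper: both arguments build a $\sigma$-locally finite closed cover of $[X]^k$ (and of its powers) by pieces homeomorphic to closed subsets of $X^k$, then apply (a) and (b).

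\begin{itemize}
\item The paper builds this cover explicitly from a $\sigma$-locally finite base $\mathcal B$. For $U_1,\dots,U_k\in\mathcal B$ with pairwise disjoint closures, $W(U_1,\dots,U_k)$ is homeomorphic to $\overline{U_1}\times\dots\times\overline{U_k}$, using \cite{mi} that $(x_1,\dots,x_k)\mapsto\{x_1,\dots,x_k\}$ is a closed map. So closedness in $X^k$ is automatic, and local finiteness of each subfamily $\mathcal W_{j(1)\dots j(k)}$ is checked directly. Powers are covered by products of members of $\mathcal W$.
\item You instead stratify by the closed sets $F_n$ and construct the local sections by hand from metric separation. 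Stone's theorem then supplies the $\sigma$-discrete closed refinement, with (b) applied on each $F_n$ and (b1) for the union.
\end{itemize}

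Your route avoids Michael's theorem and the local-finiteness bookkeeping. The cost is the extra stratification and a by-hand check that images of sections are closed, which is exactly where the slip occurred.
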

\begin{proof}
 Choose 
a $\sigma$-locally finite base $\mathcal B=\bigcup_{j=1}^\infty\mathcal B_j$, where each $\mathcal B_j$ is a locally finite family in $X$. Then for every $k$-tuple $(U_1,U_2,..,U_k)$ of elements of $\mathcal B$ with pairwise disjoint closures let $$W(U_1,U_2,..,U_k)=\{\{x_1,x_2,..,x_k\}\in [X]^k:x_i\in cl(U_i), i=1,2,..,k\}.$$ Every $W(U_1,U_2,..,U_k)$ is homeomorphic to 
$cl(U_1)\times cl(U_2)\times..\times cl(U_k)$. Indeed, let $[X]^{\leq k}$ be the space of all subsets of $X$ having at most $k$-points with the Vietoris topology, and consider the map $h:X^k\to [X]^{\leq k}$, defined by $h((x_1,x_2,..,x_k))=\{x_1,x_2,..,x_k\}$. According to \cite{mi}, $h$ is a continuous and closed surjection. Obviously, the restriction $h|\prod_{i=1}^kcl(U_i)$ is a closed bijection onto $W(U_1,U_2,..,U_k)$. So, it is a homeomorphism. 
Because $X^k\in\mathcal P$ and $\prod_{i=1}^kcl(U_i)$ is a closed subset of $X^k$,  condition $(a)$ implies that every $W(U_1,U_2,..,U_k)$ has the property $\mathcal P$. For every $k$-tuple $(j(1),j(2),..,j(k))$ denote by $\mathcal W_{j(1)j(2)..j(k)}$ the family of all 
$W(U_1,U_2,..,U_k)$ such that $U_i\in\mathcal B_{j(i)}$. Let us show that each $\mathcal W_{j(1)j(2)..j(k)}$ is a locally finite family in $[X]^k$. For any point $\widetilde x=\{x_1,x_2,..,x_k\}\in [X]^k$ take disjoint neighborhoods $V_i\in\mathcal B$ of $x_i$ in $X$, $i\leq k$, such that $V_i$ meets only finitely many elements of the family $\mathcal B_{j(i)}$. Then $W(V_1,V_2,..,V_k)$ is a neighborhood of $\widetilde x$ meeting only finitely many elements of
$\mathcal W_{j(1)j(2)..j(k)}$.
Therefore, the family $\mathcal W=\bigcup\{\mathcal W_{j(1)j(2)..j(k)}:(j(1),j(2),..,j(k)\in\mathbb N^k\}$ is a $\sigma$-locally finite cover of $[X]^k$ such that each element of $\mathcal W$ has the property $\mathcal P$.  Moreover, every product of $m$ elements of $\mathcal W$ is a closed subset of $X^{km}$, where $m$ is any integer. So, all finite products of elements of $\mathcal W$ have the property $\mathcal P$. On the other hand, since $\mathcal W$ is a $\sigma$-locally finite cover of $[X]^k$, every finite power $([X]^k)^m$ of 
$[X]^k$ has a $\sigma$-locally finite cover of elements which are products of $m$ many elements of $\mathcal W$. Therefore, all finite powers of $[X]^k$ have the property $\mathcal P$.
 \end{proof}
  The following three facts are well known: (i) $\dim X^k\leq k\cdot\dim X$, see \cite[Theorem 4.1.3 and Theorem 4.1.21]{en}; (ii) if a metrizable space $X$ has a $\sigma$-locally finite cover of closed sets each of dimension $\leq n$, then $\dim X\leq n$, see \cite[Theorem 3.1.15]{en}; if $\dim X\leq n$ and $F\subset X$ is closed, then $\dim F\leq n$. Using these three facts in the proof of 
	Lemma \ref{lem28}, we obtain the next statement.  
 \begin{cor}\label{cor29}
 Let $X$ be a finite-dimensional metrizable space and $k\in\mathbb N$ be a fixed integer. Then $\dim [X]^k\leq k\cdot\dim X$.
 \end{cor}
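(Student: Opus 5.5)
We mimic the proof of Lemma \ref{lem28}, replacing property $\mathcal P$ by the bound ``$\dim\leq k\cdot\dim X$'' and invoking the three facts listed just before the statement. Put $n=\dim X$.

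First fix a $\sigma$-locally finite base $\mathcal B=\bigcup_{j=1}^\infty\mathcal B_j$ of $X$, each $\mathcal B_j$ locally finite. For every $k$-tuple $(U_1,\dots,U_k)$ of elements of $\mathcal B$ with pairwise disjoint closures, form $W(U_1,\dots,U_k)$ as in the proof of Lemma \ref{lem28}. That proof shows $W(U_1,\dots,U_k)$ is homeomorphic to $\prod_{i=1}^k\cl(U_i)$, because the restriction of the closed map $h:X^k\to[X]^{\leq k}$ to this product is a closed bijection onto $W(U_1,\dots,U_k)$.

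Next we bound the dimension of each piece. By fact (i), $\dim X^k\leq k n$. Since $\prod_{i=1}^k\cl(U_i)$ is closed in $X^k$, fact (iii) gives $\dim\prod_{i=1}^k\cl(U_i)\leq kn$, and hence $\dim W(U_1,\dots,U_k)\leq kn$. Alternatively, one can apply the product inequality (i) directly to the factors $\cl(U_i)$, each of which has dimension $\leq n$ by (iii).

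Now we assemble the pieces. The proof of Lemma \ref{lem28} shows that the family $\mathcal W$ of all such sets $W(U_1,\dots,U_k)$ is a $\sigma$-locally finite cover of $[X]^k$. Each $W(U_1,\dots,U_k)$ is closed in $[X]^k$, being the image of a closed set under the closed map $h$ and contained in $[X]^k$. Moreover, $[X]^k$ is metrizable via the Hausdorff metric. Fact (ii) therefore yields $\dim[X]^k\leq kn$.

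The only delicate point is this closedness and covering claim. Covering holds because any $k$ distinct points have pairwise disjoint basic neighbourhoods with disjoint closures, by regularity. Closedness of $W$ in $[X]^k$ holds because $h(\prod_i\cl U_i)$ is closed in $[X]^{\leq k}$ and lies inside $[X]^k$. Both facts were already used implicitly in Lemma \ref{lem28}, so no new obstacle arises.
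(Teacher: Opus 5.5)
Your proposal is correct and is essentially the paper's own argument: the paper obtains Corollary~\ref{cor29} by rerunning the proof of Lemma~\ref{lem28} with the bound $\dim\leq k\cdot\dim X$ in place of $\mathcal P$, using the product inequality, monotonicity of $\dim$ on closed subspaces, and the $\sigma$-locally finite sum theorem for metrizable spaces. Your explicit check that each $W(U_1,\dots,U_k)$ is closed in $[X]^k$ (as the image under the closed map $h$ of a closed set lying in $[X]^k$) is a detail the paper leaves implicit, but fact (ii) does require it.
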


\section{Proofs of the results from the Introduction}\label{section3}
{\bf Proof of Theorem \ref{theorem-main}.} 
Suppose $T:E_p(X)\to E_p(Y)$ is a densely defined linear continuous operator such that $E(X)\subset D(X)$ is a correct subspace  and $E(Y)$ is $\omega^*$-dense in $D(Y)$.
\begin{claim}\label{claim1}
The sets  $Y_{k}=\{y\in Y: |\supp(l_y)|\leq k\}$ are closed in $Y$.  
\end{claim}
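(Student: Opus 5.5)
Since $Y$ is metrizable, it suffices to show that $Y_k$ is sequentially closed, but I would in fact prove the stronger, purely topological statement that the complement $Y\setminus Y_k=\{y:|\supp(l_y)|\geq k+1\}$ is open. This uses only Proposition \ref{pro22}. No density hypothesis beyond the one already built into ``densely defined'' is needed.

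Fix $y$ with $|\supp(l_y)|\geq k+1$ and choose $k+1$ distinct points $x_1,\dots,x_{k+1}\in\supp(l_y)$. Since $X$ is Hausdorff (indeed metrizable), there are pairwise disjoint open sets $U_1,\dots,U_{k+1}\subset X$ with $x_i\in U_i$. For each $i$ we have $\supp(l_y)\cap U_i\neq\varnothing$. Proposition \ref{pro22}(iii) then gives a neighborhood $V_i$ of $y$ such that $\supp(l_z)\cap U_i\neq\varnothing$ for every $z\in V_i$.

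Put $V=\bigcap_{i=1}^{k+1}V_i$, an open neighborhood of $y$. For $z\in V$, the support $\supp(l_z)$ meets each of the $k+1$ pairwise disjoint sets $U_i$, so it contains at least $k+1$ distinct points. Hence $V\subset Y\setminus Y_k$, which shows that $Y\setminus Y_k$ is open and $Y_k$ is closed.

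There is no real obstacle here. The only delicate point, the lower semicontinuity of supports expressed in Proposition \ref{pro22}(iii), has already been established, and it relied on the correctness of $E(X)$ through the equality $\supp(l_y)=\supp(\widetilde l_y)$. Note also that the case $k=0$ is consistent with this argument: $Y_0=\varnothing$ by Proposition \ref{pro22}(i).
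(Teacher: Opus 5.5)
Your proof is correct and is essentially the paper's proof. The paper takes $k+1$ points of $\supp(l_y)$ for $y\notin Y_k$, separates them by pairwise disjoint neighborhoods $U_i$, and applies Proposition \ref{pro22}(iii) to get a neighborhood $V$ of $y$ contained in $Y\setminus Y_k$; you do the same, only writing out the intersection $V=\bigcap_i V_i$ explicitly.
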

This claim follows from Proposition \ref{pro22}. Indeed, if $y\not\in Y_k$ for some $k$, then $\supp(l_y)$ contains at least $k+1$ points $x_1, x_2,\dots, x_{k+1}$ and 
let each $U_i$ be a neighborhood of $x_i$ with $U_i\cap U_j=\varnothing$ for $i\neq j$. Then by Proposition \ref{pro22}(iii), there exists a neighborhood $V$ of $y$ such that $\supp(l_z)\cap U_i\neq\varnothing$ for each $i$. So, $V\subset Y\backslash Y_k$.   

Since $Y$ is perfectly normal, each set $M_k=Y_k\backslash Y_{k-1}$ is a union of countably many closed subsets $M_{kn}$ of $Y$. Denote again
 $$a(y)=\supp\{|l_y(f)|:f\in E{~}\mbox{and}{~}|f(x)|<1{~}\mbox{for all}{~} x\in\supp(l_y)\}.$$
 The representation of $l_y$ from Proposition \ref{pro22}(ii) implies that $a(y)<\infty$ for every $y\in Y$. Now, consider the sets 
$$F_p=\{y\in Y: a(y)\leq p\}{~}\mbox{and}{~}F_{kp}=M_{k}\cap F_p, p\in\mathbb N.$$

\begin{claim}\label{claim2}
Every set $F_{kp}$ is closed in $M_k$.
\end{claim}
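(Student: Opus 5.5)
The plan is to show that the complement $M_k\backslash F_{kp}$ is open in $M_k$. Concretely, for each $y\in M_k$ with $a(y)>p$ we find a neighborhood $V$ of $y$ in $Y$ such that $a(z)>p$ for all $z\in V\cap M_k$. (Here the ``$\supp$'' in the definition of $a(y)$ is read as the supremum.) In effect this is lower semicontinuity of $a$ restricted to $M_k$. Two ingredients drive it: the fact that on $M_k$ the support has exactly $k$ points, and Proposition \ref{pro22}(iii).

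\textbf{Step 1.} Fix $y\in M_k$ with $a(y)>p$. By Proposition \ref{pro22}(i),(ii) we have $\supp(l_y)=\{x_1,\dots,x_k\}$ with exactly $k$ distinct points. By the definition of $a(y)$ as a supremum, there is $f\in E(X)$ with $|f(x_i)|<1$ for all $i\le k$ and $|l_y(f)|=|T(f)(y)|>p$.

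\textbf{Step 2.} Since $f$ is continuous and $X$ is Hausdorff, choose pairwise disjoint open neighborhoods $U_i$ of $x_i$ with $|f(x)|<1$ for all $x\in U_i$. Apply Proposition \ref{pro22}(iii) to each $U_i$ and intersect the resulting $k$ neighborhoods. Also intersect with the open set $\{z:|T(f)(z)|>p\}$, which is open because $T(f)\in C(Y)$. This gives a neighborhood $V$ of $y$ such that for every $z\in V$:
\begin{itemize}
\item[(1)] $\supp(l_z)\cap U_i\neq\varnothing$ for each $i\le k$;
\item[(2)] $|T(f)(z)|>p$.
\end{itemize}

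\textbf{Step 3.} Let $z\in V\cap M_k$. Then $|\supp(l_z)|=k$. By (1), $\supp(l_z)$ meets each of the $k$ pairwise disjoint sets $U_i$. Counting forces $\supp(l_z)\subset\bigcup_i U_i$, with exactly one point in each $U_i$. Hence $|f(x)|<1$ for all $x\in\supp(l_z)$, so $f$ is admissible in the supremum defining $a(z)$. Therefore $a(z)\ge|l_z(f)|=|T(f)(z)|>p$ by (2). Thus $V\cap M_k\subset M_k\backslash F_p$, and hence $F_{kp}=M_k\cap F_p$ is closed in $M_k$.

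The only delicate point is Step 3. We must ensure that the support of $l_z$ does not escape $\bigcup U_i$, since otherwise the bound $|f|<1$ on the support would fail. This is exactly why the claim is stated relative to $M_k$ rather than $Y$. The points of $Y_{k-1}$ could have smaller supports (harmless), but points of $Y\backslash Y_k$ could have extra support points outside $\bigcup U_i$. Restricting to $|\supp(l_z)|=k$ and using the pigeonhole argument with disjoint $U_i$ resolves this.
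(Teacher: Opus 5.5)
Your proof is correct and follows the paper's argument. Both take a witness $f$ for $a(y)>p$, separate the $k$ support points by disjoint neighborhoods, and combine Proposition \ref{pro22}(iii) with the continuity of $T(f)$ at $y$; the condition $|\supp(l_z)|=k$ on $M_k$ then forces $\supp(l_z)$ into the union of those neighborhoods. The one difference is that the paper replaces $f$ by an auxiliary function $g=\sum_{i\in J}g_i$, built using correctness of $E(X)$ so that it agrees with $f$ on $\supp(l_y)$, whereas you just shrink the $U_i$ by continuity of $f$ so that $|f|<1$ on $\bigcup_i U_i$. That bound on $\bigcup_i U_i$ is all the paper's argument actually uses, so your route is slightly more direct.
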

Suppose $y\in M_{k}\backslash F_{kp}$ and $\supp(l_y)=\{x_1(y),x_2(y),..,x_k(y)\}$. Then $a(y)>p$ and there is $f\in E(X)$ such that $|f(x_i(y))|<1$ for all $i$ and $|l_y(f)|>p$. Take disjoint neighborhoods $U_i$ of each $x_i(y)$ and let $J=\{i: f(x_i(y))\neq 0\}$. For every $i\in J$ there is $g_i\in E(X)$ such that $g_i(x_i(y))=f(x_i(y))$ and $g_i(Y\backslash U_i)=0$ (recall that $E(X)$ is a correct subspace of $D(X)$). Let $V_i=U_i$ if $i\not\in J$ and $V_i$ be an open subset of $U_i$  if $i\in J$ such that $|g_i(x)|<1$ for all $x\in V_i$. Then the function $g=\sum_{i\in J}g_i\in E(X)$ and has the following properties: 
$|g(x)|<1$ for all $x\in X$ and $g|\supp(l_y)=f|\supp(l_y)$. Hence, $|l_y(f)|=|l_y(g)|=|T(g)(y)|>p$ and let $W_1$ be a neighborhood of $y$ in $Y$ such that $|T(g)(z)|>p$ for all $z\in W_1$. According to Proposition \ref{pro22}(iii), there exists a neighborhood $W_2$ of $y$ in $Y$ such that
$\supp(l_z)\cap V_i\neq\varnothing$ for all $z\in W_2$ and $i=1,2,..,k$. Obviously, $W=W_1\cap W_2\cap M_k$ is a neighborhood of $y$ in $M_k$ and 
for every $z\in W$ we have $\supp(l_z)=\{x_1(z),x_2(z),..,x_k(z)\}$ with $x_i(z)\in V_i$ and $|l_z(g)|>p$. Moreover, $|g(x)|<1$ for all $x\in\supp(l_z)$ with
$z\in W$ implies $a(z)>p$. Therefore,  $F_{kp}$ is closed in $M_k$.

Claim 2 yields that each of the sets $M_{kpn}=F_{kp}\cap M_{kn}$ is closed in $Y$ and, obviously, $Y=\bigcup_{k,p,n\geq 1}M_{kpn}$.
For every $k\geq 1$ consider the maps  
$S_{kpn}:M_{kpn}\to [X]^k$, $S_{kpn}(y)=\supp(l_y)$. 
 It follows from Proposition \ref{pro22}(ii) that $S_{kpn}$ are continuous.
\begin{claim}\label{claim3}
All maps $S_{kpn}: M_{kpn}\to [X]^k$ are closed.
\end{claim}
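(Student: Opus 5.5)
We argue by sequences, since $Y$ and $[X]^k$ are metrizable. Suppose $F\subset M_{kpn}$ is closed and $\{x\}\in$ or rather $\widetilde x=\{x_1,\dots,x_k\}\in[X]^k$ is a limit of $S_{kpn}(y_m)$ with $y_m\in F$. We must show $\widetilde x\in S_{kpn}(F)$. It suffices to show that $\{y_m\}$ has a convergent subsequence: its limit $y$ lies in $F$ because $F\subset M_{kpn}$ is closed in $Y$, and continuity of $S_{kpn}$ gives $S_{kpn}(y)=\widetilde x$. Passing to a subsequence, we may assume the $y_m$ are pairwise distinct. Indeed, if some point repeats infinitely often we are done. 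Moreover, $K=\widetilde x\cup\bigcup_m S_{kpn}(y_m)$ is a compact subset of $X$, being a convergent sequence of finite sets together with its limit in the Hausdorff metric.

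Suppose, for a contradiction, that $\{y_m\}$ has no convergent subsequence. Then, after thinning, $A=\{y_m\}$ is closed and discrete in the metrizable space $Y$, hence $C$-embedded. We also choose pairwise disjoint neighborhoods $U_1,\dots,U_k$ of $x_1,\dots,x_k$. Since $\supp(l_{y_m})\to\widetilde x$ in $[X]^k$, for large $m$ each $U_i$ contains exactly one point $x_i(y_m)$ of $\supp(l_{y_m})$.

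The contradiction comes from the bound $a(y_m)\le p$ together with $\omega^*$-density. Pick a bounded $g'\in C^*(Y)$ with $g'(y_m)=(p+2)(-1)^m$. By $\omega^*$-density of $E(Y)$ there is $f\in E(X)$ with $|T(f)(y_m)-g'(y_m)|<1$ for all $m$, so $|T(f)(y_{m+1})-T(f)(y_m)|>2p+2$.

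On the other hand, $f$ is continuous at each $x_i$, so $f(x_i(y_m))\to f(x_i)$; in particular $|f|\le c$ on $K$ for some $c$. Write $T(f)(y_m)=\sum_i\lambda_i(y_m)f(x_i(y_m))$. The condition $a(y_m)\le p$, combined with correctness (which lets us realize prescribed values with $|\cdot|<1$ on the support, as in the proof of Claim \ref{claim2}), gives $\sum_i|\lambda_i(y_m)|\le p$. Passing to a subsequence, $\lambda_i(y_m)\to\lambda_i$, and hence $T(f)(y_m)\to\sum\lambda_i f(x_i)$ converges. This contradicts $|T(f)(y_{m+1})-T(f)(y_m)|>2p+2$.

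The step I expect to be the main obstacle is this last one: the coefficient sequence depends only on $y_m$, but $f$ was chosen afterwards. The resolution is to first extract a subsequence along which the coefficients $\lambda_i(y_m)$ converge, which is possible because they are uniformly bounded by $p$. Only then do we apply $\omega^*$-density to that fixed subsequence, which is still closed discrete and hence $C$-embedded, with the alternating target values. Since convergence of $\sum_i\lambda_i(y_m)f(x_i(y_m))$ then follows for every $f\in E(X)$, the contradiction goes through. So $\{y_m\}$ has a convergent subsequence and $S_{kpn}$ is closed.
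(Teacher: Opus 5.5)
\textbf{Verdict.} Your argument is correct, but it reaches the contradiction by a different route from the paper's.

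\textbf{The paper's route.} The paper never evaluates $a(y)$. It takes an uncountable almost disjoint family $\mathcal B$ of infinite subsets of $\mathbb N$. It uses $\omega^*$-density to get $g_B\in E(Y)$ close to $(p+1)\chi_B$ along $\{y_m\}$, and lifts these to $f_B\in E(X)$. Separability of $\mathbb R^k$ then gives $B(1)\neq B(2)$ with $|f_{B(1)}-f_{B(2)}|<1$ at $x_1,\dots,x_k$, hence on $\supp(l_{y_m})$ for all large $m$. The bound $a(y_m)\le p$, used directly as a supremum over test functions, gives $|g_{B(1)}(y_m)-g_{B(2)}(y_m)|\le p$. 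This contradicts the gap of at least $p+1/3$ on the infinite set $B(1)\setminus B(2)$.

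\textbf{Your route.} You use correctness to turn $a(y_m)\le p$ into $\sum_i|\lambda_i(y_m)|\le p$. You pass to a subsequence along which the coefficient vectors converge in $[-p,p]^k$, and observe that then $l_{y_m}(f)\to\sum_i\lambda_i f(x_i)$ for \emph{every} $f\in E(X)$. A single alternating target function then contradicts this.

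\textbf{What each buys.} Your route is shorter and exposes the real mechanism, namely pointwise convergence of the functionals $l_{y_m}$. It needs only one application of $\omega^*$-density and no uncountable pigeonhole. The paper's route treats $a(y_m)\le p$ as a black box. It is the same Baars--de Groot device as in Proposition~\ref{pro27}, where separability of $C_u(K)$ plays the role of that of $\mathbb R^k$.

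\textbf{Two presentation points.}
\begin{enumerate}
\item Write the steps in the order of your last paragraph: first extract the coefficient subsequence, then choose $f$. With $f$ chosen first, as in your fourth paragraph, the alternation can be lost on passing to a subsequence.
\item Your reduction to convergent subsequences uses metrizability of $Y$. That is fine for Theorem~\ref{theorem-main}. The paper phrases it via accumulation points so that Claim~\ref{claim3} can be reused for perfectly normal $Y$ in Theorem~\ref{th9}. Your argument goes through unchanged in that formulation, since closed discrete sets in normal spaces are $C^*$-embedded.
\end{enumerate}
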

Since $[X]^k$ is a metrizable space, it suffices to show that if $\{y_m\}$ is a sequence in $M_{kpn}$ and $S_{kpn}(y_m)$ converges in $[X]^k$, then
$\{y_m\}$ has an accumulation  point in $M_{kpn}$. Striving for a contradiction, suppose there is such a sequence $\{y_m\}\subset M_{kpn}$ without any accumulation point. Then $A=\{y_m:m\geq 1\}$ is a discrete subset of $Y$ and $S_{kpn}(y_m)$ converges to some $P=\{x_1,x_2,..,x_k\}\in [X]^k$. Hence, $S_{kpn}(y_m)=\{x_1(y_m),x_2(y_m),..,x_k(y_m)\}$  such that each of the sequences $\{x_i(y_m)\}_{m\geq 1}$ converges to $x_i$, $i=1,2,..,k$. So, 
$K=\bigcup_{i=1}^k\{x_i\}\cup\{x_i(y_m)\}_{m\geq 1}$ is a compact subset of $X$. According to Lemma \ref{lem24}, there is a densely defined linear continuous operator
$\varphi:\pi_K(E(X))_p\to\pi_A(E(Y))_p$, where $\pi_K:E(X)\to C(K)$ and $\pi_A:E(Y)\to C(A)$ are the restriction maps.
Let $\mathcal B$ be an uncountable almost disjoint family of infinite subsets of $\mathbb N$, i.e., $B_1\cap B_2$ is finite for distinct $B_1,B_2\in\mathcal B$. For any $B\in\mathcal B$ let $g_B'\in C_p^*(Y)$
be such that $g_B'(y_m)=p+1$ if $m\in B$ and $g_B'(y_m)=0$ if $m\not\in B$. Since $E(Y)$ is $\omega^*$-dense in $D(Y)$, there is $g_B\in E(Y)$ such that
$|g_B(y_m)-g_B'(y_m)|<1/3$ for all $B\in\mathcal B$ and $m\in\mathbb N$.
Then choose $f_B\in\pi(E)$ such that $\varphi(f_B)=g_B$. Put 
$s_B=(f_B(x_1),..,f_B(x_k))\in\mathbb R^k$. Since $\mathbb R^k$ is separable and $\mathcal B$ is uncountable, there are distinct $B(1),B(2)\in\mathcal B$ with 
$|f_{B(1)}(x_i)-f_{B(2)}(x_i)|<1$ for $i=1,2,..,k$. Because $f_{B(1)}$ and  $f_{B(2)}$ are continuous and the sequences $\{x_i(y_m)\}_{m\geq 1}$ converge to $x_i$, we can find $m_0$ such that $|f_{B(1)}(x)-f_{B(2)}(x)|<1$ for all $x\in S_{kpn}(y_m)$ and $m\geq m_0$. 
Then $\varphi(f_{B(1)}-f_{B(2)})=g_{B(1)}-g_{B(2)}$  and because $a(y_m)\leq p$, we have $|g_{B(1)}(y_m)-g_{B(2)}(y_m)|\leq p$ for all $m\geq m_0$.
On the other hand 
$B(1)\backslash B(2)$ is infinite and for every $m\in B(1)\backslash B(2)$ we have $g_{B(1)}(y_m)\geq p+2/3$ and $g_{B(2)}(y_m)\leq 1/3$. 
Hence, $|g_{B(1)}(y_m)-g_{B(2)}(y_m)|\geq p+1/3$ for some $m\geq m_0$. This contradiction completes the proof of Claim 3.
  
\begin{claim}\label{claim4}
The cardinality of all fibers of the map $S_{kpn}: M_{kpn}\to [X]^k$ is $\leq k$.
\end{claim}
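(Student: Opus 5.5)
The plan is to reduce Claim 4 directly to Lemma \ref{lem23}. Fix a point $P=\{x_1,x_2,\dots,x_k\}\in [X]^k$ in the image of $S_{kpn}$. The fiber $S_{kpn}^{-1}(P)$ consists of those $y\in M_{kpn}$ with $\supp(l_y)=P$. In particular, every such $y$ satisfies $\supp(l_y)\subset P$.

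Consequently, the fiber is contained in the set
$$\{y\in Y:\supp(l_y)\subset P\},$$
where $P$ is a finite subset of $X$ with exactly $k$ points. Lemma \ref{lem23} applies to the densely defined linear continuous operator $T$ and the finite set $K=P$. It says that this set has cardinality at most $|P|=k$. Hence $|S_{kpn}^{-1}(P)|\leq k$, which is the assertion of Claim 4.

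The only thing to check is that the hypotheses of Lemma \ref{lem23} are met. We need $E(X)$ correct and $E(Y)$ dense in $D_p(Y)$. Density follows because $\omega^*$-density in $D(Y)$ implies density in $D_p(Y)$ (Example \ref{ex-1}(b); for $D(Y)=C(Y)$ one also uses that $C^*(Y)$ is pointwise dense in $C(Y)$, but Lemma \ref{lem23} only requires density, which is part of the definition of densely defined anyway).

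I expect no real obstacle. The work is entirely in Lemma \ref{lem23}, which in turn rests on correctness via the well-definedness of the restriction operator (Remark \ref{r_key}). If one wished to be self-contained, one would repeat that argument: $k+1$ points of the fiber would give a dense subspace of $C_p$ of a $(k+1)$-point set that is the image of the at most $k$-dimensional space $\pi_P(E(X))$, which is impossible.
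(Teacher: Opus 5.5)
Your proposal is correct and follows the paper's argument exactly: the fiber over $P=S_{kpn}(z)$ is contained in $\{y\in Y:\supp(l_y)\subset P\}$, and Lemma \ref{lem23} bounds that set's cardinality by $|P|=k$. Your check of the hypotheses is also right, since density of $E(Y)$ in $D_p(Y)$ is already part of the definition of a densely defined operator.
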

Let $z\in M_{kpn}$, $S_{kpn}(z)=\{x_1(z), x_2(x), \dots, x_k(z)\}$ and $A(z)=\{y\in M_{kpn}:S_{kpn}(y)=S_{kpn}(z)\}$ is the fiber of $S_{kpn}$. Then,
by Lemma \ref{lem23}, the cardinality of the set $A(z)$ is $\leq k$.

To finish the proof of Theorem \ref{theorem-main} observe that 
$$Y=\bigcup\{M_{kpn}:n, k, p\in\mathbb N\}$$ 
such that each $M_{kpn}$ is a closed subset of $Y$. Let us show that all finite products $\prod_{i=1}^m M_{k_ip_in_i}\in\mathcal P$.
 To this end, observe that, according to Claims 3 and 4, all maps $S_{kpn}:M_{kpn}\to [X]^k$ are closed with cardinality of each fiber $\leq k$. So, all product maps 
$\prod_{i=1}^m S_{k_ip_in_i}:\prod_{i=1}^m M_{k_ip_in_i}\to\prod_{i=1}^m [X]^{k_i}$ are also closed having cardinality of each fiber $\leq k_1\cdot k_2\cdot...\cdot k_m$, see \cite[Theorem 3.7.9]{en1}. Since $\prod_{i=1}^m [X]^{k_i}\in\mathcal P$ (Lemma \ref{lem28}) and $\mathcal P$ satisfies condition $(c)$, all product of the form $\prod_{i=1}^m M_{k_ip_in_i}$ have the property $\mathcal P$.
Finally, because for every $m$ the power $Y^m$ is a union of countable many closed sets of the form $\prod_{i=1}^m M_{k_ip_in_i}$, by the Countable Sum Theorem, $Y^m\in\mathcal P$. Hence, all finite powers of $Y$ have the property $\mathcal P$.
\hfill $\Box$ 

{\bf Proof of Corollary \ref{cor-main}.} Since $\sigma$-compactness satisfies all conditions (a) - (c), Corollary \ref{cor-main}(i) follows from Theorem \ref{theorem-main}. Concerning Corollary \ref{cor-main}(ii), according to Corollary \ref{cor26}, $Y$ is pseudocompact. On the other hand, because $E(Y)$ is $\omega$-dense in $C(Y)$, it is also $\omega^*$-dense in $D(Y)$.
 So, by Theorem \ref{theorem-main}, $Y$ is also $\sigma$-compact. Therefore, $Y$ is compact. $\Box$  

{\bf Proof of Corollary \ref{cor2}.} Suppose both $X$ and $Y$ are metrizable spaces such that all finite powers of $X$ are weakly infinite- dimensional and $T:E_p(X)\to E_p(Y)$ is a densely defined linear continuous surjection. 
By Claim 1 the sets $Y_i=\{y\in Y:|supp(l_y)|\leq i\}$ are closed in $Y$.  Obviously, the sequence $\{Y_i\}$ is increasing and $Y=\bigcup_{i=1}^\infty Y_i$. Consider the linear operator $\pi_{i}\circ T:E_p(X)\to E_p(Y_{i})$, where $\pi_{i}:C_p(Y)\to C_p(Y_{i})$ is the restriction map. Since $\pi_{i}$ is continuous and linear surjection, $\pi_{i}\circ T$ is densely defined whose image is $\omega^*$-dense in $D(Y_i)$.
Because the weak infinite-dimensionality satisfies conditions $(a) - (c)$ and all finite powers of $X$ are weakly infinite-dimensional, according to Theorem \ref{theorem-main} all finite powers of any $Y_{i}$ are also weakly infinite-dimensional. Then $Y^m=\bigcup_{(i_1,i_2,..,i_m)}Y_{i_1}\times...\times Y_{i_m}$ for every $m$. Now, take $i_0$ such that 
$i_k\leq i_0$ for all $k=1,2,..,m$. 
Hence, $Y_{i_1}\times...\times Y_{i_m}$ is a closed subset of $Y_{i_0}^m$. Because 
$Y_{i_0}^m$ is weakly infinite-dimensional, so is the product $Y_{i_1}\times...\times Y_{i_m}$. Therefore, every finite power of $Y$ is a countable sum of weakly infinite-dimensional closed sets.
So, by \cite[Theorem 6.1.6]{en} all finite powers of $Y$ are also weakly infinite-dimensional. 
The case when all powers of $X$ have the property $C$ is similar. In this case we use \cite[Theorem 4.1]{gv} instead of \cite[Theorem 6.1.6]{en}. $\Box$

{\bf Proof of Theorem \ref{th9}.} We follow the proof of Theorem \ref{theorem-main}. Since in this case $Y$ is perfectly normal, every $M_k$ is a countable union of closed set 
$M_{kn}\subset Y$. So, the proof of Claims 2 and 3 remains valid. The proof of Claim 4 also works because all maps  $S_{kpn}: M_{kpn}\to [X]^k$ are continuous and closed such that $[X]^k$ is metrizable and $M_{kpn}$ are perfectly normal. Because all finite powers of $X$ have the property $\mathcal P$ and $\mathcal P$ satisfies conditions $(a) - (c)$, $[X]^k\in\mathcal P$ (see Lemma \ref{lem28}).
On the other hand, both zero-dimensionality and strong countable-dimensionality satisfy condition $(c)$ for closed maps with normal domains and metrizable ranges.
 So, all sets $M_{kpn}$ have the property $\mathcal P$.
Finally, by the Countable Sum Theorem for $\dim$ and strong countable-dimensionality \cite[Theorem 3.1.8 and Theorem 5.2.14]{en}, $Y\in\mathcal P$.

{\bf Proof of Theorem \ref{th2}.} We follow the notations from the proof of \ref{theorem-main}. Suppose $T:E_p(X)\to E_p(Y)$ is a densely defined linear continuous surjection, where $E(X)$ is a correct subspace of $D(X)$ and $E(Y)$ is a dense linear subspace of $D_p(Y)$. We have two possibilities, either $Y$ is metrizable and $\mathcal P$ is any property satisfying conditions $(a) - (c)$ or $Y$ is perfectly normal and $\mathcal P$ is zero-dimensionality or strong countable-dimensionality. In both cases $Y$ is perfectly normal and $Y$ is the union of the closed sets $M_{kpn}$. 
Then $K$ is also perfectly normal and all sets $K_{kpn}=M_{kpn}\cap K$ are closed in $K$ with $K=\bigcup_{k,p,n\geq 1}K_{kpn}$. Observe that in the first case $K$ is a compact metrizable space. Hence, in this case, if every  $K_{kpn}$ has the property $\mathcal P$, then so does $K$. In the second case $K$ is normal and both zero-dimensionality and strong countable-dimensionality satisfy the Countable Sum Theorem for normal spaces (see \cite[Theorem 3.1.8 and Theorem 5.2.14]{en}). Therefore, in both cases,
it suffices to show that each $K_{kpn}$ has the property $\mathcal P$. To this end, we consider the maps
 $$L_{kpn}:K_{kpn}\to [X]^k,\,\, \mbox{where}\,\, L_{kpn}(y)=\supp(l_y).$$ 
To show that all $L_{kpn}$ are closed maps, as in Claim 3, we need to show that if $\{y_m\}$ is a sequence in $K_{kpn}$ and $L_{kpn}(y_m)$ converges in $[X]^k$, then
$\{y_m\}$ has an accumulation  point in $K_{kpn}$. But in our case this is true because each $K_{kpn}$ is supposed to be a countably compact space, so the maps $L_{kpn}$ are closed.
 Moreover, by Lemma \ref{lem23}, the fibers of $L_{kpn}$ have cardinality $\leq k$.
 Then the arguments from the proof of Theorem \ref{theorem-main} and Theorem \ref{th9} ensure that each $K_{kpn}$ has the property $\mathcal P$. So, does $K$.
\hfill $\Box$ 

{\bf Proof of Theorem \ref{th11}}. Obviously, this is true if $\dim X=\infty$, so let $\dim X<\infty$. Following the notations from the proof of Theorem \ref{theorem-main}, 
for every $k=2,3,..,m$ 
 the sets $M_{kpn}=F_{kp}\cap M_{kn}$ are closed in $Y$ and 
$Y=\bigcup_{k=1}^m\bigcup_{p,n\geq 1}M_{kpn}$. By Claims 3-4, the maps $S_{kpn}: M_{kpn}\to [X]^k$ are perfect with finite fibers. According to Corollary \ref{cor29}, $\dim [X]^k\leq k\cdot\dim X$. Therefore, by the Dimension-Lowering Mapping Theorem \cite[Theorem 3.3.10]{en},
 $\dim M_{kpn}\leq k\cdot\dim X$ for all $n,p\geq 1$ and $k\leq m$.
So, by Countable Sum Theorem \cite[Theorem 3.1.8]{en}, $\dim Y\leq m\cdot\dim X$.

\section{Scattered-like properties $\mathcal P$}\label{section4}
In the last part of our paper we deal with several classes of Tychonoff topological spaces.
The arguments are independent of those which were developed in Section \ref{section3}.
We use repeatedly only Lemma \ref{lem23} which is true for all Tychonoff spaces.
Below we give relevant definitions, and summarize the relationships among these concepts.

\begin{df}\label{def:basic1} Let $X$ be a Tychonoff topological space.
\begin{itemize} 
    \item[\ding{192}] $X$ is called scattered if every subset $A$ of $X$ has an isolated (in $A$) point.
    \item[\ding{193}] $X$ is  strongly $\sigma$-scattered if $X$ is a union of countably many closed scattered subsets $X_n\subset X$.
		\item[\ding{194}] $X$ belongs to the class $\Delta_1$ if any sequence $\{A_n\}_{n\in \mathbb N}$ of disjoint countable subsets $A_n\subset X$ has a point-finite open expansion in $X$,
		i.e., there is a sequence of open sets $U_n\subset X$ such that $A_n\subset U_n$ for every $n\in \mathbb N$ and the family $\{U_n\}$ is point-finite in $X$.
		\item[\ding{195}] $X$ is said to have the property $(\kappa)$ if any sequence $\{K_n\}_{n\in \mathbb N}$ of disjoint finite subsets of $X$ has a subsequence $\{K_{n_k}\}$ admitting a point-finite open expansion. 
		\end{itemize}
\end{df}

The following implications hold: $\circled{1}$ $\Rightarrow$ $\circled{2}$ $\Rightarrow \circled{3} \Rightarrow \circled{4}$, and no implication can be reversed.
The class $\Delta_1$ was introduced and investigated in \cite{KKL}. In particular, $\circled{2} \Rightarrow \circled{3}$ follows from the results of  \cite{KKL}, 
because every scattered space is in $\Delta_1$, and the class $\Delta_1$ is invariant under countable unions of closed subspaces.
The property $(\kappa)$ was introduced by Sakai \cite{sa} who proved that $X$ has the property $(\kappa)$ if and only if $C_p(X)$ is $\kappa$-Fr\'echet-Urysohn.
		
\begin{thm}\label{th42}
Let $T:E_p(X)\to E_p(Y)$ be a densely defined linear continuous operator and $\mathcal P$ be either the strong $\sigma$-scatteredness or the property $(\kappa)$.
 Then $X\in\mathcal P$ implies $Y\in\mathcal P$.
\end{thm}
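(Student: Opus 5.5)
We work with the supports $\supp(l_y)$ and use only Proposition \ref{pro22} and Lemma \ref{lem23}, both valid for arbitrary Tychonoff spaces. The two properties need different arguments, so we treat them in turn.

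\emph{Strong $\sigma$-scatteredness.} Let $X=\bigcup_n X_n$ with each $X_n$ closed and scattered. We may assume the $X_n$ increase, since finite unions of closed scattered sets are closed and scattered. For finite $F\subset\mathbb N$ write $X_F=\bigcup_{n\in F}X_n$, and put
$$Y_{k,n}=\{y\in Y:|\supp(l_y)|\leq k,\ \supp(l_y)\subset X_n\}.$$
Then $Y=\bigcup_{k,n}Y_{k,n}$, because each support is finite and hence lies in some $X_n$. The set $Y_{k,n}$ is closed. The condition $|\supp(l_y)|\le k$ is closed by Claim \ref{claim1}, whose proof uses only Proposition \ref{pro22}(iii). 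If $\supp(l_y)$ meets the open set $X\setminus X_n$, then Proposition \ref{pro22}(iii) gives a neighborhood of $y$ on which every support meets $X\setminus X_n$.

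It remains to show that $Y_{k,n}$ is scattered. Take nonempty $A\subset Y_{k,n}$ and pick $y\in A$ whose support is minimal in a suitable scattered ordering. Concretely, we use the Cantor--Bendixson rank on $X_n$ and choose $y\in A$ minimizing the decreasingly sorted $k$-tuple of ranks of the points of $\supp(l_y)$, in lexicographic order. Next choose disjoint open sets $U_x\ni x$ for $x\in\supp(l_y)$ such that $U_x\cap X_n$ contains no other point of rank $\geq\operatorname{rk}(x)$. By Proposition \ref{pro22}(iii) there is a neighborhood $V$ of $y$ such that every $z\in V\cap A$ has a support meeting each $U_x$. By minimality of $y$, such a $z$ must use exactly the points $x$ themselves, and $z$ has no extra support points; otherwise the sorted rank tuple of $z$ would be strictly smaller, or it would be larger with the same ranks, contradicting $|\cdot|\le k$. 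So $\supp(l_z)=\supp(l_y)$ for all $z\in V\cap A$, and by Lemma \ref{lem23} the set $V\cap A$ is finite. Hence $A$ has an isolated point, and $Y$ is strongly $\sigma$-scattered. The main obstacle is making this minimality argument precise, in particular the treatment of extra support points of $z$. The fallback is to refine $Y_{k,n}$ further by exact support size $k$, intersected with the closed sets $Y_k$ as in Claim \ref{claim1}; this requires only countably many closed pieces of each, which may demand care.

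\emph{Property $(\kappa)$.} Let $\{K_n\}$ be disjoint finite subsets of $Y$, and set $S_n=\bigcup_{y\in K_n}\supp(l_y)$, a finite set. We want to pass to a subsequence along which the $S_n$ are pairwise disjoint. If infinitely many $S_n$ meet a fixed finite set, we use Lemma \ref{lem23} to bound the number of $y$ whose supports lie in a fixed finite set; a $\Delta$-system style thinning of the supports then gives a subsequence $\{K_{n_j}\}$ together with disjoint nonempty finite sets $S'_j$, each consisting of "new" support points of $K_{n_j}$, such that every $y\in K_{n_j}$ has a support point in $S'_j$. This extraction is the delicate step, and the paper's hint that only Lemma \ref{lem23} is used suggests that it is the intended route.

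Now apply $(\kappa)$ in $X$ to $\{S'_j\}$ to get a further subsequence with a point-finite open expansion $\{O_j\}$. We may take the $O_j$ to be disjoint from the other points in $S_j$ outside $S'_j$, shrinking the sets while keeping point-finiteness. Define
$$W_j=\{y\in Y:\supp(l_y)\cap O_j\neq\varnothing\}.$$
Each $W_j$ is open by Proposition \ref{pro22}(iii) and contains $K_{n_j}$. The family $\{W_j\}$ is point-finite: a point $y$ lies in $W_j$ only if one of its finitely many support points lies in $O_j$, and each such point belongs to only finitely many $O_j$. Thus $Y$ has property $(\kappa)$.
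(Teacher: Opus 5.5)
\paragraph{The gap.} It is in the scattered part, at the point you flag yourself. Proposition~\ref{pro22}(iii) only gives that $\supp(l_z)$ meets every $U_x$ for $z\in V$. It does not stop $z$ from having further support points, inside or outside $\bigcup_x U_x$. The bound $|\supp(l_z)|\le k$ rules these out only when $|\supp(l_y)|=k$.

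Under the natural reading of your ``sorted $k$-tuple'' (shorter supports padded from below), extra support points can make the tuple of $z$ larger. Then minimality of $y$ gives no contradiction, $\supp(l_z)\neq\supp(l_y)$, and Lemma~\ref{lem23} does not apply.

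Here is a concrete instance. Let $X=\{a\}\cup\{b_i:i\in\mathbb N\}\cup\{b_\infty\}$ with $a$ isolated and $b_i\to b_\infty$. Let $Y=\{y_0\}\cup\{y_i:i\in\mathbb N\}$ with $y_i\to y_0$. Put $Tf(y_0)=f(a)$ and $Tf(y_i)=f(a)+\frac1i f(b_i)$ for $f\in C(X)$. This $T$ is densely defined, with $\supp(l_{y_0})=\{a\}$ and $\supp(l_{y_i})=\{a,b_i\}$. Taking $X_n=X$, the point $y_0$ is your minimizer in $A=Y=Y_{2,n}$, yet it is not isolated in $A$.

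\paragraph{The repair.} The missing idea is to give priority to the size of the support.
\begin{enumerate}
\item Let $s=\max\{|\supp(l_z)|:z\in A\}$ and $A'=A\setminus Y_{s-1}$. By Claim~\ref{claim1}, $A'$ is a nonempty relatively open subset of $A$.
\item Choose $y\in A'$ whose sorted rank tuple (all tuples now have length $s$) is minimal among points of $A'$.
\item For $z\in V\cap A'$, the support has exactly $s$ points and meets the $s$ disjoint sets $U_x$. So it equals $\{w_x:x\in\supp(l_y)\}$ with $w_x\in U_x\cap X_n$. Hence $\operatorname{rk}(w_x)\le\operatorname{rk}(x)$, with equality only when $w_x=x$.
\item If $w_x\neq x$ for some $x$, the sorted tuple of $z$ is coordinatewise $\le$ that of $y$ and different from it. It is therefore lexicographically smaller, contradicting the choice of $y$.
\item Thus $\supp(l_z)=\supp(l_y)$ on $V\cap A'$. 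This set is finite by Lemma~\ref{lem23}, so $y$ is isolated in $A'$ and hence in $A$.
\end{enumerate}

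Equivalently, your fallback works without any closed decomposition of the strata. The set $Y_{k,n}\setminus Y_{k-1}$ is relatively open in $Y_{k,n}$. It maps continuously, with fibers of size $\le k$, into the scattered space $[X_n]^k$. So $Y_{k,n}$ is scattered by induction on $k$, and perfect normality is not needed.

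\paragraph{The $(\kappa)$ part.} This part is correct. Its extraction step is a routine induction rather than a $\Delta$-system argument. Given $n_1<\dots<n_{j-1}$, put $F=\bigcup_{i<j}S_{n_i}$. Lemma~\ref{lem23} makes $\{y:\supp(l_y)\subset F\}$ finite, so it meets only finitely many of the pairwise disjoint $K_n$. Take $n_j>n_{j-1}$ with $K_{n_j}$ avoiding this set, and set $S'_j=S_{n_j}\setminus F$. No shrinking of the $O_j$ is needed.

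With these repairs your proof uses exactly the two ingredients the paper cites, Proposition~\ref{pro22}(iii) and Lemma~\ref{lem23}, in place of the Krupski--Kucharski lemma. That is the paper's intended route.
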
 

\begin{proof}
This theorem was established by Krupski--Kucharski \cite{kk} in the case when $T:C_p(X)\to C_p(Y)$ is a linear continuous surjection. The Krupski--Kucharski's arguments work in our case by using 
Lemma \ref{lem23} instead of their Lemma 4.2 (actually it belongs to the mathematical folklore) and the fact that the support function $y\rightsquigarrow\supp(l_y)$ satisfies the condition from Proposition \ref{pro22}(iii). 
\end{proof}

The next result generalizes \cite[Theorem 3.16]{KKL}.

\begin{thm}\label{th43}
Let $T: E_p(X)\to E_p(Y)$ be a densely defined linear continuous operator. If $X\in\Delta_1$ then $Y\in\Delta_1$.
\end{thm}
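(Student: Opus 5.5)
Given a sequence $\{A_n\}_{n\in\mathbb N}$ of pairwise disjoint countable subsets of $Y$, I will build a point-finite open expansion $\{V_n\}$ in $Y$, following the strategy of \cite[Theorem 3.16]{KKL}. The tools are Lemma \ref{lem23} and Proposition \ref{pro22}(iii).

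\textbf{Step 1: transfer to $X$.} Put $B_n=\bigcup_{y\in A_n}\supp(l_y)$; each $B_n$ is countable by Proposition \ref{pro22}(ii). The $B_n$ need not be disjoint, so I will disjointify them. For $x\in X$, the set $P_x=\{y\in Y: x\in\supp(l_y)\}$ need not be finite. However, Lemma \ref{lem23} gives that for every finite $K\subset X$ only $\le|K|$ points $y$ satisfy $\supp(l_y)\subset K$. I will use the standard reduction from \cite{KKL}: work with the countable set $Z=\bigcup_n B_n$ and enumerate the countable set $\bigcup_n A_n$. Inductively choose, for each $y\in A_n$, a point $x(y)\in\supp(l_y)$ such that the sets $B_n'=\{x(y):y\in A_n\}$ are pairwise disjoint, except for a controlled finite-to-one error.

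\textbf{Step 2: the choice of $x(y)$ (main obstacle).} Two distinct $y$'s might be forced to pick the same point. I will fix a bijective enumeration of $\bigcup_n A_n$ and, at each stage, pick $x(y)\in\supp(l_y)$ avoiding the finitely many points already chosen, whenever this is possible. It fails only if $\supp(l_y)$ lies inside the finite set $K$ of previously chosen points. By Lemma \ref{lem23}, this happens for at most $|K|$ points of $Y$ per finite $K$. The delicate point is that such points accumulate. To handle this, I will instead assign each $y$ the whole finite set $\supp(l_y)$ and then regroup: define $C_n=\supp$-union over $A_n$ minus $\bigcup_{m<n}C_m$. A point $y$ whose support is swallowed by earlier $C_m$ is handled by Lemma \ref{lem23} applied to finite pieces; there are only finitely many such $y$ at each finite level. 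This yields disjoint countable sets $C_n\subset X$ such that every $y\in A_n$, apart from a set of $y$'s treated separately, has $\supp(l_y)\cap C_n\neq\varnothing$. I expect this bookkeeping to be the hardest part. If it cannot be done cleanly, I will follow exactly the argument in \cite{KKL} verbatim, with their support lemma replaced by Lemma \ref{lem23}.

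\textbf{Step 3: pull back.} Since $X\in\Delta_1$, there are open sets $U_n\supset C_n$ forming a point-finite family in $X$. Set $V_n=\{z\in Y:\supp(l_z)\cap U_n\neq\varnothing\}$. By Proposition \ref{pro22}(iii), each $V_n$ is open, and $A_n\subset V_n$ by Step 2, after adjoining small neighborhoods for the exceptional points. Point-finiteness follows because $\supp(l_z)$ is finite and each of its points lies in only finitely many $U_n$. Hence $z$ lies in only finitely many $V_n$, and the finitely many exceptional additions do not destroy this. Therefore $Y\in\Delta_1$.
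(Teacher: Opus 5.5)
Your Steps 1 and 3 are fine as a framework. If you had pairwise disjoint $C_n\subset X$ with $\supp(l_y)\cap C_n\neq\varnothing$ for \emph{every} $y\in A_n$, then $V_n=\{z\in Y:\supp(l_z)\cap U_n\neq\varnothing\}$ would be open by Proposition \ref{pro22}(iii), would contain $A_n$, and would be point-finite because each $\supp(l_z)$ is finite.

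The gap is Step 2, which is where all the work lies, and the argument you give there is wrong. With $C_n=B_n\setminus\bigcup_{m<n}C_m$, a point $y\in A_n$ is exceptional when $\supp(l_y)\subset\bigcup_{m<n}B_m$. Lemma \ref{lem23} only bounds the number of $y$ whose support lies in a \emph{finite} set, and $\bigcup_{m<n}B_m$ is typically infinite, so it gives no finiteness here.

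Concretely, let $X=Y=\mathbb N\times\{0,1\}$ be discrete and put $T(f)(k,0)=f(k,0)$, $T(f)(k,1)=f(k,0)+f(k,1)$; this is a linear homeomorphism of $C_p(X)$ onto $C_p(Y)$. With $A_1=\mathbb N\times\{1\}$ and $A_2=\mathbb N\times\{0\}$ you get $B_1=X$ and $C_2=\varnothing$, so all of $A_2$ is exceptional. Once infinitely many exceptional points sit in infinitely many $A_n$, ``adjoining small neighborhoods'' without losing point-finiteness is just the original problem again. The same example defeats your greedy choice: if $(k,1)$ is enumerated first and you pick $(k,0)$ for it, then $(k,0)\in A_2$ is blocked. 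Deferring to ``KKL verbatim'' is not an argument.

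The missing idea is a selection principle, and Lemma \ref{lem23} is exactly what it requires. For finite $F\subset\bigcup_nA_n$, apply Lemma \ref{lem23} to the finite set $K=\bigcup_{y\in F}\supp(l_y)$; this gives $|F|\le|K|$, which is Hall's condition. By M.~Hall's theorem for families of finite sets (for countable families, via K\"onig's lemma), there is an injective choice $x(y)\in\supp(l_y)$. Then the sets $C_n=\{x(y):y\in A_n\}$ are pairwise disjoint, and your Step 3 goes through with no exceptional points.

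The paper instead avoids disjointification altogether:
\begin{enumerate}
\item[(1)] It uses the characterization of $\Delta_1$ from \cite{KKL} by sequences of distinct points $y_n$.
\item[(2)] It expands each \emph{point} of $A=\bigcup_n\supp(l_{y_n})$ by its own $U_x$, taken from a point-finite family indexed by $A$, so no collisions can arise.
\item[(3)] It puts $V_n=\bigcap_{x\in\supp(l_{y_n})}\{y:l_y(f_{x,n})\neq0\}$, where $f_{x,n}\in E(X)$ vanishes off $U_x$ and $l_{y_n}(f_{x,n})\neq0$.
\item[(4)] It notes that $y\in V_n$ forces $\supp(l_{y_n})\subset P_y=\{x\in A:\supp(l_y)\cap U_x\neq\varnothing\}$, which is a finite set.
\item[(5)] It applies Lemma \ref{lem23} to $P_y$ to conclude that only finitely many $n$ qualify.
\end{enumerate}
So the paper uses Lemma \ref{lem23} at the end, on a finite set, rather than at the start on infinite unions.
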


\begin{proof}
It was shown in \cite{KKL} that $X\in\Delta_1$ is equivalent to the following property: for every sequence of points $\{x_n\}_{n\in\mathbb N} \subset X$ there is a point-finite open family $\{V_n\}_{n\in \mathbb N}$ in $X$ with $x_n\in V_n$ for all $n\in \mathbb N$.
We adopt the proof of \cite[Theorem 3.16]{KKL}. Let $\{y_n\}_{n\in\mathbb N}$ be a sequence in $Y$ and $A=\bigcup_{n\in\mathbb N}\supp(l_{y_n})$. Since $A$ is countable and $X\in\Delta_1$, there is a point-finite open expansion $\{U_x: x\in A\}$ of $A$. Then, according to the definition of $\supp(l_y)$, for every $n$ and $x\in\supp(l_y)$ there is $f_{x,n}\in E(X)$ such that $f_{x,n}(X\backslash U_x)=0$ and 
$l_{y_n}(f_{x,n})\neq 0$. Define 
$$\displaystyle V_n=\bigcap_{x\in\supp(l_{y_n})}\{y\in Y:l_{y}(f_{x,n})\neq 0\}.$$
 Each $V_n$ is open and $y_n\in V_n$. Let 
$$P_y=\{x\in A:\supp(l_y)\cap U_x\neq\varnothing\}, y\in Y.$$
 Since $\supp(l_y)$ is finite and the family $\{U_x: x\in A$ is point-finite, every $P_y$ is a finite set. Moreover,
$y\in V_n$ implies $\supp(l_{y_n})\subset P_y$. Indeed, if there is $x\in\supp(l_{y_n})\backslash P_y$, then $\supp(l_y)\cap U_x=\varnothing$. So, 
$f_{x,n}(\supp(l_y))=0$ which yields $l_y(f_{x,n})=0$. That contradicts $y\in V_n$. Finally, by Lemma \ref{lem23}, there are only finitely many $n$ such that 
$\supp(l_{y_n})\subset P_y$. Therefore, $\{V_n\}_{n\in\mathbb N}$ is a point-finite expansion of $\{y_n\}_{n\in\mathbb N}$.
\end{proof}

Now we turn our attention to the class of spaces $\Delta \subset \Delta_1$.
Recall that $X$ belongs to the class $\Delta$ if any sequence $\{X_n\}_{n\in \mathbb N}$ of disjoint subsets of $X$ has a point-finite open expansion in $X$ \cite{KL}.
It has been shown in \cite[Theorem 3.1]{KL2} that if there exists a linear continuous surjection $T: C_p(X) \to C_p(Y)$, where $X$ and $Y$ are arbitrary Tychonoff spaces, 
then $X\in \Delta$ implies $Y\in \Delta$. We observe that the actual proof of \cite[Theorem 3.1]{KL2} provides the same conclusion assuming that the range of 
operator $T$ is dense in $C_p(Y)$ (see \cite[Theorem 17]{FK}).

\begin{prob}\label{prob2} 
Let $T: E_p(X)\to E_p(Y)$ be a densely defined linear continuous operator. Is it true that $X\in\Delta$ implies $Y\in\Delta$?
\end{prob}

We give a partial positive answer to Problem \ref{prob2} under slightly stronger assumptions on the domain $E_p(X)$.
We say that $E(X)$ is {\em strongly correct} if for every $x\in X$ and its neighborhood $U$ there is $f\in E(X)$ with 
$f(X): X \to [0,1]$, $f(x)=1$ and $f(X\backslash U)=0$. 

\begin{thm}\label{th-D}
Let $T: E_p(X)\to E_p(Y)$ be a linear continuous operator such that $E(X)$ is a strongly correct linear subspace of $D(X)$ and $E_p(Y)$ is dense in $D_p(Y)$.
If $X\in\Delta$ then $Y\in\Delta$.
\end{thm}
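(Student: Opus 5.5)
Since the theorem generalizes \cite[Theorem 3.1]{KL2}, I would adapt that argument, with Lemma \ref{lem23} and Proposition \ref{pro22} replacing the facts about $C_p(X)$ used there. First note that a strongly correct $E(X)$ is correct, so $T$ is densely defined and Proposition \ref{pro22} and Lemma \ref{lem23} apply. Let $\{Y_n\}_{n\in\mathbb N}$ be disjoint subsets of $Y$; the task is to find open $V_n\supset Y_n$ forming a point-finite family. Set $A_n=\bigcup_{y\in Y_n}\supp(l_y)\subset X$. These sets need not be disjoint, so I would use the standard device from \cite{KL2}: since $X\in\Delta$, any countable family of subsets of $X$ (even non-disjoint) whose members can be split into disjoint pieces has a point-finite expansion piecewise. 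Concretely, I would use the characterization that $X\in\Delta$ iff for every disjoint sequence $\{X_n\}$ there are open $U_n\supset X_n$ that are point-finite. I would then pass from the supports to a disjoint family, for instance $B_n=\{x\in X: x\in A_n\}$ indexed by finite sets $\{n: x\in A_n\}$, exactly as in \cite{KL2}.

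The main step is building $V_n$ from a point-finite open family $\{U_n\}$ in $X$ with $A_n\subset U_n$. For $y\in Y_n$ and $x\in\supp(l_y)$, strong correctness gives $f\in E(X)$ with $0\le f\le1$, $f(x)=1$, $f=0$ off $U_n$. Since the functions are nonnegative and equal to $1$ at the support points, I would choose them so that $l_y(f_{x})\neq 0$ (possible by Proposition \ref{pro22}(i): shrinking $U_n$ around $x$ so that no other support point lies in it gives $l_y(f_x)=\lambda_x(y)\neq0$). Then set $W_y=\bigcap_{x\in\supp(l_y)}\{z: T(f_x)(z)\neq0\}$, an open neighborhood of $y$, and let $V_n=\bigcup_{y\in Y_n}W_y$. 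If $z\in W_y$, then $T(f_x)(z)\neq0$ forces $\supp(l_z)\cap U_n\neq\varnothing$ as in Proposition \ref{pro22}(iii). Hence $z\in V_n$ implies that $\supp(l_z)$ meets $U_n$, and since $\supp(l_z)$ is finite and $\{U_n\}$ is point-finite, $z$ lies in only finitely many $V_n$.

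The obstacle is that this argument uses only ``$\supp(l_z)$ meets $U_n$''. That suffices provided the $U_n$ are genuinely point-finite expansions of the $A_n$, which requires the $A_n$ to be disjoint. They are not disjoint in general, since different $y$ may share support points. This is where the argument of \cite{KL2} and Lemma \ref{lem23} enter. I would decompose $Y$ by the finite sets $F\subset X$ to which supports are compared. Fix $z$ with support $S=\supp(l_z)$. I would arrange the expansion so that $z\in V_n$ forces $\supp(l_y)\subset$ some finite set determined by $z$ for the witnessing $y\in Y_n$, as in the proof of Theorem \ref{th43}; Lemma \ref{lem23} then bounds the number of such $y$, hence of such $n$. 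Making this work with uncountable $Y_n$ is exactly where strong correctness is needed: with $f_x\in[0,1]$, a single function per point of $X$ controls all $y$ sharing $x$. I would try to take the expansion in $X$ of the disjoint sets $X_n=A_n\setminus\bigcup_{m<n}A_m$, and I expect the verification of point-finiteness in this step to be the main difficulty.
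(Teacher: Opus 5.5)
\textbf{There is a genuine gap.} The argument in your second paragraph is correct \emph{provided} there is a point-finite open family $\{U_n\}$ with $A_n=\bigcup_{y\in Y_n}\supp(l_y)\subset U_n$. Such a family need not exist.

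Take $X=Y=\{0,1,2,\dots\}$ discrete, $E=C$, and $T(f)(0)=f(0)$, $T(f)(n)=f(0)+f(n)$ for $n\geq 1$. This is a linear homeomorphism of $\R^{\omega}$. For the disjoint sets $Y_n=\{n\}$ you get $\supp(l_n)=\{0,n\}$, so $0$ lies in every member of any expansion of $\{A_n\}$. The same example shows that the sets $\{n:x\in A_n\}$ in your first paragraph need not be finite; also, $B_n$ as you wrote it is just $A_n$.

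Your fallback $X_n=A_n\setminus\bigcup_{m<n}A_m$ does not repair this. For a witness $y\in Y_n$, all of $\supp(l_y)$ may lie in $A_1\cup\dots\cup A_{n-1}$. Then knowing that $\supp(l_z)$ meets the expanded pieces containing the points of $\supp(l_y)$ gives no bound on $n$. The device of Theorem \ref{th43} does not transfer either. There you need a finite $P_z$ containing $\supp(l_y)$ for every witness $y$, and then apply Lemma \ref{lem23}. That $P_z$ came from a point-finite expansion of the singletons of a \emph{countable} set, which is not available when $\bigcup_n A_n$ is uncountable.

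The role you give to strong correctness also fails. A nonnegative $f_x$ with $f_x(x)=1$ can still have $l_y(f_x)=\sum_i\lambda_i(y)f_x(x_i(y))=0$, because the $\lambda_i(y)$ may have opposite signs. So the proposal stops exactly where the proof has to happen.

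\textbf{How the paper proceeds.} The paper never builds expansions in $Y$. It uses the K\c akol--Leiderman characterization \cite{KL}: $Y\in\Delta$ iff every $g\in\R^Y$ lies in the $\R^Y$-closure of a bounded subset of $C_p(Y)$.
\begin{enumerate}
\item[(1)] Extend $T$ to a linear continuous surjection $\widehat T:\R^X\to\R^Y$ by \cite[Lemma 3.2]{KL2}; this is where density of the range enters.
\item[(2)] Pick $f$ with $\widehat T(f)=g$. Apply $X\in\Delta$ to the level sets $\{x:n-1\le|f(x)|<n\}$ to get $F(x)=\max\{n:x\in U_n\}>|f|$.
\item[(3)] Set $B=\{h\in E(X):|h|\le F\}$. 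Strong correctness gives, for each finite $K$, some $h=\sum_{x}h_x\in B$ with $h\restriction_K=f\restriction_K$. The point is that $|h_x|\le|f(x)|$ \emph{everywhere}, so $|h|\le F$ on all of $X$.
\item[(4)] Then $T(B)$ is bounded and $g\in\cl_{\R^Y}(T(B))$.
\end{enumerate}

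\textbf{How to rescue your direct approach.} Use only \emph{one} support point per $y$, chosen injectively. Lemma \ref{lem23} gives $|J|\le|\bigcup_{y\in J}\supp(l_y)|$ for every finite $J\subset Y$, which is Hall's condition. By M.~Hall's theorem for families of finite sets, there is an injective $\sigma:Y\to X$ with $\sigma(y)\in\supp(l_y)$. Then:
\begin{enumerate}
\item[(i)] The sets $\sigma(Y_n)$ are disjoint, so $X\in\Delta$ gives a point-finite open expansion $\{U_n\}$ of them.
\item[(ii)] For $y\in Y_n$, the definition of $\supp(l_y)$ gives $f_y\in E(X)$ vanishing off $U_n$ with $l_y(f_y)\neq 0$.
\item[(iii)] Put $V_n=\bigcup_{y\in Y_n}\{z:T(f_y)(z)\neq 0\}$.
\end{enumerate}
Your finiteness argument then goes through verbatim. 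This variant uses only correctness of $E(X)$, via Proposition \ref{pro22} and Lemma \ref{lem23}. It would therefore also answer Problem \ref{prob2} affirmatively.
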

\begin{proof}
It follows from \cite[Theorems 1.1 and 2.1]{KL} that $Y$ is a $\Delta$-space if and only if 
$C_{p}(Y)$ is a {\em large subspace} of $\R^{Y}$, i.e., for every mapping $g \in \R^Y$ there is a bounded set $A \subset C_p(Y)$ such that $g\in \cl_{\R^Y}(A)$.
By a {\em bounded set} $A$ in a topological vector space $V$ we understand any set which is absorbed by every $0$-neighborhood $V$, i.e., $A \subset \lambda V$ for some scalar $\lambda$.

The linear continuous surjective operator $T: E_p(X)\to E_p(Y)$ has a domain which is dense in $C_p(X)$ and a range which is dense in $C_p(Y)$.
Denote by  $\widehat{T}:\mathbb{R}^{X}\to \mathbb{R}^{Y}$ the linear continuous surjective extension of $T$ which is supplied by \cite[Lemma 3.2]{KL2}. 
Take arbitrary $g\in\mathbb{R}^{Y}$. There exists $f\in\mathbb{R}^{X}$ with $\widehat{T}(f)=g$. We will construct a bounded set $B\subset E_{p}(X)$ 
such that  $f\in \cl_{\R^X}(B)$. 

If  there exists a constant $r > 0$ such that $\sup\{|f(x)|: x \in X\} < r$,
 then we take $B=\{h \in E(X): \sup\{|h(x)|: x \in X\} < r \}$. It will be clear from the next part of the proof that $B$ is as required.

Let $f \in \R^X$ be unbounded. Denote by 
$$X_n =\{x \in X: n-1 \leq |f(x)| < n\}{~}\mbox{for each}{~}n \in \N.$$
Note that some sets $X_{n}$ might happen to be empty,
but the collection $\{X_{n}: n \in \N\}$ is a partition of $X$ with countably many nonempty sets $X_n$.
Define $\varphi: X \rightarrow \N$ by the rule: if $X_n \neq \emptyset$ then $\varphi(x) = n$ for every $x\in X_n$.
So, $|f| < \varphi$. 
By our assumption, $X\in\Delta$, hence there exists a point-finite open expansion $\{U_{n}:n\in\N\}$ of the partition $\{X_{n}:n\in\N\}$. 
 Define $F:X\rightarrow\N$ by $F(x)=\max\{n: x\in U_{n}\}$. Obviously, $f < F$. Finally, we define $B = \{h\in E(X): |h|\leq F\}$.
Then $B$ is a bounded set in $C_p(X)$. We claim that $f \in cl_{\R^X}(B)$, because for every finite subset $K \subset X$ there is a function $h \in B$
such that $f\restriction_{K} = h\restriction_{K}$.

 Indeed, given a finite subset $K \subset X$, let $\{V_{x}:x\in K\}$ be the family of pairwise disjoint open sets
 such that $x\in V_{x}\subset U_{\varphi(x)}$ and $V_x$ meets finitely many sets $U_n$ for every $x\in K$. Denote by $K^{\prime}$ the set of those $x\in K$ such that $f(x)\neq 0$.
 Since $E(X)$ is a strongly correct linear subspace of $D(X)$, for each $x\in K^{\prime}$, we can fix a continuous function $h_{x} \in E(X)$ such that
 $h_{x}:X\rightarrow [0,\varphi(x)]$, $h_{x}(x)= f(x)$ and $h_{x}$ is equal to the constant value $0$ on the closed set $X \setminus V_{x}$.
 One can verify that $h=\Sigma_{x\in K^{\prime}}h_{x} \in B$ is as required.
 
Finally, we define $A = T(B)$. It is easy to see that $A \subset E_p(Y) \subset C_p(Y)$ is bounded and
$f\in \cl_{\R^Y}(A)$ which means that $Y$ is a $\Delta$-space.
\end{proof}

Our last result provides a partial solution of \cite[Problem 3.13]{KL2}. 

\begin{thm}\label{unif} Let $X$ and $Y$ be Tychonoff spaces.
Assume that $C_p(X)$ is uniformly homeomorphic to $C_p(Y)$.
Then $X\in \Delta$ if and only if $Y\in \Delta$.
\end{thm}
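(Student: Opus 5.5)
By symmetry (apply the argument to $\Phi^{-1}$) it suffices to prove that $X\in\Delta$ implies $Y\in\Delta$, where $\Phi:C_p(X)\to C_p(Y)$ is a uniform homeomorphism. Lemma \ref{lem23} and Theorem \ref{th-D} use linearity, so I would replace the linear supports $\supp(l_y)$ with the finite ``supports'' that Gul'ko's technique gives for uniformly continuous maps of function spaces. The property I want to transfer is the $\Delta$ criterion in the form of point-finite open expansions of disjoint sequences. The plan is to copy the combinatorial scheme of the proof of Theorem \ref{th43}, with these Gul'ko supports in place of $\supp(l_y)$.

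\textbf{Step 1: supports.} Uniform continuity of $\Phi$ at the basic entourage $\{|g_1(y)-g_2(y)|<1\}$ gives, for every $y\in Y$, a finite set $K(y)\subset X$ and $n(y)\in\mathbb N$ with the following property: $|f_1-f_2|<1/n(y)$ on $K(y)$ implies $|\Phi(f_1)(y)-\Phi(f_2)(y)|<1$. Iterating along a chain of small steps upgrades this to: $|f_1-f_2|<m/n(y)$ on $K(y)$ implies $|\Phi(f_1)(y)-\Phi(f_2)(y)|<m$.

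I would choose $K(y)$ minimal with respect to a fixed well-order of finite sets. Then I would partition $Y$ into the sets
$$Y_{k,n}=\{y: |K(y)|\le k,\ n(y)\le n\}.$$
Unlike Claim \ref{claim1}, I do not expect these sets to be closed. This is harmless here, because point-finite expansions of sequences only require work on countably many points at a time.

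\textbf{Step 2: the main combinatorial step.} Take a disjoint sequence $\{A_j\}$ in $Y$; by Theorem \ref{th43}-style reductions it is enough to handle a sequence of points $\{y_j\}$, or a sequence of disjoint sets, according to the definition of $\Delta$. Put $B_j=\bigcup_{y\in A_j}K(y)$. After refining to make the $B_j$ disjoint, use $X\in\Delta$ to get a point-finite open expansion $\{U_j\}$ of $\{B_j\}$.

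Then, as in the proof of Theorem \ref{th-D}, build the function $F=\max\{j:x\in U_j\}$ on $X$. Use it to choose $f_j\in C(X)$ that are $0$ off $U_j$ and large on $B_j$, and define open sets
$$V_j=\{y: |\Phi(f_j)(y)-\Phi(0)(y)|>1\}.$$
The step-1 estimate gives $A_j\subset V_j$. Point-finiteness of $\{V_j\}$ should follow because a fixed $y$ has a finite $K(y)$, which meets only finitely many $U_j$. For the remaining $j$, $f_j$ vanishes on $K(y)$, so $y\notin V_j$.

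\textbf{Main obstacle.} The trouble is the choice of the $f_j$ and the refinement of the $B_j$. For $y\in A_j$, the function $f_j$ must differ from $0$ on $K(y)$ by enough (scaled by $n(y)$) to force a change of $\Phi$ at $y$. Uniform continuity bounds changes from above, not below, so I must apply the step-1 estimate to $\Phi^{-1}$ instead. That requires coherent supports for $\Phi^{-1}$ at points $x\in K(y)$, in the sense of Gul'ko's ``mutual support'' lemma: $x$ must lie in the support of some point of the $\Phi^{-1}$-side.

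I would first try to handle this by splitting each $A_j$ into the countably many pieces $A_j\cap Y_{k,n}$ and using the large-subspace characterization (\cite[Theorems 1.1 and 2.1]{KL}), which absorbs scaling by bounded sets. I would then diagonalize over $(k,n)$, using that $\Delta$ is stable under the needed countable decompositions.
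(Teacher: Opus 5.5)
Your proposal has a genuine gap, and it is the one you flag yourself: nothing in it yields $A_j\subset V_j$. The point-finiteness half of Step 2 is fine. The trouble is that the Step 1 estimate is one-sided. It says that if $f_j$ is small on $K(y)$, then $\Phi(f_j)(y)$ is close to $\Phi(0)(y)$. It does not say that making $f_j$ large on $K(y)\subset B_j$ moves $\Phi(f_j)(y)$ at all.

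This fails already for linear maps. Let $x_0$ be an isolated point of $X=Y$ and put $\Phi(f)(y)=f(y)-f(x_0)$ for $y\neq x_0$ and $\Phi(f)(x_0)=f(x_0)$. Then $K(y)=\{y,x_0\}$, and any $f_j$ equal to $N$ on $K(y)$ gives $\Phi(f_j)(y)=\Phi(0)(y)$.

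The proof of Theorem~\ref{th43} escapes this only because each $V_n$ is built around a single point $y_n$, with functions $f_{x,n}$ chosen so that $l_{y_n}(f_{x,n})\neq 0$. That is why that argument yields $\Delta_1$. It is also why ``it is enough to handle a sequence of points'' is false for $\Delta$: there the $A_j$ are arbitrary disjoint sets, and one function must work for every $y\in A_j$ at once.

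Your proposed switch to $\Phi^{-1}$ does not close the gap. The $\Phi^{-1}$-estimate at a point $x$ where $f_j$ is large only tells you that $\Phi(f_j)$ and $\Phi(0)$ differ somewhere on the $\Phi^{-1}$-support of $x$, and that set need not meet $A_j$.

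Two further steps are unsupported:
\begin{itemize}
\item Refining overlapping $B_j$ into disjoint sets destroys $K(y)\subset U_j$ for the points you remove.
\item The closing diagonalization over the $Y_{k,n}$ would first require $Y_{k,n}\in\Delta$, which is the original problem. Moreover, the available stability results for $\Delta$-type classes concern countable unions of \emph{closed} subspaces, and you expect the $Y_{k,n}$ not to be closed.
\end{itemize}
For comparison, even for densely defined linear operators, where Proposition~\ref{pro22} and Lemma~\ref{lem23} are available, the paper does not push a support argument through for $\Delta$ (Problem~\ref{prob2}).

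The missing idea is to drop supports entirely and use the large-subspace characterization globally, as the paper does.
\begin{enumerate}
\item Since $\mathbb R^X$ and $\mathbb R^Y$ are the completions of $C_p(X)$ and $C_p(Y)$, $\Phi$ extends to a uniform homeomorphism $\widehat\Phi:\mathbb R^X\to\mathbb R^Y$. The paper cites Arbit for this; it is also the standard extension of uniformly continuous maps into complete spaces, applied to $\Phi$ and $\Phi^{-1}$.
\item Given $g\in\mathbb R^Y$, put $f=\widehat\Phi^{-1}(g)$. Since $X\in\Delta$, $C_p(X)$ is large in $\mathbb R^X$, so $f$ lies in the closure of some pointwise bounded $B\subset C_p(X)$.
\item By continuity of $\widehat\Phi$, $g$ lies in the closure of $\Phi(B)$.
\item $\Phi(B)$ is bounded by exactly your Step 1 chain estimate. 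For fixed $y$ and $h_0\in B$ we have $\sup_{h\in B}\max_{K(y)}|h-h_0|<\infty$, hence $\sup_{h\in B}|\Phi(h)(y)-\Phi(h_0)(y)|<\infty$.
\end{enumerate}
Therefore $C_p(Y)$ is large in $\mathbb R^Y$, that is, $Y\in\Delta$, and no combinatorics of supports is needed.
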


\begin{proof}
The main idea repeats the previous proof. Let $T: C_p(X)\to C_p(Y)$ be a uniform homeomorphism. According to \cite[Theorem 3.1]{Arbit}, $T$ admits a surjective extension $\widehat{T}: \R^X \to \R^Y$ which remains a uniform homeomorphism. 

Take arbitrary $g\in\R^{Y}$. There exists $f\in\R^{X}$ with $\widehat{T}(f)=g$. Then there exists a bounded set $B\subset C_{p}(X)$ 
such that  $f\in \cl_{\R^X}(B)$. We define $A = T(B)$. It is easy to see that $A$ is bounded and 
$g\in \cl_{\R^Y}(A)$ which means that $C_{p}(Y)$ is a large subspace of $\R^{Y}$, equivalently, $Y$ is a $\Delta$-space. 
\end{proof}

\begin{rem}\label{r47} {\em The proof of Theorem \ref{unif} does not work if one assumes only that $T: C_p(X)\to C_p(Y)$ is a uniformly continuous surjective mapping.
The reason is the following: an extension to a uniformly continuous surjective mapping $\widehat{T}: \R^X \to \R^Y$ is not available.
For instance, there is a $1$-Lipschitz mapping from the plane $\R^2$ onto the plane without one point. }
\end{rem}

\section{Appendix}
It has been noted before that Theorem \ref{theorem-main} extends \cite[Theorem 1.6]{elv}, assuming that both spaces $X$ and $Y$ are metrizable.
Let us justify this claim. First, we need to refine some formulations appearing in \cite{elv}. 
In that paper, we considered properties $\mathcal P$ of normal spaces satisfying the following conditions:
\begin{itemize}
\item[(a)] If $X\in\mathcal P$ and $F\subset X$ is closed, then $F\in\mathcal P$;
\item[(b)] $\mathcal P$ is closed under finite products;
\item[(c)] If $X$ is a countable union of closed subsets each having the property $\mathcal P$, then $X\in\mathcal P$;
\item [(d)] If $f:X\to Y$ is a closed map with finite fibers, where $Y$ is a metrizable space with $Y\in\mathcal P$, then $X\in\mathcal P$.
\end{itemize}

Unfortunately, in the last paragraph of the proofs of \cite[Theorem 1.4]{elv} and \cite[Theorem 1.6]{elv}, 
we used the following incorrect statement: {\it If $X$ is a metrizable space and $q$ is any integer, then $[X]^q$ is homeomorphic
 to the set $W_q=\{(x_1,x_2,..,x_q)\in X^k: x_i\neq x_j{~}\hbox{for}{~}i\neq j\} \subset X^q$}.

Analysing our proofs from \cite{elv}, we observed that both \cite[Theorem 1.4]{elv} and \cite[Theorem 1.6]{elv} remain true if we make the following changes:
\begin{enumerate}
\item[1.] Replace item $(c)$ of the property $\mathcal P$ above with a stronger requirement $(c^{\prime})$: If $X$ is a union of a $\sigma$-locally finite family $\mathcal A$ of closed sets in $X$ such that
   $A\in\mathcal P$ for all $A\in\mathcal A$, then $X\in\mathcal P$ (exactly as item $(b)$ of the property $\mathcal P$ introduced in the current paper);
\item[2.] In order to prove that $[X]^q$ has the property $\mathcal P$, we can use current Lemma 2.9 instead of the aforementioned incorrect statement.
\end{enumerate}

It follows that, assuming both spaces $X$ and $Y$ are metrizable, \cite[Theorem 1.6]{elv} is indeed a particular case of Theorem \ref{theorem-main}. 

We should mention that the proofs of \cite[Theorem 4.8]{elv} and \cite[Theorem 4.9]{elv} also remain true because the properties of being a strongly $\sigma$-scattered space and
being a $\Delta_1$-space satisfy all required items $(a), (b), (c^{\prime}), (d)$, see \cite {KKL}.


\begin{thebibliography}{00}

\bibitem{ag}
D.~Addis and J.~Gresham, \textit{A class of infinite-dimensional spaces, Part I: Dimension theory and Alexandroff's Theorem}, Fund. Math. \textbf{101} (1978), 195--205.

\bibitem{Arbit} A. V. Arbit, 
\textit{The Lindel\"of number greater than continuum is $u$-invariant}, Serdica Math. J. \textbf{37} (2011), 143--162.

\bibitem{ar1} A.~Arkhangel'skii, 
\textit{Topological Function Spaces}, Kluwer Academic Publishers, Dordrecht, 1992.

\bibitem{bd} J.~Baars and J.~de Groot,
 \textit{On topological and linear equivalence of certain function spaces}, CWI tract 86, Center for for Mathematics and Computer Science, Amsterdam, 1992.

\bibitem{Pelant} J.~Baars, J.~de Groot and J. Pelant,
\textit{Function spaces of completely metrizable spaces}, Trans. Amer. Math. Soc. \textbf{340} (1993), 871--883.

\bibitem{en} R.~Engelking, 
\textit{Theory of dimensions, finite and infinite}, Sigma Series in Pure Mathematics, 10. Heldermann Verlag, 1995.

\bibitem{en1} R.~Engelking, 
\textit{General Topology},
Heldermann Verlag, Berlin, 1989.

\bibitem{elv} A.~Eysen, A.~Leiderman and V.~Valov,
 \textit{Linear and uniformly continuous surjections between $C_p$-spaces over metrizable spaces}, 
Math. Slovaca \textbf{75} (2025), 669--678.

\bibitem{ev}
A.~Eysen and V.~Valov, \textit{On uniformly continuous surjections between function spaces}, Canad. J. Math. (published online Sept 25, 2025),\\
 https://doi.org/10.4153/S0008414X25101636

\bibitem{FK} J. C.~Ferrando, J. K\c akol,
\textit{Distinguished dense $C_p$-subspaces},
Topology Appl. \textbf{377} (2026) 109647.

\bibitem{gu}
S.~Gul'ko, \textit{On uniform homeomorphisms of spaces of continuous functions}, Trudy Mat. Inst. Steklov, \textbf{193} (1992), 82--88 (in Russian):
 English translation: Proc. Steklov Inst. Math. \textbf{193} (1992), 87--93. 
 
\bibitem{gv}
V.~Gutev and V.~Valov, \textit{Continuous selections and $C$-spaces}, Proc. Amer. Math. Soc.  \textbf{130} (2002), 233--242.

\bibitem{hy}
Y.~Hattori and K.~Yamada, \textit{Closed pre-images of $C$-spaces}, Math. Japonica \textbf{34} (1989), 555--561.

\bibitem{KL} J. K\c akol and A. Leiderman,
\textit{A characterization of $X$ for which spaces $C_p(X)$ are distinguished and its applications},
Proc. Amer. Math. Soc., series B,  \textbf{8} (2021), 86--99.

\bibitem{KL2} J. K\c akol and A. Leiderman,
\textit{Basic properties of $X$ for which the space $C_p(X)$ is distinguished},
 Proc. Amer. Math. Soc., series B,  \textbf{8} (2021), 267--280.

\bibitem{KKL} J. K\c akol, O. Kurka and A. Leiderman,  
 \textit{Some classes of topological spaces extending the class of $\Delta$-spaces},
 Proc. Amer. Math. Soc. \textbf{152} (2024), 883--899.

\bibitem{KawL} K.~Kawamura and A.~Leiderman,
 \textit{Linear continuous surjections of $C_p$-spaces over compacta},
 Topology Appl. \textbf{227} (2017), 135--145. 

\bibitem{kk} M.~Krupski and K.~Kucharski,
 \textit{On the $A$-invariance of the hereditary Baire property and related results}, preprint,\\
arxiv.org/abs/2608.11115

\bibitem{lmp} A.~Leiderman, S.~Morris and V.~Pestov,
 \textit{The free abelian topological group and the free locally convex space on the unit interval},
 J. London Math. Soc. \textbf{56} (1997), 529--538.

\bibitem{mar} W. Marciszewski,
 \textit{Function spaces}, 
in M. Hu\v sek and J. van Mill eds., Recent progress in general topology II, Elseiver (2002), 345--369.

\bibitem{mi}
E.~Michael, \textit{Topologies on spaces of subsets}, Trans. Amer. Math. Soc. {\bf 71} (1951), 152--182.

\bibitem{vanMill} J. van Mill,
\textit{The Infinite-Dimensional Topology of Function Spaces}, North-Holland Mathematical Library 64, North-Holland, Amsterdam, 2001.

\bibitem{nb} L. Narici and E. Beckenstein, 
\textit{Topological Vector Spaces, Second Edition}, CRC Press, Taylor and Francis Group, 2010.

\bibitem{p} V.~Pestov,
 \textit{The coincidence of the dimension $\dim$ of $l$-equivalent topological spaces}, Soviet Math. Dokl.
 \textbf{26} (1982), 380--383. 

\bibitem{sa} M.~Sakai,
 \textit{Two properties of $C_p(X)$ weaker than the Fr\'echet-Urysohn property}, Topology Appl. \textbf{153} (2006), 2795--2804.

\bibitem{tk} V. V.~Tkachuk,
 \textit{$C_p$-theory problem book. Topological and function spaces. Problem Books in Mathematics}, Springer, Berlin, New York, 2011.

\bibitem{tk2} V. V. ~Tkachuk,
 \textit{$C_p$-theory problem book. Functional equivalencies. Problem Books in Mathematics}, Springer, Berlin, New York, 2016.

\bibitem{Usp} V. Uspenskii,
\textit{A characterization of compactness in terms of the uniform structure in a function space},
Russian Math. Surveys, \textbf{37(4)} (1982), 143--144.

\bibitem{kz} K.~Zakrzewski,
 \textit{Function spaces on Corson-like compacta}, Results Math. \textbf{80:75} (2025), 21 pp.


\end{thebibliography}
\end{document}